\documentclass[a4paper,11pt,reqno]{amsart} 
\usepackage{amssymb,amsthm,amsmath}
\usepackage{amsmath}
 \usepackage{enumerate}
\usepackage{ifthen}
\usepackage{graphicx}
\usepackage{cite}
\usepackage{tikz} 
\usetikzlibrary{arrows.meta}
\usepackage{comment}
\usepackage{amsmath,amscd,amssymb}
\usepackage{latexsym}
\usepackage[colorlinks,citecolor=blue,pagebackref,hypertexnames=false]{hyperref}

\numberwithin{equation}{section}
\allowdisplaybreaks[2]
\theoremstyle{plain}
\newtheorem{theorem}{Theorem}[section]

\newtheorem{lemma}[theorem]{Lemma}

\newtheorem{proposition}[theorem]{Proposition}

\theoremstyle{definition}
\newtheorem{definition}[theorem]{Definition}

\theoremstyle{remark}
\newtheorem{remark}[theorem]{Remark}

\newtheorem{case[theorem]}{Case}

\def\norm#1.#2.{\lVert#1\rVert_{#2}}

\def\Hd{\mathcal{H}_{k,l}}

\title[Phase-Space Analysis of Anharmonic and Ornstein-Uhlenbeck Semigroups]{Phase-Space Analysis of generalised Fractional Anharmonic and Ornstein-Uhlenbeck Semigroups on  Weighted Modulation Spaces}

\author{Aparajita Dasgupta}
\author{Uttam Kumar Dolai}

 \address{\endgraf Department of Mathematics
 Indian Institute of Technology, Delhi, Hauz Khas
New Delhi-110016
 India}
\email{adasgupta@maths.iitd.ac.in}
\address{\endgraf Department of Mathematics
Indian Institute of Technology, Delhi, Hauz Khas
New Delhi-110016
India}
 \email{maz248176@maths.iitd.ac.in}

 \keywords{Fractional generalised anharmonic oscillator, weighted modulation space, heat semigroup, Orstein-Uhlenbeck operator, global well posedness}
 \subjclass[2020]{35L80, 47G30, 35L40, 35A27}

\date{\today}
\begin{document}

\begin{abstract}

We develop a phase-space framework for fractional generalised anharmonic oscillators and their heat semigroups on weighted modulation spaces. We consider operators of the form
\[
\Hd=(-\Delta)^{l}+V(x),
\]
where $V$ is a strictly positive homogeneous potential of polynomial growth of order $2k$. By studying a H\"ormander metric adapted to the quasi-homogeneous symbol $|\xi|^{2l}+V(x)$, as in \cite{MR4299820, MR4944933} we place $\Hd$ and its fractional powers within the Weyl-H\"ormander calculus. In this setting, we show that the fractional operators $\Hd^{\beta}$, $\beta>0$, are globally hypoelliptic pseudodifferential operators and derive refined symbol estimates for the heat semigroup $e^{-t\Hd^{\beta}}$.
These estimates yield boundedness and smoothing properties of the fractional anharmonic heat semigroup on weighted modulation spaces $\mathcal{M}^{p,q}_{s}$, for the full range $0<p,q\leq\infty$ and suitable range of $s$. As applications, we establish global well-posedness of nonlinear heat equations associated with $\Hd^{\beta}$, including both homogenous power and spatially inhomogenous nonlinearities. Finally, we introduce Gaussian modulation spaces adapted to the Ornstein-Uhlenbeck operator and prove continuity of the corresponding semigroup, providing a phase-space perspective complementary to classical Gaussian harmonic analysis.
\end{abstract}

\maketitle
\allowdisplaybreaks \vspace{-.5cm}
\tableofcontents 	

\section{Introduction}

The operators
\[
\mathcal{A}_{k,l}=(-\Delta)^{l}+|x|^{2k},
\]
commonly referred to as anharmonic oscillators, arise naturally in quantum mechanical models describing systems in which purely harmonic behavior is no longer an adequate approximation. Such operators appear, for instance, in the analysis of vibrational dynamics of molecular systems where quadratic confinement fails to capture essential features of the energy spectrum. In these situations, higher-order polynomial potentials provide a more faithful representation of the underlying quantum dynamics, leading to models with richer spectral and dynamical properties.

From a mathematical standpoint, the study of anharmonic oscillators has a long and well-developed history, particularly in connection with spectral theory. For polynomial potentials of even degree, sharp asymptotic formulas for eigenvalues and detailed descriptions of spectral properties were obtained by Helffer and Robert \cite{MR657970,MR683006,MR662451}, with further refinements and extensions in \cite{MR743094,MR2304165,MR2599384}. More recently, attention has turned to one-dimensional anharmonic oscillators with nonsmooth potentials \cite{MR3554704}, revealing additional analytical challenges beyond the classical smooth setting. A significant generalisation was achieved in \cite{MR4489248}, where operators of the form $A(D)+V(x)$, with smooth symbols $A(\xi)$ and $V(x)$ satisfying suitable growth conditions, were analysed within the Weyl-H\"ormander calculus. The generalised anharmonic oscillators considered in the present work fall naturally within this class, and the corresponding H\"ormander metric and symbolic calculus can be constructed in an analogous manner. Moreover, several operators previously studied in the literature, including those in \cite{MR4299820,MR4944933}, arise as special instances of our framework.

The Schr\"odinger and heat semigroups generated by anharmonic oscillators have been widely investigated and play an important role in mathematical physics and partial differential equations; see, for instance, \cite{MR883643, liverts2008approximate, MR1681103,MR752806, MR2885963}. In particular, phase-space properties of the Hermite semigroup were studied in \cite{MR4313961}, and analogous estimates for fractional harmonic oscillators were obtained in \cite{MR4673072}. These results were extended in \cite{MR4944933} to special anharmonic oscillators of the form $(-\Delta)^l+|x|^{2k}$, still within the framework of unweighted modulation spaces $\mathcal{M}^{p,q}$.
Compared to \cite{MR4313961,MR4673072,MR4944933}, where only the harmonic and anharmonic oscillator or the model potential $|x|^{2k}$ is treated and boundedness is obtained exclusively in unweighted modulation spaces, we consider general polynomial-type potentials $V(x)$ and establish boundedness of the semigroup $e^{-t\Hd^{\beta}}$ in weighted modulation spaces $\mathcal{M}^{p,q}_{s}$.

Despite the extensive spectral analysis in \cite{MR4489248}, much less is known about the behavior of the associated semigroups on function spaces adapted to phase-space and time-frequency analysis. In particular, existing works do not address the mapping properties of heat semigroups generated by fractional powers of generalised anharmonic oscillators on weighted modulation spaces. Questions concerning boundedness, smoothing effects, and continuity in these spaces remain largely open within the general H\"ormander metric framework, even though such properties are central to the study of evolution equations and nonlinear problems.

Motivated by this gap, we investigate the positive fractional powers $\Hd^{\beta}$, $\beta>0$, of the generalised anharmonic oscillator
\[
\Hd=(-\Delta)^{l}+V(x),
\]
where $k,l\ge1$ and $V(x)$ is a strictly positive homogeneous potential of polynomial growth of order $2k$. Our approach relies on the construction of a H\"ormander metric $g=g^{k,l}$ adapted to the geometry of the combined symbol $|\xi|^{2l}+V(x)$, together with suitable modulation weight functions. Within this framework, we establish boundedness and decay properties for the semigroup $e^{-t\Hd^{\beta}}$ on weighted modulation spaces $\mathcal{M}^{p,q}_{s}$, $s\in\mathbb{R}$, for the full range $0<p,q\leq\infty$.

Among the extensive literature on modulation spaces, we refer to \cite{MR1843717}
for historical perspectives and the motivations underlying their introduction.
Various characterisations and fundamental properties of modulation spaces have
been developed in
\cite{ CorderoNicola09, Toft04, feichtinger1983modulation, WangHudzik07, SugimotoTomita07}.
The boundedness of a wide class of operators on modulation spaces has been
investigated in
\cite{feichtinger1983modulation, FeichtingerNarimani10, MR3636061,
RodinoWahlberg12, WangZhao09, Wahlberg10, BenyiOh12},
with applications to Fourier integral operators discussed in \cite{CorderoNicola07}.
Furthermore, modulation spaces have been proved to be a natural framework for the
analysis of nonlinear evolution equations; see
\cite{Boulkhemair98, CunananOkoudjou11, GuoFanWu11,
MR2668420,  SugimotoTomita09, MR2506839,MR4286055,MR4291583,MR4201879,MR2028532,MR1843717,MR3014810}. Their algebraic properties and applications to pseudodifferential operators and nonlinear equations were developed in \cite{MR2204673,MR4849356,MR2028532,MR3014810, MR3636061}. In particular, nonlinear heat and Schr\"odinger equations in modulation spaces were investigated in \cite{MR2506839,MR4313961,MR4944933,MR4291583,MR4215324}.

Our work develops a unified phase-space framework that simultaneously accommodates a broad class of generalised anharmonic oscillators and weighted modulation spaces, thereby substantially extending the scope of existing results. Weighted modulation spaces provide a natural refinement of modulation spaces by allowing precise control of decay and regularity in the phase-space. The incorporation of moderate weights is essential in this context, as moderateness guarantees invariance under time–frequency shifts and provides the convolution estimates required for the boundedness of the short-time Fourier transform and related operators (see \cite{MR1843717}). In particular, the modulation spaces considered in \cite{MR4313961,MR4673072,MR4944933} arise as special cases of the weighted modulation spaces developed in this work. From this viewpoint, weighted modulation spaces provide a finer description of phase-space regularity, in which both the polynomial growth of the potential and the anisotropic behavior of the associated symbol are simultaneously encoded. In this sense, both the operators and the functional framework employed in \cite{MR4313961,MR4673072,MR4944933} are recovered as special instances of the present theory.

Within this framework, our analysis applies to a wide class of potentials, including the harmonic oscillator $V(x)=|x|^{2}$, higher-order anharmonic models $V(x)=|x|^{2k}$, anisotropic potentials of the form
\[
V(x)=\sum_{j=1}^{d} a_j |x_j|^{2k}, \qquad a_j>0,
\]
as well as more general positive homogeneous polynomials of degree $2k$. Consequently, the framework developed here captures a wide range of anisotropic and non-radial dynamics that fall outside the reach of previously available modulation-space approaches, and provides a flexible analytical setting for further investigations of both linear and nonlinear problems associated with anharmonic-type operators.
By constructing a H\"ormander metric adapted to the quasi-homogeneous structure of the symbol $|\xi|^{2l}+V(x),$ of $\Hd,$ and applying Weyl-H\"ormander calculus \cite{MR1011988,MR2304165,MR2599384,MR2668420,MR3636061}, we show that $\Hd^{\beta}$ can be realised as a globally hypoelliptic pseudodifferential operator and derive refined symbol estimates for the associated heat semigroup. This provides a new phase-space description of fractional generalised anharmonic oscillators and extends the existing modulation space theory for heat semigroups from special model operators to a broad class of polynomial-type potentials.

Our main object of study is the positive fractional power $\Hd^{\beta}$, with $\beta>0$, and the boundedness properties of the associated heat semigroup $e^{-t\Hd^{\beta}}$ on weighted modulation spaces.

{In this article, we investigate the nonlinear evolution equations with power nonlinearity and spatially inhomogeneous power nonlinearity namely,
\begin{equation}
\label{eq:main}
\begin{cases}
\partial_t u + \Hd^{\beta} u = \lambda |u|^{2\nu}u, \\
u(0,x) = u_0(x),
\end{cases}
\end{equation}
 on $(0,\infty) \times \mathbb{R}^d$, where $\nu \in \mathbb{N}$, $\lambda \in \mathbb{C}$, and $\beta>0$,\\
and
\begin{align}
\label{eq:main1}
\begin{cases}
\partial_{t}u + \Hd^{\beta}u
= \lambda |x|^{-\alpha}\big(|u|^{2\nu}u\big),\\
u(0,x) = u_{0}(x),
\end{cases}
\end{align}
on $\in[0,\infty)\times\mathbb{R}^{d}$, where  $\lambda=\pm1$, $\alpha>0$, and $\nu\in\mathbb{N}$.

Such equations naturally arise in the study of nonlinear diffusion and dispersive phenomena.
Earlier works (see \cite{MR4313961, MR4944933}) considered this type of problems \ref{eq:main} for Hermite and anharmonic operators in the classical modulation spaces $\mathcal{M}^{p,q}$ with $p,q \ge 1$. More recently, it was observed by E.~Cordero \cite{MR4215324} that these equations associated to Hermite operator can also be treated within the broader framework of weighted modulation spaces $\mathcal{M}^{p,q}_s$, even in the case $0<q \le 1$.
Motivated by this observation, we investigate the equation \eqref{eq:main} in the weighted modulation spaces $\mathcal{M}^{p,q}_s$ with $1 \le p,q \le \infty$ and
\[
s >  \frac{d}{q'}.
\]
For this range of $s$, the space $\mathcal{M}^{p,q}_s$ becomes a multiplicative algebra. This algebra property allows us to control the nonlinear term effectively and, in contrast with \cite{MR4313961, MR4944933}, no additional conditions on the parameters $\beta$, $\nu$ are required in order to obtain global-in-time solutions of the equation, as was required in \cite{MR4944933, MR4313961}.  Moreover to the best of our knowledge, there are no existing results addressing the well-posedness of nonlinear heat equations associated with the anharmonic operator involving a spatially power nonlinearity of the form \eqref{eq:main1}.

{
As a special case of \eqref{eq:main1}  we also obtain the global well-posedness result associated with the Hermite operator,
\begin{align*}
H:=-\Delta+|x|^{2},
\end{align*}
which corresponds to the special case $k=l=1$ of the operator $\Hd$. 
For the classical Laplacian, related inhomogeneous nonlinear problems with initial data in modulation spaces have been recently studied in \cite{BHIMANI2026114106}.  
}
 
    The aim of this work is therefore twofold: to analyse the mapping properties of the heat semigroups generated by fractional operators and  to establish well-posedness results for the nonlinear equations in weighted modulation spaces.
Moreover, unlike \cite{MR4313961,MR4673072,MR4944933,MR4215324}, which studied specific model operators and rely on unweighted modulation spaces, the present work treats arbitrary fractional powers $\Hd^{\beta}$ with $\beta>0$ and establishes boundedness results on weighted modulation spaces $\mathcal{M}^{p,q}_{s}$. This combination of fractional powers, general polynomial-type potentials, and weighted modulation spaces has not been previously addressed. The weighted phase-space framework developed here allows one to simultaneously capture the fractional order of the operator and the polynomial growth of the potential that cannot be accommodated within the unweighted settings.

Finally, we study the Ornstein-Uhlenbeck operator
\[
\mathbf{L}=-\Delta+2x\cdot\nabla+d,
\]
which plays a central role in stochastic analysis, semigroup theory, and infinite-dimensional dynamics; see \cite{MR4628746,MR3966424}. When endowed with the Gaussian measure $\gamma_d(x)\,dx=\pi^{-d/2}e^{-|x|^2}dx$, this operator generates the Ornstein-Uhlenbeck semigroup, whose properties have been extensively investigated in Gaussian harmonic analysis, for details we refer \cite{MR3966424}. Despite this extensive literature, the dispersive and phase-space behavior of the Ornstein-Uhlenbeck semigroup from the perspective of partial differential equations has remained largely unexplored.
In this paper, we also address this gap by introducing a new modulation space based on the Gaussian short-time Fourier transform and by proving continuity properties of the Ornstein–Uhlenbeck semigroup in this setting. This yields a phase-space framework that is intrinsically adapted to the Gaussian geometry of the operator and cannot be obtained from the classical modulation spaces used for the Laplacian or the harmonic oscillator. As a result, our approach provides a genuinely new phase-space description of the Ornstein–Uhlenbeck semigroup and extends modulation-space methods to an operator that has so far been studied almost exclusively through Gaussian harmonic analysis rather than through PDE-oriented time–frequency techniques.

The paper is organized as follows. Section~2, presents the necessary background on modulation spaces, symbolic calculus, and the Weyl–Hörmander framework. In Section~3, we establish boundedness and smoothing properties of the heat semigroup generated by fractional generalised anharmonic oscillators on weighted modulation spaces. Section~4, is devoted to global well-posedness results for nonlinear heat equations associated with fractional anharmonic operators, encompassing both power-type and spatially inhomogeneous power nonlinearities. Finally, Section~5, investigates the Ornstein–Uhlenbeck semigroup on Gaussian modulation spaces and establishes continuity results within this adapted phase–space framework.

\section{Preliminaries}

This section briefly recalls some basic facts and properties of modulation spaces and the Weyl-H\"ormander calculus. For the definitions and fundamental properties of modulation spaces, we refer the reader to \cite{ feichtinger1983modulation, MR1843717}  and  details on the Weyl-H\"ormander calculus can be found in the monograph by Nicola–Rodino \cite{MR2668420} among many others. For further discussion of the Weyl–Hörmander calculus in the context of anharmonic oscillators, we refer to \cite{MR4299820, MR4489248} and the references therein.

\subsection{Modulation Spaces}

We begin by recalling the definition of the short-time Fourier transform (STFT).
Let $f\in\mathcal{S}'(\mathbb{R}^d)$ and let $g\in\mathcal{S}(\mathbb{R}^d)\setminus\{0\}$
be a fixed window function. The STFT of $f$ with respect to $g$ is defined by
\[
V_g f(x,\xi)
= \int_{\mathbb{R}^d} f(z)\,\overline{g(z-x)}\,e^{-2\pi i \xi\cdot z}\,\mathrm{d}z,
\qquad (x,\xi)\in\mathbb{R}^{2d}.
\]

Modulation spaces were introduced by Feichtinger in the 1980s \cite{feichtinger1983modulation}
and constitute a fundamental tool in time-frequency analysis.

A weight function $v$ on $\mathbb{R}^{2d}$ is called \emph{submultiplicative} if
\[
v(x+y)\le v(x)\,v(y), \qquad x,y\in\mathbb{R}^{2d}.
\]
A weight $w$ is said to be \emph{$v$-moderate} if
\[
w(x+y)\le v(x)\,w(y), \qquad x,y\in\mathbb{R}^{2d}.
\]

Of particular importance in this article are the polynomial weights
\[
v_s(x,\xi)
:= \bigl(1+|x|^2+|\xi|^2\bigr)^{\frac{s}{2}},
\qquad s\in\mathbb{R}.
\]
Weights that are $v_s$-moderate for some $s\in\mathbb{R}$ are referred to as
\emph{polynomially moderate}.

Let $0<p,q\le\infty$ and let $w$ be a polynomially moderate weight.
The modulation space $\mathcal{M}^{p,q}_w(\mathbb{R}^d)$ consists of all
$f\in\mathcal{S}'(\mathbb{R}^d)$ such that
\[
\|f\|_{\mathcal{M}^{p,q}_w}
:= \|V_g f\, w\|_{L^{p,q}}
= \left(
\int_{\mathbb{R}^d}
\left(
\int_{\mathbb{R}^d}
|V_g f(x,\xi)\,w(x,\xi)|^p\,\mathrm{d}x
\right)^{\frac{q}{p}}
\mathrm{d}\xi
\right)^{\frac{1}{q}}
<\infty,
\]
with the usual modifications when $p=\infty$ or $q=\infty$.
Here $L^{p,q}(\mathbb{R}^d\times\mathbb{R}^d)$ denotes the mixed Lebesgue space
with (quasi-)norm
\[
\|F\|_{L^{p,q}} := \bigl\|\|F(x,\xi)\|_{L_x^p}\bigr\|_{L_\xi^q}.
\]

When the weight is exactly $v_s$, we use the shorthand notation
\[
\mathcal{M}^{p,q}_s := \mathcal{M}^{p,q}_{v_s}.
\]

Modulation spaces are quasi-Banach spaces (and Banach spaces when $p,q\ge1$),
and their definition is independent of the choice of window function $g$,
in the sense that different window functions yield equivalent norms.
We refer to
\cite{MR4286055,MR4201879,MR2028532,MR1843717}
for proofs and further details.

We conclude this subsection by recalling several properties of modulation spaces
that will be used throughout the paper.

\begin{lemma}[\cite{MR4849356}]
\label{Inclusionrelation}
Let $1\le p,p_1,p_2,q,q_1,q_2\le\infty$ and $s,s_1,s_2\in\mathbb{R}$. Then:
\begin{enumerate}
\item If $p,q<\infty$, then $\mathcal{S}(\mathbb{R}^d)$ is dense in
$\mathcal{M}^{p,q}_s(\mathbb{R}^d)$ and
\[
\bigl(\mathcal{M}^{p,q}_s(\mathbb{R}^d)\bigr)'
= \mathcal{M}^{p',q'}_{-s}(\mathbb{R}^d),
\]
where $p'$ and $q'$ denote the conjugate exponents of $p$ and $q$.

\item If $p_1\le p_2$, $q_1\le q_2$, and $s_1\ge s_2$, then
\[
\mathcal{M}^{p_1,q_1}_{s_1}(\mathbb{R}^d)
\hookrightarrow
\mathcal{M}^{p_2,q_2}_{s_2}(\mathbb{R}^d).
\]

\item If $s>\frac{d}{q'}$, or if $q=1$ and $s\ge0$, then
$\mathcal{M}^{p,q}_s(\mathbb{R}^d)\subset C(\mathbb{R}^d)$ and
$\mathcal{M}^{p,q}_s(\mathbb{R}^d)$ is a multiplicative algebra.
More precisely, there exists a constant $C>0$ such that
\[
\|fg\|_{\mathcal{M}^{p,q}_s}
\le C\,
\|f\|_{\mathcal{M}^{p,q}_s}\,
\|g\|_{\mathcal{M}^{p,q}_s},
\qquad
f,g\in\mathcal{M}^{p,q}_s(\mathbb{R}^d).
\]
\end{enumerate}
\end{lemma}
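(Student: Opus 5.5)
We prove the three parts separately. Throughout we fix a Gaussian window $g_0(z)=e^{-\pi|z|^2}$ and use the fact, recalled above, that different windows give equivalent norms. We also use that $v_s$ is $v_{|s|}$-moderate, i.e. $v_s(z_1+z_2)\le C\,v_{|s|}(z_1)\,v_s(z_2)$, which follows from Peetre's inequality.

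\emph{Part (1).} For density, we approximate $f\in\mathcal{M}^{p,q}_s$ through the STFT inversion formula
\[
f=\frac{1}{\|g_0\|_{2}^2}\int_{\mathbb{R}^{2d}} V_{g_0}f(x,\xi)\,M_\xi T_x g_0\,\mathrm{d}x\,\mathrm{d}\xi .
\]
\begin{itemize}
\item Truncating $V_{g_0}f$ to a compact set $K_n$ gives $f_n\in\mathcal{S}$.
\item The synthesis operator $F\mapsto\int F\,M_\xi T_x g_0$ is bounded from $L^{p,q}_{v_s}$ to $\mathcal{M}^{p,q}_s$. To see this, note that $V_{g_0}$ of the synthesis is $F$ twisted-convolved with $V_{g_0}g_0$. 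Young's inequality in mixed norms then applies, using the moderateness of $v_s$ and the fact that $V_{g_0}g_0\in L^1_{v_{|s|}}$.
\item Since $p,q<\infty$, dominated convergence gives $\|V_{g_0}f\,(1-\chi_{K_n})\|_{L^{p,q}_{v_s}}\to0$, hence $f_n\to f$ in $\mathcal{M}^{p,q}_s$.
\end{itemize}
For duality, the pairing $\langle f,h\rangle=\|g_0\|_2^{-2}\langle V_{g_0}f,V_{g_0}h\rangle_{L^2(\mathbb{R}^{2d})}$ together with Hölder's inequality in $L^{p,q}$ with weights $v_s,v_{-s}$ gives the embedding $\mathcal{M}^{p',q'}_{-s}\hookrightarrow(\mathcal{M}^{p,q}_s)'$. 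Conversely, given a functional $\ell$, we compose it with the synthesis operator to get a functional on $L^{p,q}_{v_s}$. Since $(L^{p,q}_{v_s})'=L^{p',q'}_{v_{-s}}$ for $p,q<\infty$, this functional is represented by some $H\in L^{p',q'}_{v_{-s}}$. We then set $h$ to be the synthesis of $H$, which lies in $\mathcal{M}^{p',q'}_{-s}$. Density identifies $\ell$ with $\langle\cdot,h\rangle$.

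\emph{Part (2).} The weight inclusion is immediate because $v_{s_2}\le v_{s_1}$. For the exponents, the key step is the reproducing identity $V_{g_0}f=\|g_0\|_2^{-2}\,V_{g_0}f\,\natural\,V_{g_0}g_0$, meaning $|V_{g_0}f|\le C\,|V_{g_0}f|*|V_{g_0}g_0|$. Applying Young's inequality with a kernel in $L^{r}_{v_{|s|}}$ for every $r$ yields $\|V_{g_0}f\|_{L^{p_2,q_2}_{v_s}}\lesssim\|V_{g_0}f\|_{L^{p_1,q_1}_{v_s}}$. Alternatively, one can sample on a lattice via Gabor frames, where $\ell^{p_1}\subset\ell^{p_2}$ for $p_1\le p_2$.

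\emph{Part (3).} We use the frequency-uniform decomposition characterisation
\[
\|f\|_{\mathcal{M}^{p,q}_s}\asymp\Bigl\|\langle k\rangle^{s}\,\|\square_k f\|_{L^p}\Bigr\|_{\ell^q_k}.
\]
This holds for $v_s$ with the convention that the modulation weight measures $\xi$-growth; the $x$-part is handled analogously, by symmetry under the Fourier transform.

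\begin{enumerate}
\item \textbf{Continuity.} We have $\|\square_k f\|_{L^\infty}\lesssim\|\square_k f\|_{L^p}$ by Bernstein's inequality. Hölder's inequality in $k$ then gives $\sum_k\|\square_k f\|_\infty\le\|\langle k\rangle^{-s}\|_{\ell^{q'}}\,\|f\|_{\mathcal{M}^{p,q}_s}$, and the first factor is finite exactly when $sq'>d$, or when $q=1$ and $s\ge0$. So the series $\sum_k\square_k f$ converges uniformly and $f$ is continuous.
\item \textbf{Algebra.} We write $\square_k(fg)=\sum_{i,j}\square_k(\square_i f\,\square_j g)$, where only terms with $|i+j-k|\lesssim1$ survive. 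Then
\[
\|\square_k(fg)\|_p\lesssim\sum_{i}\|\square_i f\|_p\,\|\square_{k-i} g\|_\infty .
\]
We use $\langle k\rangle^s\lesssim\langle i\rangle^s+\langle k-i\rangle^s$ and split the sum accordingly. Each piece is a convolution $\ell^q_s*\ell^1\subset\ell^q_s$. The $\ell^1$ factor is controlled by $\sum_j\|\square_j g\|_\infty\lesssim\|g\|_{\mathcal{M}^{p,q}_s}$, which is exactly the bound from the continuity step.
\end{enumerate}

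We expect the main obstacle to be the genuinely weighted case: keeping track of the $x$-part of the weight $v_s$ under multiplication. The remedy is to use the submultiplicativity of $v_s$ in the form
\[
V_{g_0^2}(fg)(x,\xi)=\int V_{g_0}f(x,\eta)\,V_{g_0}g(x,\xi-\eta)\,\mathrm{d}\eta .
\]
This reduces the algebra property to the estimate $L^{p,q}_{v_s}\cdot L^{\infty,1}_{v_s}$ together with the embedding $\mathcal{M}^{p,q}_s\subset\mathcal{M}^{\infty,1}$ for $s>d/q'$, proved exactly as in the continuity step.
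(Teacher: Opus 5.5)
Your parts (1) and (2) are correct. They are the standard arguments: the inversion formula, boundedness of the synthesis operator on $L^{p,q}_{v_s}$, and the reproducing inequality $|V_{g_0}f|\lesssim|V_{g_0}f|*|V_{g_0}g_0|$ combined with mixed-norm Young. The paper gives no proof of this lemma, only a citation.

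The gap is in part (3), for the weight the paper actually uses, $v_s(x,\xi)=(1+|x|^2+|\xi|^2)^{s/2}$. The characterisation $\|f\|_{\mathcal{M}^{p,q}_s}\asymp\|\langle k\rangle^{s}\|\square_k f\|_{L^p}\|_{\ell^q}$ holds only for weights depending on $\xi$ alone, and for $v_s$ it is false. For $f=T_y\phi$ the right-hand side does not depend on $y$, while the left-hand side grows like $\langle y\rangle^{s}$. The Fourier transform does not repair this. It swaps the order of the mixed norm and intertwines $\mathcal{M}^{p,q}$ with the Wiener amalgam $W(\mathcal{F}L^p,L^q)$, so it is not a symmetry of $\mathcal{M}^{p,q}$ unless $p=q$. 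Your continuity step survives, because $v_s\ge\langle\xi\rangle^{s}$ for $s\ge0$ gives $\mathcal{M}^{p,q}_{s}\hookrightarrow\mathcal{M}^{p,q}_{1\otimes\langle\xi\rangle^{s}}$. The $\square_k$ computation, however, does not prove the algebra estimate for $v_s$.

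Your fallback identity $V_{g_0^2}(fg)(x,\xi)=\int V_{g_0}f(x,\eta)\,V_{g_0}g(x,\xi-\eta)\,\mathrm{d}\eta$ is the right tool, but it does not close the way you describe. Submultiplicativity, $v_s(x,\xi)\le v_s(x,\eta)\langle\xi-\eta\rangle^{s}$, puts weight on both factors. That leads to $\|f\|_{\mathcal{M}^{p,q}_s}\|g\|_{\mathcal{M}^{\infty,1}_{1\otimes\langle\xi\rangle^{s}}}$. But $\mathcal{M}^{p,q}_s\not\subset\mathcal{M}^{\infty,1}_{1\otimes\langle\xi\rangle^{s}}$ when $q>1$ (take $f=\sum_k\langle k\rangle^{-s}a_k M_k\phi$ with $a\in\ell^q\setminus\ell^1$). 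The embedding you actually prove lands only in the unweighted $\mathcal{M}^{\infty,1}$.

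What you need is the additive bound for $s\ge0$,
\[
v_s(x,\xi)\lesssim v_s(x,\eta)+\langle\xi-\eta\rangle^{s},
\]
which follows from the triangle inequality for $(1,x,\xi)=(1,x,\eta)+(0,0,\xi-\eta)$. In each resulting term, take $L^p_x$ on the weighted factor and $L^\infty_x$ on the other, then apply Young's inequality $L^q*L^1$ in the frequency variable. This gives
\[
\|fg\|_{\mathcal{M}^{p,q}_s}\lesssim\|f\|_{\mathcal{M}^{p,q}_s}\,\|g\|_{\mathcal{M}^{\infty,1}}+\|f\|_{\mathcal{M}^{\infty,1}}\,\|g\|_{\mathcal{M}^{p,q}_s},
\]
with the $x$-part of the weight carried entirely by the first term. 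You then close with $\mathcal{M}^{p,q}_s\hookrightarrow\mathcal{M}^{p,1}\hookrightarrow\mathcal{M}^{\infty,1}$. This follows from H\"older in $\xi$ (using $\|\langle\xi\rangle^{-s}\|_{L^{q'}}<\infty$, or $q=1$ and $s\ge0$) and your part (2).

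This is the same splitting you used for $\langle k\rangle^{s}$, but it must be done at the level of the STFT. For a weight depending on $\xi$ only, your $\square_k$ argument is already the standard complete proof.
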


\subsection{A Modulation Weight Adapted to $\Hd$}

Let $k,l\ge1$ be integers and consider the generalised anharmonic oscillator
\[
\mathcal H_{k,l}=(-\Delta)^l+V(x),
\]
where the potential $V(x)$ has polynomial growth of order $|x|^{2k}$.
We define a phase-space weight by
\[
\widetilde v(x,\xi)
:= q_{1}+V(x)^{1/2}+|\xi|^l,
\qquad (x,\xi)\in\mathbb R^d\times\mathbb R^d,
\]
where $q_{1}>0$ is fixed. Since $V(x)^{1/2}\sim |x|^k$ for large $|x|$, this weight reflects the
intrinsic anisotropic scaling of $\mathcal H_{k,l}$.

For all $x,y,\xi,\eta\in\mathbb R^d$, there exists a constant $C>1$ such that
\begin{align}
\widetilde v(x,\xi)\,\widetilde v(y,\eta)
&= (q_{1}+V(x)^{1/2}+|\xi|^l)(q_{1}+V(y)^{1/2}+|\eta|^l) \nonumber\\
&\gtrsim (q_{1}+|x|^k+|\xi|^l)(q_{1}+|y|^k+|\eta|^l) \nonumber\\
&\gtrsim q_{1}+V(x+y)^{1/2}+|\xi+\eta|^l
= \widetilde v(x+y,\xi+\eta).
\label{eq:submultiplicative-weight}
\end{align}
Setting $v:=C\,\widetilde v$ therefore yields a submultiplicative weight on phase-pace.
This property guarantees the stability of the associated modulation spaces under time-frequency
shifts and is essential for the boundedness of pseudodifferential operators arising from the
functional calculus of $\mathcal H_{k,l}$.

Throughout the paper we work with weighted modulation spaces $ \mathcal{M}^{p,q}_{s}$ defined
with respect to the weight $v_s=(q_{1}+V(x)^{1/2}+|\xi|^{l})^{s}$, $s\in\mathbb R$. The choice of $v$ allows us to treat
fractional anharmonic semigroups within a unified phase-space framework, extending beyond
classical harmonic and Gaussian settings.
\subsection{Symbolic Calculus}

In this subsection, we introduce several notions and symbol classes that will be used frequently in the sequel. We refer to \cite{MR2668420} for further details and background.

A positive continuous function $\Phi(x,\xi)$ on $\mathbb{R}^{2d}$ is called a
\emph{sublinear weight} if
\begin{equation}
1 \le \Phi(x,\xi) \lesssim 1+|x|+|\xi|,
\qquad x,\xi\in\mathbb{R}^d.
\end{equation}
It is called a \emph{temperate weight} if there exists $s>0$ such that
\begin{equation}\label{temperate}
\Phi(x+y,\xi+\eta)
\lesssim \Phi(x,\xi)\,(1+|y|+|\eta|)^s,
\qquad x,\xi,y,\eta\in\mathbb{R}^d.
\end{equation}
Products and real powers of temperate weights are again temperate, and one always has
\[
(1+|y|+|\eta|)^{-s}
\lesssim \Phi(y,\eta)
\lesssim (1+|y|+|\eta|)^s.
\]

The weights of interest in this work are of the form
\[
\Phi(x,\xi)
:= q_{1} + V(x) + |\xi|^{2l},
\]
where $q_{1}>0$ is chosen such that $\Phi(x,\xi)\ge 1$, and where $V(x)$ is a strictly positive function with polynomial growth of order $2k$. Setting
\[
k_0 := \max\{k,l\},
\]
we observe that
\[
1 \le \Phi(x,\xi)^{\frac{1}{2k_0}}
\lesssim (1+|x|^{2k}+|\xi|^{2l})^{\frac{1}{2k_0}}
\lesssim 1+|x|+|\xi|,
\]
so that $\Phi^{\frac{1}{2k_{0}}}$ is sublinear.

Moreover, $\Phi$ is a temperate weight. Indeed,
\[
\Phi(x+y,\xi+\eta)
= q_{1} + V(x+y) + |\xi+\eta|^{2l}.
\]
Since
\[
|\xi+\eta|^{2l}
\lesssim |\xi|^{2l} + |\eta|^{2l}
\lesssim \Phi(x,\xi)(1+|\eta|)^{2l},
\]
and since $V(x)$ has polynomial growth of order $2k$,
\[
V(x+y)
\lesssim (1+|x|+|y|)^{2k}
\lesssim V(x)\,(1+|y|)^{2k},
\]
we obtain
\[
\Phi(x+y,\xi+\eta)
\lesssim \Phi(x,\xi)\,(1+|y|+|\eta|)^{2\max\{k,l\}},
\]
which proves the claim. Consequently, $\Phi^s$ is temperate for every $s\in\mathbb{R}$.

\begin{definition}[Symbol class]
\label{Symboldefinition}
Let $\Phi(x,\xi)$ and $\Psi(x,\xi)$ be sublinear, temperate weights, and let
$M(x,\xi)$ be a temperate weight. The symbol class $S(M;\Phi,\Psi)$ consists of
all functions $a\in C^\infty(\mathbb{R}^{2d})$ such that for every
$\alpha,\beta\in\mathbb{N}^d$,
\[
|\partial_\xi^\alpha \partial_x^\beta a(x,\xi)|
\lesssim
M(x,\xi)\,\Psi(x,\xi)^{-|\alpha|}\,\Phi(x,\xi)^{-|\beta|},
\qquad (x,\xi)\in\mathbb{R}^{2d}.
\]
\end{definition}

The family of seminorms
\[
\|a\|_{k,S(M;\Phi,\Psi)}
:=
\sup_{|\alpha|+|\beta|\le k}
\sup_{(x,\xi)\in\mathbb{R}^{2d}}
|\partial_\xi^\alpha \partial_x^\beta a(x,\xi)|
M(x,\xi)^{-1}\Psi(x,\xi)^{|\alpha|}\Phi(x,\xi)^{|\beta|},
\]
with $k\in\mathbb{N}$, endows $S(M;\Phi,\Psi)$ with a Fr\'echet space topology.

\begin{definition}
A symbol $a$ is called \emph{globally elliptic} in $S(M;\Phi,\Psi)$ if
$a\in S(M;\Phi,\Psi)$ and there exists $R>0$ such that
\begin{equation}\label{eqn1}
|a(x,\xi)| \gtrsim M(x,\xi),
\qquad |x|+|\xi|\ge R.
\end{equation}
\end{definition}

\begin{definition}
A symbol $a\in S(M;\Phi,\Psi)$ is called \emph{globally hypoelliptic} if there exists
a temperate weight $M_0(x,\xi)$ and $R>0$ such that
\begin{equation}\label{eqn2}
|a(x,\xi)| \gtrsim M_0(x,\xi),
\qquad |x|+|\xi|\ge R,
\end{equation}
and for all $\alpha,\beta\in\mathbb{N}^d$,
\begin{equation}\label{eqn3}
|\partial_\xi^\alpha \partial_x^\beta a(x,\xi)|
\lesssim
|a(x,\xi)|\,\Psi(x,\xi)^{-|\alpha|}\,\Phi(x,\xi)^{-|\beta|},
\qquad |x|+|\xi|\ge R.
\end{equation}
We denote this class by $\mathrm{Hypo}(M,M_0;\Phi,\Psi)$.
\end{definition}

\begin{remark}
\label{remark}
If \eqref{eqn2} holds with $M_0=M$, then $a$ is elliptic.
Conversely, every elliptic symbol in $S(M;\Phi,\Psi)$ is hypoelliptic, since
\eqref{eqn2} holds trivially and \eqref{eqn3} follows from the defining estimates
of the symbol class.
\end{remark}


\subsection{Weyl-H\"ormander Calculus}

In this subsection, we recall basic definitions and notations from the
Weyl-H\"ormander calculus. We refer the reader to
\cite{MR2304165,MR2599384,MR1011988} for further details.

The Weyl quantisation of a symbol
$a\in\mathcal{S}'(\mathbb{R}^d\times\mathbb{R}^d)$
is defined by
\[
a^{w}(x,D)u(x)
:= \frac{1}{(2\pi)^d}
\int_{\mathbb{R}^d}\int_{\mathbb{R}^d}
e^{i\langle x-y,\xi\rangle}
a\!\left(\frac{x+y}{2},\xi\right)
u(y)\,\mathrm{d}y\,\mathrm{d}\xi,
\qquad u\in\mathcal{S}(\mathbb{R}^d).
\]

More generally, for $t\in\mathbb{R}$, the $t$-quantisation of $a$ is given by
\[
a_t(x,D)u(x)
:= \frac{1}{(2\pi)^d}
\int_{\mathbb{R}^d}\int_{\mathbb{R}^d}
e^{i\langle x-y,\xi\rangle}
a(tx+(1-t)y,\xi)
u(y)\,\mathrm{d}y\,\mathrm{d}\xi.
\]
In particular, the Weyl quantisation corresponds to $t=\frac12$, that is,
$a_{\frac12}(x,D)=a^{w}(x,D)$, while the choice $t=1$ yields the
Kohn-Nirenberg quantisation,
\[
a(x,D)u(x)
:= a_1(x,D)u(x)
= \frac{1}{(2\pi)^d}
\int_{\mathbb{R}^d}
e^{i\langle x,\xi\rangle}
a(x,\xi)\,\widehat{u}(\xi)\,\mathrm{d}\xi.
\]

We now recall the notion of a H\"ormander metric, which will play a crucial
role in the analysis of the generalised anharmonic oscillator and its
fractional powers.

\begin{definition}[H\"ormander metric]\label{HM}
Let $X\in\mathbb{R}^{2d}$ and let $g_X(\cdot)$ be a positive definite quadratic
form on $\mathbb{R}^{2d}$. The family $g=(g_X)_{X\in\mathbb{R}^{2d}}$ is called a
\emph{H\"ormander metric} if the following conditions hold:
\begin{enumerate}
\item[\rm (I)] \textbf{Continuity (slowness).}
There exists $C>0$ such that
\[
g_X(X-Y)\le C^{-1}
\quad\Longrightarrow\quad
\left(\frac{g_X(T)}{g_Y(T)}\right)^{\pm1}\le C,
\]
for all $T\in\mathbb{R}^{2d}\setminus\{0\}$.

\item[\rm (II)] \textbf{Uncertainty principle.}
Let $\sigma(Y,Z)=\langle z,\eta\rangle-\langle y,\zeta\rangle$
denotes the symplectic form and define
\[
g_X^{\sigma}(T)
:= \sup_{W\neq0}\frac{\sigma(T,W)^2}{g_X(W)}.
\]
We require that
\[
\lambda_g(X)
:= \inf_{T\neq0}\left(\frac{g_X^{\sigma}(T)}{g_X(T)}\right)^{1/2}
\ge 1,
\qquad X\in\mathbb{R}^{2d}.
\]

\item[\rm (III)] \textbf{Temperateness.}
There exist $\overline{C}>0$ and $J\in\mathbb{N}$ such that
\[
\left(\frac{g_X(T)}{g_Y(T)}\right)^{\pm1}
\le \overline{C}\bigl(1+g_Y^{\sigma}(X-Y)\bigr)^J,
\]
for all $X,Y,T\in\mathbb{R}^{2d}$.
\end{enumerate}
\end{definition}

Associated with a H\"ormander metric $g$ is the \emph{Planck function}
\[
h_g(X)^2
:= \sup_{T\neq0}\frac{g_X(T)}{g_X^{\sigma}(T)},
\]
which satisfies $h_g(X)=\lambda_g(X)^{-1}$.

\begin{definition}[$g$-weight]\label{GW}
Let $M:\mathbb{R}^{2d}\to(0,\infty)$.
\begin{itemize}
\item $M$ is called \emph{$g$-continuous} if there exists $\tilde{C}>0$ such that
\[
g_X(X-Y)\le \tilde{C}^{-1}
\quad\Longrightarrow\quad
\left(\frac{M(X)}{M(Y)}\right)^{\pm1}\le \tilde{C}.
\]

\item $M$ is called \emph{$g$-temperate} if there exists $\tilde{C}>0$ and
$N\in\mathbb{N}$ such that
\[
\left(\frac{M(X)}{M(Y)}\right)^{\pm1}
\le \tilde{C}\bigl(1+g_Y^{\sigma}(X-Y)\bigr)^N.
\]
\end{itemize}
We say that $M$ is a \emph{$g$-weight} if it is both $g$-continuous and
$g$-temperate.
\end{definition}

\begin{definition}
Let $g$ be a H\"ormander metric and let $M$ be a $g$-weight.
The symbol class $S(M,g)$ consists of all functions
$\sigma\in C^\infty(\mathbb{R}^{2d})$ such that for every $k\in\mathbb{N}$
there exists $C_k>0$ with
\begin{equation}\label{inwhk}
|\sigma^{(k)}(X;T_1,\ldots,T_k)|
\le C_k\,M(X)\prod_{j=1}^k g_X^{1/2}(T_j),
\end{equation}
for all $X,T_1,\ldots,T_k\in\mathbb{R}^{2d}$.
\end{definition}

The smallest constant $C_k$ in \eqref{inwhk} defines a seminorm
$\|\sigma\|_{k,S(M,g)}$, and the corresponding family
$\{\|\cdot\|_{k,S(M,g)}\}_{k\in\mathbb{N}}$ endows $S(M,g)$ with a Fr\'echet space
structure.

\subsection{Hörmander Metric Associated with $\mathcal H_{k,l}$}

We now introduce a Hörmander metric intrinsically adapted to the operator $\mathcal H_{k,l}$.
Define
\[
g:=g^{k,l}
= \frac{dx^2}{(q_{1}+V(x)+|\xi|^{2l})^{1/k}}
+ \frac{d\xi^2}{(q_{1}+V(x)+|\xi|^{2l})^{1/l}},
\]
where $q_{1}>0$ is chosen so that $q_{1}+V(x)+|\xi|^{2l}\ge1$ for all
$(x,\xi)\in\mathbb R^d\times\mathbb R^d$.
This metric encodes the quasi-homogeneous interaction between the spatial and frequency
variables.

It was shown in \cite{MR4299820,MR4489248} that $g$ is a Hörmander metric. Moreover, the
function
\[
M(x,\xi):=q_{1}+V(x)+|\xi|^{2l}
\]
is a $g$-weight in the sense of Hörmander's symbolic calculus. Consequently, the symbol class
\[
S(M,g)=S\bigl(q_{1}+V(x)+|\xi|^{2l},\,g\bigr)
\]
is well suited for the analysis of pseudodifferential operators associated with
$\mathcal H_{k,l}$ and its fractional powers.

The corresponding uncertainty parameter is given by
\[
\lambda_g
= (q_{1}+V(x)+|\xi|^{2l})^{\frac{k+l}{2kl}},
\]
and the associated Planck function $h_g$ satisfies
\[
h_g^{-1}=\lambda_g \simeq v^{\frac{k+l}{kl}},
\]
which establishes a precise quantitative link between the phase-space geometry induced by $g$
and the modulation weight $v$.

\subsection{Equivalent Anisotropic Symbol Classes}

An equivalent description of the Hörmander symbol class
$S((q_{1}+V(x)+|\xi|^{2l})^{m/2},g)$ can be given in terms of anisotropic derivative estimates.
A symbol $a\in C^\infty(\mathbb R^d\times\mathbb R^d)$ is said to belong to the class
$\Sigma^m_{k,l}$ if
\begin{align}
\label{symbolclassdef}
|\partial_x^\beta \partial_\xi^\alpha a(x,\xi)|
\le C_{\alpha,\beta}
\bigl(q_{1}+V(x)^{1/2}+|\xi|^l\bigr)^{\,m-\frac{|\beta|}{k}-\frac{|\alpha|}{l}},
\end{align}
for all multi-indices $\alpha,\beta$ and all $(x,\xi)\in\mathbb R^d\times\mathbb R^d$.

By Proposition~4.2 in \cite{MR4489248}, these classes coincide, namely
\[
\Sigma^m_{k,l}
= S\bigl((q_{1}+V(x)+|\xi|^{2l})^{m/2},\,g\bigr).
\]
Equipped with the natural family of seminorms, $\Sigma^m_{k,l}$ is a Fréchet space.


\section{Generalised Fractional Anharmonic Heat Semigroup on Weighted Modulation Spaces}

In this section we establish the main boundedness and smoothing properties of the heat semigroup
generated by fractional powers of the generalised anharmonic oscillator on weighted modulation
spaces. Throughout, we fix integers $k,l\ge1$ and consider the operator
\[
\mathcal H_{k,l}=(-\Delta)^l+V(x),
\]
where $V(x)$ is a strictly positive homogeneous potential of polynomial growth of order $2k$.

As observed in \cite{MR4299820}, the anisotropic symbol classes discussed in Section~2 fit naturally into the general framework $S(M;\Phi,\Psi)$ of Nicola-Rodino \cite{MR2668420}.
Let
\[
k_0 := \max\{k,l\},
\]
the metric $g=g^{(k,l)}$ satisfies
\[
g
\le
\frac{\mathrm{d}x^{2}}{(q_{1}+V(x)+|\xi|^{2l})^{1/k_0}}
+
\frac{\mathrm{d}\xi^{2}}{(q_{1}+V(x)+|\xi|^{2l})^{1/k_0}}.
\]
Define
\[
\Phi(x,\xi)=\Psi(x,\xi)=(q_{1}+V(x)+|\xi|^{2l})^{\frac{1}{2k_0}},
\qquad
M(x,\xi)=q_{1}+V(x)+|\xi|^{2l}.
\]
As discussed above, $\Phi$ and $\Psi$ are sublinear, temperate weights, and $M$ is temperate. Consequently, one can consider the symbol class $S(M;\Phi,\Psi)$, and we obtain the continuous embedding
\begin{align}
\label{Symbolinclusion}
S\bigl((q_{1}+V(x)+|\xi|^{2l})^{\frac{m}{2}},g\bigr)
\subset
S\bigl(M^{\frac{m}{2}};\Phi,\Psi\bigr).
\end{align}

Indeed, if $a\in S\bigl((q_{1}+V(x)+|\xi|^{2l})^{\frac{m}{2}},g\bigr)$, then by \eqref{symbolclassdef},
\begin{align*}
|\partial_x^{\beta}\partial_\xi^{\alpha} a(x,\xi)|
&\le
C_{\alpha,\beta}\bigl(q_{1}+V(x)^{1/2}+|\xi|^{l}\bigr)^{m-\frac{|\beta|}{k}-\frac{|\alpha|}{l}} \\
&\le
C'_{\alpha,\beta}(q_{1}+V(x)+|\xi|^{2l})^{\frac{m}{2}-\frac{|\beta|}{2k}-\frac{|\alpha|}{2l}} \\
&\le
C'_{\alpha,\beta}(q_{1}+V(x)+|\xi|^{2l})^{\frac{m}{2}-\frac{|\beta|}{2k_0}-\frac{|\alpha|}{2k_0}} \\
&=
C'_{\alpha,\beta}\,M^{\frac{m}{2}}\Psi^{-|\alpha|}\Phi^{-|\beta|},
\end{align*}
which proves \eqref{Symbolinclusion}.
The Weyl symbol of $\mathcal H_{k,l}$ is given by $V(x)+|\xi|^{2l}.$

For every multi-index $\alpha\in\mathbb{N}^d$, we have
\[
|\partial_\xi^{\alpha}(|\xi|^{2l})|
=
C_{\alpha}|\xi|^{2l-|\alpha|}
\lesssim
(q_{1}+V(x)^{1/2}+|\xi|^{l})^{2-\frac{|\alpha|}{l}},
\]
and similarly, for any $\beta\in\mathbb{N}^d$,
\[
|\partial_x^{\beta}V(x)|
\le
C_{\beta}|x|^{2k-|\beta|}
\lesssim
(q_{1}+V(x)^{1/2}+|\xi|^{l})^{2-\frac{|\beta|}{k}}.
\]
Using Lemma~5.4 of \cite{MR4489248}, it follows that,
\[
\mathcal{H}_{k,l}(x,\xi)\in\Sigma^{2}_{k,l},
\]
and therefore
\[
\mathcal{H}_{k,l}(x,\xi)\in S(M;\Phi,\Psi).
\]
Finally, there exist constants $C>0$ and $R>0$ such that
\[
|\xi|^{2l}+V(x)
\ge
C\,(q_{1}+V(x)+|\xi|^{2l}),
\qquad
|x|+|\xi|\ge R,
\]
which shows that $M$ is elliptic.

Therefore, by Remark~\ref{remark}, we conclude that
\[
\mathcal{H}_{k,l}(x,\xi)\in \mathrm{Hypo}(M,M_{0};\Phi,\Psi),
\]
where
\[
M(x,\xi)=M_{0}(x,\xi)=q_{1}+V(x)+|\xi|^{2l},
\qquad
\Phi(x,\xi)=\Psi(x,\xi)=(q_{1}+V(x)+|\xi|^{2l})^{\frac{1}{2k_{0}}},
\]
and $k_{0}=\max\{k,l\}.$
As a consequence, Theorem~4.2.9 in \cite{MR2668420} implies that the spectrum of the closure $\overline{\mathcal{H}_{k,l}}$ in $L^{2}(\mathbb{R}^{d})$ consists of a purely discrete sequence of eigenvalues diverging to $+\infty$.

It is important to note that the densely defined operator $\mathcal{H}_{k,l}$ on $L^{2}(\mathbb{R}^{d})$ is invertible. Indeed, since both $(-\Delta)^{l}$ and the potential $V(x)$ are strictly positive operators, all eigenvalues of $\mathcal{H}_{k,l}$ are strictly positive. In particular, $0$ does not belong to the spectrum of $\overline{\mathcal{H}_{k,l}}$.

Moreover, all eigenvalues of $\overline{\mathcal{H}_{k,l}}$ have finite multiplicity, and the corresponding eigenfunctions belong to the Schwartz space $\mathcal{S}(\mathbb{R}^{d})$. Consequently, $L^{2}(\mathbb{R}^{d})$ admits an orthonormal basis consisting of eigenfunctions of $\overline{\mathcal{H}_{k,l}}$.

Let $\{\lambda_j\}_{j\ge 0}$ denote the eigenvalues of $\mathcal{H}_{k,l}$, arranged in increasing order and counted with multiplicity, with $\lambda_0>0$ denoting the smallest eigenvalue. Owing to the polynomial growth of the potential $V(x)\sim |x|^{2k}$ and the quasi-homogeneous structure of the symbol $|\xi|^{2l}+V(x)$, the eigenvalue asymptotics established at the end of Section~5 in \cite{MR4299820} for the harmonic oscillator $-\Delta+|x|^{2}$ extend to the operator $\mathcal{H}_{k,l}$. In particular, one obtains the following asymptotic behavior:
\begin{equation}
\label{eigenvalueestimate}
\lambda_j \sim C_{k,l}\, j^{\frac{2kl}{d(k+l)}},
\qquad \text{as } j\to\infty,
\end{equation}
for some constant $C_{k,l}>0$ depending only on $k$ and $l$.

Let $\{\Phi_{j,i}\}_{i=1}^{d_j}$ be an orthonormal basis of the eigenspace $H_j$ corresponding to the eigenvalue $\lambda_j$, and denote by $d_j=\dim H_j$ its multiplicity. Writing $P_j$ for the orthogonal projection onto $H_j$, we have the spectral decomposition
\begin{align*}
\mathcal{H}_{k,l} f
=
\sum_{j=0}^{\infty} \lambda_j P_j f,
\qquad
P_j f
=
\sum_{i=1}^{d_j} \langle f,\Phi_{j,i}\rangle \Phi_{j,i},
\end{align*}
where $\langle\cdot,\cdot\rangle$ denotes the inner product in $L^{2}(\mathbb{R}^{d})$.

For any $\beta\in\mathbb{R}$, the fractional powers of $\mathcal{H}_{k,l}$ are then defined via the spectral theorem by
\begin{align*}
\mathcal{H}_{k,l}^{\beta} f
=
\sum_{j=0}^{\infty} \lambda_j^{\beta} P_j f.
\end{align*}
We now introduce a class of function spaces naturally associated with the generalised anharmonic oscillator, namely the generalised anharmonic Sobolev spaces.

Let $\mathcal{H}_{k,l}$ be the generalised anharmonic oscillator. For $s\in\mathbb{R}$ and $k,l\in\mathbb{N}$, the corresponding Sobolev space $Q^{s}_{k,l}$ is defined by
\begin{align*}
Q^{s}_{k,l}
:=
\Bigl\{u\in\mathcal{S}'(\mathbb{R}^{d}) : 
\bigl\|(\mathcal{H}_{k,l})^{\frac{s}{2}}u\bigr\|_{L^{2}(\mathbb{R}^{d})}<\infty
\Bigr\}.
\end{align*}
When $l=1$ and $V(x)=|x|^{2}$, the space $Q^{s}_{k,l}$ coincides with the classical Shubin–Sobolev space, also referred to as the Hermite–Sobolev space.

Using the spectral decomposition of $\mathcal{H}_{k,l}$, the space $Q^{s}_{k,l}$ admits the equivalent characterization
\begin{align*}
\|f\|^{2}_{Q^{s}_{k,l}}
:=
\bigl\|(\mathcal{H}_{k,l})^{\frac{s}{2}}f\bigr\|^{2}_{L^{2}(\mathbb{R}^{d})}
=
\sum_{j=0}^{\infty}
\lambda_{j}^{s}\,
\|P_{j}f\|^{2}_{L^{2}(\mathbb{R}^{d})},
\end{align*}
where $\{\lambda_j\}_{j\ge0}$ are the eigenvalues of $\mathcal{H}_{k,l}$ and $P_j$ denotes the orthogonal projection onto the corresponding eigenspace. In particular, $Q^{s}_{k,l}$ consists precisely of those tempered distributions whose spectral coefficients exhibit the weighted $\ell^{2}$-summability dictated by the eigenvalue growth of $\mathcal{H}_{k,l}$.

The space $Q^{s}_{k,l}$ is a Hilbert space when endowed with the sesquilinear form
\begin{align*}
\langle u,v\rangle_{Q^{s}_{k,l}}
:=
\bigl\langle (\mathcal{H}_{k,l})^{\frac{s}{2}}u,\,
(\mathcal{H}_{k,l})^{\frac{s}{2}}v
\bigr\rangle_{L^{2}(\mathbb{R}^{d})}.
\end{align*}
Moreover, the dual space of $Q^{s}_{k,l}$ can be identified with $Q^{-s}_{k,l}$ via the $L^{2}$ pairing.

The proof of the following lemma follows verbatim from that of Lemma~4.4.19 in \cite{MR4201879}, upon replacing the standard weight by the modulation weight $v_{s}$ and using the embedding and duality properties of the spaces $Q^{s}_{k,l}$. We therefore omit the proof.

\begin{lemma}
\label{anharmonicsobolevspace}
For every $s\in\mathbb{R}$, 
\[
\mathcal{M}^{2,2}_{s}=Q^{s}_{k,l}
\]
holds with equivalence of norms.
\end{lemma}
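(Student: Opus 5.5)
The identification $\mathcal{M}^{2,2}_{s}=Q^{s}_{k,l}$ will be obtained by expressing both norms as $L^2$ norms of a single pseudodifferential operator applied to $u$. This is possible because both are governed by the anisotropic weight $v(x,\xi)=q_1+V(x)^{1/2}+|\xi|^l$.

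\textbf{Step 1: the modulation side.} First I would rewrite the $\mathcal{M}^{2,2}_s$ norm. Since $V_g$ (with $\|g\|_{L^2}=1$) is an isometry $L^2(\mathbb{R}^d)\to L^2(\mathbb{R}^{2d})$, we have
\[
\|u\|_{\mathcal{M}^{2,2}_s}=\|V_gu\,v^s\|_{L^2(\mathbb{R}^{2d})}.
\]
I would then show that $v^s$ is a symbol in $\Sigma^{s}_{k,l}=S(M^{s/2},g)$, up to a harmless smoothing of $V^{1/2}$ and $|\xi|^l$ near the origin. Using Lemma 5.4 of \cite{MR4489248} together with the symbolic calculus, I would obtain
\[
\|u\|_{\mathcal{M}^{2,2}_s}\asymp \|(v^s)^w(x,D)u\|_{L^2}.
\]
This is the standard Shubin-type argument of \cite{MR4201879}, Lemma 4.4.19. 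In it one writes $\|V_gu\,v^s\|_{L^2}^2=\langle A_{v^{2s}}u,u\rangle$, where $A_{v^{2s}}$ is the anti-Wick (localisation) operator with symbol $v^{2s}$. The anti-Wick symbol differs from the Weyl symbol $v^{2s}$ by a term of lower order in $S(M^{s}h_g,g)$, and $h_g$ decays like $v^{-(k+l)/(kl)}$. A parametrix argument in the Weyl–Hörmander calculus then gives two-sided bounds, provided $A_{v^{2s}}$ is elliptic in $S(M^{s},g)$.

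\textbf{Step 2: the Sobolev side.} Here I need that $\mathcal{H}_{k,l}^{s/2}$ is a pseudodifferential operator with symbol in $S(M^{s/2},g)$ that is elliptic. This is the fractional-power functional calculus for globally elliptic operators in a Hörmander metric, cf.\ \cite{MR2668420} and \cite{MR4489248}, which applies because $\mathcal{H}_{k,l}$ is elliptic in $S(M,g)$ and strictly positive. Then $\mathcal{H}_{k,l}^{s/2}$ and $(v^s)^w$ are both elliptic of the same order $M^{s/2}$. Hence
\[
\mathcal{H}_{k,l}^{s/2}=B\,(v^s)^w+R, \qquad (v^s)^w=B'\,\mathcal{H}_{k,l}^{s/2}+R',
\]
where $B,B'\in S(1,g)$ and $R,R'$ are regularizing. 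By Calderón–Vaillancourt, $B$ and $B'$ are $L^2$-bounded. The remainders $R,R'$ are controlled using invertibility of $\mathcal{H}_{k,l}$ ($0\notin$ spectrum) and a standard absorption argument. This gives equivalence of norms on $\mathcal{S}$. Density of $\mathcal{S}$ in both spaces, or duality ($Q^{-s}=(Q^s)'$ and $(\mathcal{M}^{2,2}_s)'=\mathcal{M}^{2,2}_{-s}$), then covers negative $s$ and extends the equivalence to all of $\mathcal{S}'$.

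\textbf{Main obstacle.} The delicate point is Step 1, namely comparing the STFT-weighted norm with the anisotropic weight $v^s$. The Gaussian window has isotropic scale, whereas the metric $g$ is anisotropic when $k\ne l$. I would need the anti-Wick operator $A_{v^{2s}}$ to lie in $S(M^s,g)$ and be elliptic. I would try to prove this by writing its Weyl symbol as $v^{2s}*\,$Gaussian and using that $v$ is temperate and slowly varying on unit scales. Unit scales are finer than the $g$-scales $M^{1/(2k)}$ and $M^{1/(2l)}$, both of which are $\ge 1$, so each derivative of $v^{2s}*\,$Gaussian should gain the correct factor $M^{-1/(2k)}$ or $M^{-1/(2l)}$. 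This is the "replace the standard weight by $v_s$" modification of \cite{MR4201879}.
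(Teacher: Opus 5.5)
Your route is genuinely different from the paper's. The paper gives no argument of its own. It states that the proof of Lemma~4.4.19 of \cite{MR4201879} (the isotropic Shubin--Sobolev case) carries over verbatim with $v_s$ in place of the standard weight, using the embedding and duality properties of $Q^s_{k,l}$. You instead write $\|u\|^2_{\mathcal{M}^{2,2}_s}=\langle Au,u\rangle$, with $A$ an anti-Wick operator, and compare $A$ with $\mathcal{H}_{k,l}^{s}$ inside the anisotropic Weyl--H\"ormander calculus. This strategy is sound, self-contained given the calculus, and covers all $s$ at once without duality. It also isolates exactly where the anisotropy enters. Since $g\le dx^2+d\xi^2\le g^\sigma$, convolving an element of $S(M^s,g)$ with the unit-scale Gaussian stays in $S(M^s,g)$, and positivity together with flat temperance gives ellipticity. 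The paper's version is a one-line reduction that rests entirely on the cited isotropic argument.

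Two points need repair.

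\emph{The weight is not a symbol.} $v^s$ is not itself in $\Sigma^s_{k,l}$, and smoothing near the origin does not fix this. When $V^{1/2}$ is not a polynomial, $\partial_x^\beta v^s$ with $|\beta|>k$ has size $v^{s-1}$ on $\{|x|\sim1\}$. But $\Sigma^s_{k,l}$ demands $O(v^{s-|\beta|/k})$ as $|\xi|\to\infty$; the same failure occurs for $|\xi|^l$ with $l$ odd. Use the equivalent weight $M^{s/2}=(q_1+V+|\xi|^{2l})^{s/2}$ instead, so $A$ should have symbol $M^s$. It lies in $S(M^{s/2},g)$ because $V$, being smooth and homogeneous of degree $2k$, is a polynomial, and equivalent weights define the same space. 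A minor side remark: because the window is isotropic, $M^s*\Gamma-M^s$ gains only $M^{-1/\max\{k,l\}}$, not $h_g$. This is harmless.

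\emph{The remainders are not actually absorbed.} This is the more substantive point. A parametrix yields your equivalences only modulo smoothing operators. The intermediate quantity $\|(v^s)^w u\|_{L^2}$ is not known to be a norm, because nothing guarantees that $(v^s)^w$ is injective (Weyl quantization does not preserve positivity). So the two-sided bound in Step~1 and the absorption of $R$ in Step~2 are not justified as stated; invertibility of $\mathcal{H}_{k,l}$ only handles $R'$.

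The fix is to drop the intermediate operator and compare directly. Set $T:=\mathcal{H}_{k,l}^{-s/2}A\,\mathcal{H}_{k,l}^{-s/2}\in\mathrm{Op}\,S(1,g)$. Then
\[
\langle Au,u\rangle=\langle T\mathcal{H}_{k,l}^{s/2}u,\mathcal{H}_{k,l}^{s/2}u\rangle\le\|T\|\,\|u\|^2_{Q^s_{k,l}} .
\]
For the converse, take a parametrix with $EA=I+R_1$ and set $P:=\mathcal{H}_{k,l}^{s/2}E$. Then $\mathcal{H}_{k,l}^{s/2}u=PAu-\mathcal{H}_{k,l}^{s/2}R_1u$.
\begin{itemize}
\item Cauchy--Schwarz for the positive form $\langle A\cdot,\cdot\rangle$ gives $\|PAu\|_{L^2}^2\le\|PAP^*\|\,\langle Au,u\rangle$, with $PAP^*\in\mathrm{Op}\,S(1,g)$.
\item $\|\mathcal{H}_{k,l}^{s/2}R_1u\|_{L^2}\lesssim\|u\|_{\mathcal{M}^{2,2}_{-N}}\lesssim\|u\|_{\mathcal{M}^{2,2}_s}$ for $N\ge -s$, because $\mathcal{H}_{k,l}^{s/2}R_1$ has a Schwartz kernel.
\end{itemize}
Invertibility of $\mathcal{H}_{k,l}$ carries one direction and positivity of $A$ the other. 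Your density argument then finishes the proof.
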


As a consequence of Lemma~\ref{anharmonicsobolevspace}, together with the inclusion relations of Lemma~\ref{Inclusionrelation} and Hölder’s inequality, we obtain that for every $0<p,q\le\infty$ and for $|s|$ sufficiently large,
\begin{align*}
Q^{s}_{k,l}
\hookrightarrow
\mathcal{M}^{p,p}
\hookrightarrow
\mathcal{M}^{\infty,\infty}
\hookrightarrow
Q^{-s}_{k,l}.
\end{align*}

Recall that $\mathcal{M}^{p,q}$ denotes the unweighted modulation space $\mathcal{M}^{p,q}_{s}$ with $s=0$. For nonzero weights, the following refined embedding result holds.

\begin{lemma}
For suitably large parameters $s_{0}>s>0$, the following chain of continuous embeddings is valid:
\begin{align}
\label{chain1}
Q^{s_{0}}_{k,l}
\hookrightarrow
\mathcal{M}^{p,q}_{s}
\hookrightarrow
\mathcal{M}^{\infty,\infty}_{s}
\hookrightarrow
Q^{-s_{0}}_{k,l}.
\end{align}
\end{lemma}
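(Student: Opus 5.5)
The plan is to route all three embeddings through Lemma~\ref{anharmonicsobolevspace}, which identifies $Q^{s_0}_{k,l}=\mathcal{M}^{2,2}_{s_0}$ and $Q^{-s_0}_{k,l}=\mathcal{M}^{2,2}_{-s_0}$. This reduces \eqref{chain1} to the chain
\[
\mathcal{M}^{2,2}_{s_0}\hookrightarrow \mathcal{M}^{p,q}_{s}\hookrightarrow \mathcal{M}^{\infty,\infty}_{s}\hookrightarrow \mathcal{M}^{2,2}_{-s_0},
\]
where every space is defined with respect to the weight $v_s=(q_1+V(x)^{1/2}+|\xi|^l)^s$. Since $V$ is strictly positive and homogeneous of degree $2k$, we have $c|x|^k\le V(x)^{1/2}\le C|x|^k$, so
\[
v_1(x,\xi)\asymp 1+|x|^k+|\xi|^l .
\]
Consequently
\[
\int_{\mathbb{R}^{2d}} v_{-r}(x,\xi)\,dx\,d\xi<\infty \quad\text{as soon as } r>\frac{d}{k}+\frac{d}{l}.
\]
Indeed, the integral is controlled by $\int (1+|x|^k)^{-r/2}(1+|\xi|^l)^{-r/2}\,dx\,d\xi$ using $1+a+b\ge (1+a)^{1/2}(1+b)^{1/2}$. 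The mixed-norm version, integrating in $x$ and then in $\xi$, is handled the same way. This integrability is the only quantitative input needed.

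\textbf{Middle embedding.} The step $\mathcal{M}^{p,q}_s\hookrightarrow\mathcal{M}^{\infty,\infty}_s$ is the standard inclusion for modulation spaces (Lemma~\ref{Inclusionrelation}(2), valid for any moderate weight). For $0<p,q<1$ it also holds, using the local $\ell^p\subset\ell^\infty$ property of the STFT. This works because $V_gf$ is a function of exponential type: the sub-mean-value inequality $|V_gf(z)|\lesssim \|V_gf\cdot \chi_{B(z,1)}\|_{L^{r}}$ holds for every $r>0$, and $v_s$ is essentially constant on unit balls because it is $v$-moderate.

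\textbf{First embedding.} For $Q^{s_0}_{k,l}=\mathcal{M}^{2,2}_{s_0}\hookrightarrow\mathcal{M}^{p,q}_s$ I split into two cases.
\begin{itemize}
\item If $p,q\ge2$, we simply use $v_{s_0}\ge v_s$ together with the inclusion $\ell^2\subset\ell^p$ at the level of the STFT, exactly as in the middle step.
\item If $p<2$ or $q<2$, we write $V_gf\,v_s=(V_gf\,v_{s_0})\cdot v_{s-s_0}$ and apply Hölder's inequality in mixed norms:
\[
\|V_gf\,v_s\|_{L^{p,q}}\le \|V_gf\,v_{s_0}\|_{L^{2,2}}\,\|v_{s-s_0}\|_{L^{r_1,r_2}},\qquad \tfrac1p=\tfrac12+\tfrac1{r_1},\quad \tfrac1q=\tfrac12+\tfrac1{r_2}.
\]
The last factor is finite once $s_0-s$ is large compared with $d/k+d/l$ times the reciprocals of $r_1,r_2$.
\end{itemize}
For quasi-norms ($p$ or $q<1$) the same Hölder inequality remains valid. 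Note that it is the integrability of $v_{s-s_0}$, not a Banach-space structure, that is used.

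\textbf{Last embedding.} For $\mathcal{M}^{\infty,\infty}_s\hookrightarrow\mathcal{M}^{2,2}_{-s_0}$ we estimate
\[
\|V_gf\,v_{-s_0}\|_{L^2}^2\le \|V_gf\,v_s\|_{L^\infty}^2\int v_{-2(s+s_0)}\,dx\,d\xi .
\]
The integral is finite when $2(s+s_0)>d/k+d/l$, and this is automatic once $s_0$ is large. Translating back via Lemma~\ref{anharmonicsobolevspace} gives $Q^{-s_0}_{k,l}$.

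\textbf{Main obstacle.} I expect the main difficulty to be bookkeeping rather than any deep step. Two points need care. First, Lemma~\ref{anharmonicsobolevspace} and Lemma~\ref{Inclusionrelation} must be applied with the anisotropic weight $v_s$ rather than the standard polynomial weight; this requires checking that $v$ is submultiplicative, which is \eqref{eq:submultiplicative-weight}, so that window-independence and the local sub-mean-value estimate still hold. Second, the quasi-Banach range $p,q<1$ needs the exponential-type argument above. I would make the required size of $s_0$ explicit, for instance $s_0>s+\frac{d}{k}+\frac{d}{l}$ (adjusted by $1/p,1/q$ in the Hölder step), which justifies the phrase ``suitably large''.
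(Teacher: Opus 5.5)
Your argument is essentially the paper's. Both identify $Q^{\pm s_0}_{k,l}$ with $\mathcal{M}^{2,2}_{\pm s_0}$ via Lemma~\ref{anharmonicsobolevspace}, use the inclusion relations when the exponents are at least $2$, and use H\"older's inequality plus integrability of $v_{s-s_0}$ otherwise. Your direct estimate for the last embedding is equivalent to the paper's detour through the unweighted $\mathcal{M}^{\infty,\infty}$.

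There is one bookkeeping slip, in the mixed case $p<2<q$ (or $q<2<p$). There the relation $\frac1q=\frac12+\frac1{r_2}$ has no admissible $r_2$, because H\"older cannot raise an exponent from $2$ to $q>2$. The fix is to apply H\"older only with the exponents $\min\{p,2\},\min\{q,2\}$, which embeds $\mathcal{M}^{2,2}_{s_0}$ into $\mathcal{M}^{\min\{p,2\},\min\{q,2\}}_{s}$, and then use the STFT-level inclusion. The paper does the equivalent thing by reducing to the diagonal space $\mathcal{M}^{m,m}_s$ with $m=\min\{p,q\}$.

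A smaller point: the split $1+a+b\ge(1+a)^{1/2}(1+b)^{1/2}$ only gives integrability for $r>2d/\min\{k,l\}$, not for $r>\frac dk+\frac dl$. A weighted split $(1+a)^{\theta}(1+b)^{1-\theta}$ gives the sharp threshold, though only largeness of $s_0$ matters here.
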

\begin{proof}
We first consider the case $p,q\ge 2$. By the inclusion relation stated in Lemma~\ref{Inclusionrelation}, for parameters $s_{0}>s>0$ one has the continuous embedding
\[
\mathcal{M}^{2,2}_{s_{0}}\hookrightarrow \mathcal{M}^{p,q}_{s}.
\]
Since $\mathcal{M}^{2,2}_{s_{0}}=Q^{s_{0}}_{k,l}$ by Lemma~\ref{anharmonicsobolevspace}, it follows that
\begin{align}
\label{chain}
Q^{s_{0}}_{k,l}
=\mathcal{M}^{2,2}_{s_{0}}
\hookrightarrow
\mathcal{M}^{p,q}_{s}
\hookrightarrow
\mathcal{M}^{\infty,\infty}_{s}
\hookrightarrow
\mathcal{M}^{\infty,\infty}
\hookrightarrow
Q^{-s_{0}}_{k,l}.
\end{align}
This establishes \eqref{chain1} in the case $p,q\ge 2$.

We now turn to the case $0<p,q\le 2$. Without loss of generality, we may assume $p\le q$. In view of the well-known embedding
\[
\mathcal{M}^{p,p}_{s}\hookrightarrow \mathcal{M}^{p,q}_{s}, \qquad q\ge p,
\]
it suffices to prove that
\[
\mathcal{M}^{2,2}_{s_{0}}\hookrightarrow \mathcal{M}^{p,p}_{s}.
\]
Let $f\in \mathcal{M}^{2,2}_{s_{0}}$. Then, using the definition of modulation spaces and Hölder’s inequality, we obtain
\begin{align*}
\|V_{g}f\, v_{s}\|_{L^{p,p}}
&=
\|V_{g}f\, v_{s_{0}}\, v_{s-s_{0}}\|_{L^{p,p}} \\
&\le
\|V_{g}f\, v_{s_{0}}\|_{L^{2,2}}
\,
\|v_{s-s_{0}}\|_{L^{r,r}},
\end{align*}
where the exponents satisfy $\frac{1}{p}=\frac{1}{2}+\frac{1}{r}$. Choosing $s_{0}$ sufficiently large ensures that $v_{s-s_{0}}\in L^{r,r}$, and hence the embedding $\mathcal{M}^{2,2}_{s_{0}}\hookrightarrow \mathcal{M}^{p,p}_{s}$ follows.

Combining the above arguments, we conclude that the chain of embeddings \eqref{chain} holds in all cases, which proves \eqref{chain1}.
\end{proof}

We now state two key results that form the backbone of the analysis in this section. The first establishes boundedness of 
$t$-quantised operators associated with symbols in the anharmonic classes introduced above. This result is a direct consequence of Theorem~3.1 in \cite{MR3636061}, and its proof follows the same line of argument as Theorem~3.2 in \cite{MR4944933}. We therefore state it without proof.

\begin{theorem}
\label{quantisationtheorem}
Let $m\in\mathbb{R}$, $a\in\Sigma^{ -m }_{k,l}$, $0<p,q\leq \infty$, and $t\in\mathbb{R}$. Then the $t$-quantised operator $a_t(x,D)$ extends to a bounded operator
\[
a_t(x,D):\mathcal{M}^{p,q}\longrightarrow \mathcal{M}^{p,q}_{m},
\]
with operator norm depending only on finitely many seminorms of $a$ in $\Sigma^{ -m }_{k,l}$.
\end{theorem}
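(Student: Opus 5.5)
The plan is to deduce the statement from the weighted boundedness theorem for pseudodifferential operators with symbols in weighted $S^0_{0,0}$-type classes (Theorem~3.1 in \cite{MR3636061}), following the scheme of Theorem~3.2 in \cite{MR4944933}. That result says, roughly, the following: if $\mu$ and $\omega$ are polynomially moderate weights on $\mathbb{R}^{2d}$ and $a$ satisfies
\[
|\partial^{\gamma}_{z} a(z)|\le C_\gamma\,\omega(z),\qquad z=(x,\xi)\in\mathbb{R}^{2d},\ |\gamma|\le N,
\]
for $N$ large enough (depending on $p,q$ when $0<p,q<1$), then $a_t(x,D)$ is bounded from $\mathcal{M}^{p,q}_{\mu}$ to $\mathcal{M}^{p,q}_{\mu/\omega}$ for every $t\in\mathbb{R}$ and all $0<p,q\le\infty$. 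The operator norm is controlled by finitely many of the above seminorms. I will apply this with $\mu\equiv1$ and $\omega=\widetilde v^{-m}$, where $\widetilde v=q_1+V(x)^{1/2}+|\xi|^l$. Then $\mu/\omega=\widetilde v^{\,m}=v_m$, which is exactly the weight defining $\mathcal{M}^{p,q}_m$ in this paper.

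\textbf{Step 1: symbol estimate.} If $a\in\Sigma^{-m}_{k,l}$, then by \eqref{symbolclassdef}
\[
|\partial_x^\beta\partial_\xi^\alpha a(x,\xi)|\le C_{\alpha,\beta}\,\widetilde v(x,\xi)^{-m-\frac{|\beta|}{k}-\frac{|\alpha|}{l}}\le C_{\alpha,\beta}\,q_1^{-\frac{|\beta|}{k}-\frac{|\alpha|}{l}}\,\widetilde v(x,\xi)^{-m},
\]
because $\widetilde v\ge q_1>0$. So $a$ lies in the weighted class with weight $\omega=\widetilde v^{-m}$. Its seminorms there are bounded by finitely many $\Sigma^{-m}_{k,l}$-seminorms, which gives the claimed dependence of the operator norm.

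\textbf{Step 2: moderateness of $\widetilde v^{\pm m}$.} By \eqref{eq:submultiplicative-weight}, $v=C\widetilde v$ is submultiplicative. Hence $\widetilde v^{\,r}$ is $v^{|r|}$-moderate for every real $r$. For $r<0$ this follows from
\[
\widetilde v(z)\le v(z-w)\,\widetilde v(w)
\]
with the roles of $z$ and $w$ exchanged. Moreover $v$ has polynomial growth, since $V(x)^{1/2}\lesssim(1+|x|)^k$. So both $\omega=\widetilde v^{-m}$ and $\mu/\omega=\widetilde v^{\,m}$ are polynomially moderate, as the cited theorem requires.

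\textbf{Step 3: arbitrary $t$.} If the cited theorem is stated only for the Kohn--Nirenberg or Weyl quantisation, I will pass to general $t$ using $a_t(x,D)=b_{s}(x,D)$ with $b=e^{i(t-s)\langle D_x,D_\xi\rangle}a$. The operator $e^{i(t-s)\langle D_x,D_\xi\rangle}$ maps the weighted class $\{\,|\partial^\gamma b|\lesssim\omega\,\}$ to itself when $\omega$ is moderate (a standard result, cf.\ Toft \cite{Toft04}). Then I apply the theorem to $b$.

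\textbf{Main obstacle.} The step I expect to need the most care is the quasi-Banach range $0<p<1$ or $0<q<1$. There the number of derivatives required, and the moderateness constants, must be those admissible in \cite{MR3636061}. I will check that our weight $v$ indeed fits the moderateness hypotheses of Theorem~3.1 there (rather than only the standard $\langle z\rangle^s$ weights). If necessary, I will instead use the equivalence
\[
\widetilde v(x,\xi)\asymp 1+|x|^k+|\xi|^l
\]
and argue with this explicit form.
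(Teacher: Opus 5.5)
Your proposal is correct and follows essentially the same route as the paper, which gives no argument beyond invoking Theorem~3.1 of \cite{MR3636061} along the lines of Theorem~3.2 in \cite{MR4944933}. Your Steps~1--2 supply the verification the paper leaves implicit: $\Sigma^{-m}_{k,l}$ embeds in the class $|\partial^\gamma a|\lesssim \widetilde v^{-m}$, and $\widetilde v^{\pm m}$ is polynomially moderate, so the target weight is $\widetilde v^{\,m}$, matching the paper's convention for $\mathcal{M}^{p,q}_m$.
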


The following result shows that fractional powers of the generalised anharmonic oscillator admit a pseudodifferential representation with a positive elliptic symbol adapted to the anharmonic geometry. Since the argument is analogous to the proof of  \cite[Proposition~2.3]{MR4313961} and \cite[Theorem 3.3]{MR4944933}, with only minor modifications in our setting, we omit the details.

\begin{theorem}
\label{Calderontheorem}
Let $\beta>0,$ $V(x)$ be a strictly positive homogeneous potential of polynomial growth of
order $2k$ and $k,l\ge 1$ be integers. Then the fractional anharmonic oscillator
\[
\mathcal{H}_{k,l}^{\beta}
:=
\bigl((-\Delta)^{l}+V(x)\bigr)^{\beta}
\]
is a pseudodifferential operator whose Weyl symbol $\Hd^{\beta}(x,\xi)$ is real-valued and belongs to the class $\Sigma^{2\beta}_{k,l}$. More precisely, the symbol admits the asymptotic representation
\begin{equation}
\label{assymtotic}
\mathcal{H}_{k,l}^{\beta}(x,\xi)
=
\bigl(|\xi|^{2l}+V(x)\bigr)^{\beta}
+
r(x,\xi),
\qquad
|\xi|^{l}+|x|^{k}\ge 1,
\end{equation}
where the remainder satisfies
\[
r \in \Sigma^{\,2\beta-\frac{k+l}{kl}}_{k,l}.
\]
\end{theorem}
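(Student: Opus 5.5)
We will prove Theorem~\ref{Calderontheorem} in the Weyl--H\"ormander framework for the metric $g=g^{k,l}$, following the strategy of \cite[Proposition~2.3]{MR4313961}. Write $a(x,\xi):=|\xi|^{2l}+V(x)$ for the Weyl symbol of $\Hd$. It is real, lies in $\Sigma^{2}_{k,l}=S(M,g)$, and is elliptic there. For $|\xi|^l+|x|^k\ge1$ we also have $a\simeq(q_1+V^{1/2}+|\xi|^l)^2$.

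\textbf{Step 1: the principal part $a^{\beta}$.} We first show that $\chi a^{\beta}\in\Sigma^{2\beta}_{k,l}$, where $\chi$ is a cutoff supported in $|\xi|^l+|x|^k\ge1$ and equal to $1$ near infinity. This follows from the Fa\`a di Bruno formula. A derivative $\partial_x^{\gamma}\partial_\xi^{\alpha}a^\beta$ is a sum of terms
\[
a^{\beta-j}\prod_{i=1}^{j}\partial_x^{\gamma_i}\partial_\xi^{\alpha_i}a .
\]
Each factor is bounded by $\widetilde v^{\,2-|\gamma_i|/k-|\alpha_i|/l}$, so the whole term is bounded by $\widetilde v^{\,2\beta-|\gamma|/k-|\alpha|/l}$. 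The homogeneity of $V$ is what gives the estimate on $\partial_x^\gamma V$.

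\textbf{Step 2: a parametrix for the resolvent.} Since $\Hd$ is invertible with discrete positive spectrum, we write $\Hd^{\beta}=\Hd^{N}\Hd^{\beta-N}$ for an integer $N>\beta$. This reduces the problem to negative powers. Using the Balakrishnan formula,
\[
\Hd^{-z}=\frac{\sin\pi z}{\pi}\int_0^\infty\lambda^{-z}(\Hd+\lambda)^{-1}\,d\lambda,\qquad 0<z<1,
\]
it suffices to build the Weyl symbol of $(\Hd+\lambda)^{-1}$ with bounds uniform in $\lambda\ge0$. Set $b_0=(a+\lambda)^{-1}$. We then construct $b_j$ recursively from the Weyl composition formula $a\#b\sim\sum_j$, where each term gains a factor $h_g=\lambda_g^{-1}\simeq\widetilde v^{-(k+l)/(kl)}$. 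The resulting $b_j$ lie in $S((a+\lambda)^{-1}h_g^{j},g)$ uniformly in $\lambda$.

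The exact resolvent differs from the parametrix $\sum_{j<J}b_j^w$ by a remainder. We control that remainder by the Beals characterisation (or Theorem~4.2.9-type results of \cite{MR2668420}) together with $L^2$ bounds $\|(\Hd+\lambda)^{-1}\|\le(\lambda_0+\lambda)^{-1}$.

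\textbf{Step 3: integrating in $\lambda$.} Integrating term by term gives
\[
\int_0^\infty\lambda^{-z}(a+\lambda)^{-1}\,d\lambda=\frac{\pi}{\sin\pi z}\,a^{-z}.
\]
The corrections integrate to symbols in $\Sigma^{-2z-(k+l)/(kl)}_{k,l}$. Composing with $a^{\#N}$, which is $a^N$ modulo $\Sigma^{2N-2(k+l)/(kl)}_{k,l}$, again via the Weyl calculus, yields the expansion \eqref{assymtotic} with $r\in\Sigma^{2\beta-\frac{k+l}{kl}}_{k,l}$. In fact the first odd-order Weyl correction vanishes, so the gain is even better, but one step suffices.

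Real-valuedness of the symbol follows from the self-adjointness of $\Hd^{\beta}$, because Weyl quantisation sends self-adjoint operators to real symbols.

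\textbf{Main obstacle.} The main difficulty is the uniformity in $\lambda$ of the parametrix remainder estimates in Step~2. The integral over $\lambda\in(0,\infty)$ converges only if every seminorm carries a factor $(1+\lambda)^{-1}$. Our first approach would be to treat $\lambda$ as an extra phase-space variable: we would show that $M_\lambda=a+\lambda$ is a $g$-weight with constants independent of $\lambda$. This follows from the $g$-weight property of $M$, since $g$-temperateness and continuity are preserved under adding a positive constant. The uniform bounds then come from the composition theorem applied with uniform seminorms.
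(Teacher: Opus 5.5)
Your outline is correct, but it works at a different level from the paper. The paper gives no argument of its own here: it defers to \cite[Proposition~2.3]{MR4313961} and \cite[Theorem~3.3]{MR4944933}, and hence ultimately to the known functional calculus for positive elliptic operators in the relevant Weyl--H\"ormander class. On that route one only has to check that $g=g^{k,l}$ is a H\"ormander metric, that $M$ is a $g$-weight, and that $a=|\xi|^{2l}+V(x)$ is real and $g$-elliptic.

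Your Balakrishnan formula combined with a $\lambda$-uniform resolvent parametrix is essentially the standard proof of that black box, carried out for $g^{k,l}$. The citation buys brevity and hands off the delicate uniform-in-$\lambda$ remainder analysis. Your route is self-contained and also gives more. Since $\{a,F(a)\}=0$, every first-order Weyl correction you meet vanishes: $b_1=0$ outside a compact set, $a^{\#N}-a^{N}\in S(M^{N}h_g^{2},g)$, and $a^{N}\#a^{-z}-a^{\beta}\in S(M^{\beta}h_g^{2},g)$ (with the usual cutoff near the origin). So in fact $r\in\Sigma^{2\beta-2(k+l)/(kl)}_{k,l}$. As a check, for $k=l=1$, $V=|x|^2$, $\beta=2$ the Weyl symbol is exactly $(|x|^2+|\xi|^2)^2-d$.

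Two steps need tightening.

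\emph{The weight near the origin.} Since $a(0,0)=0$, $a+\lambda$ is not a weight, and $b_0=(a+\lambda)^{-1}$ is singular at the origin as $\lambda\to0$. Use $M+\lambda$ as the $\lambda$-uniform $g$-weight; this is what your ``adding a constant'' argument actually proves. In $b_0$, replace $a$ by a strictly positive symbol equal to $a$ outside a compact set.

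\emph{The exact resolvent.} The composition theorem controls the $b_j$ and the parametrix errors, but not the exact resolvent. Theorem~4.2.9 of \cite{MR2668420}, which the paper uses only for discreteness of the spectrum, does not supply this either. Take parametrices with $B^{L}_J(\Hd+\lambda)=I-R^{L}_J$ and $(\Hd+\lambda)B^{R}_J=I-R^{R}_J$, and write
\[
(\Hd+\lambda)^{-1}=B^{R}_J+B^{L}_JR^{R}_J+R^{L}_J(\Hd+\lambda)^{-1}R^{R}_J .
\]
The middle term has symbol in $S((M+\lambda)^{-1}h_g^{J},g)$ uniformly in $\lambda$. For the last term, $(\Hd+\lambda)^{-1}$ acts on the $j$-th eigenspace by $(\lambda_j+\lambda)^{-1}$, so its norm on every $Q^{s}_{k,l}$ is at most $(\lambda_0+\lambda)^{-1}$. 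Hence, for $J$ large, the last term maps $Q^{-N}_{k,l}\to Q^{N}_{k,l}$ with norm $\lesssim(\lambda_0+\lambda)^{-1}$. Its Weyl symbol is therefore rapidly decreasing to any prescribed order, with seminorms $O((1+\lambda)^{-1})$, which is integrable against $\lambda^{-z}$. This removes the need to invoke Beals' characterisation with $\lambda$-uniform constants.
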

As mentioned above, for $\beta>0$ the operator $\mathcal{H}_{k,l}^{\beta}$ is a pseudodifferential operator with a positive Weyl symbol in the class $S(M^{\beta};\Phi,\Psi)$, and its spectrum consists of a discrete sequence of eigenvalues diverging to $+\infty$. The corresponding eigenfunctions $\{\Phi_{j}\}_{j\ge0}$ form an orthonormal basis of $L^{2}(\mathbb{R}^{d})$.

For $t\ge0$, we define the semigroup generated by $\mathcal{H}_{k,l}^{\beta}$ via the spectral theorem by
\[
e^{-t\mathcal{H}_{k,l}^{\beta}}f
=
\sum_{j=0}^{\infty}e^{-t\lambda_{j}^{\beta}}P_{j}f.
\]
In view of Theorem~4.5.1 in \cite{MR2668420}, the operator $e^{-t\mathcal{H}_{k,l}^{\beta}}$ is itself a pseudodifferential operator whose Weyl symbol satisfies a family of uniform estimates. These estimates play a crucial role in the boundedness and smoothing results established in the next theorem.

We are now in a position to state the main result of this section, which establishes precise smoothing and boundedness properties for the fractional anharmonic heat semigroup on weighted modulation spaces.

\begin{theorem}
\label{BoundofH}
Let $\beta>0$, $0<p_{1},p_{2},q_{1},q_{2}\leq\infty$, $s_{1}>0$, and $s_{2}\in\mathbb{R}$. Define
\begin{align*}
\frac{1}{\widetilde{p}}
:=
\max\Bigl\{\frac{1}{p_{2}}-\frac{1}{p_{1}},\,0\Bigr\},
\qquad
\frac{1}{\widetilde{q}}
:=
\max\Bigl\{\frac{1}{q_{2}}-\frac{1}{q_{1}},\,0\Bigr\},
\end{align*}
and set
\begin{align}
\label{sigma}
\sigma
:=
\frac{d}{2\beta}
\left(
\frac{1}{k\,\widetilde{p}}
+
\frac{1}{l\,\widetilde{q}}
\right).
\end{align}
Then, for all $0<t\leq1$, the fractional anharmonic heat semigroup satisfies the estimate
\begin{equation}
\label{maininequality}
\bigl\|e^{-t\mathcal{H}_{k,l}^{\beta}}f\bigr\|_{\mathcal{M}^{p_{2},q_{2}}_{s_{2}}}
\le
C(t)\,
\|f\|_{\mathcal{M}^{p_{1},q_{1}}_{s_{1}}},
\end{equation}
where
\begin{equation}
\label{firstconstant}
C(t)=C_{0}\,t^{-\sigma}
\end{equation}
for some constant $C_{0}>0$ independent of $t$ and $f$.

In particular, when $s_{1}=s_{2}=s\ge0$, the estimate improves to
\begin{equation}
\label{secondconstant}
C(t)=
\begin{cases}
C_{0}\,t^{-\sigma}, & 0<t\le1,\\[0.3em]
C_{0}\,e^{-t\lambda_{0}^{\beta}}, & t\ge1,
\end{cases}
\end{equation}
where $\lambda_{0}>0$ denotes the smallest eigenvalue of the anharmonic oscillator $\mathcal{H}_{k,l}$.
\end{theorem}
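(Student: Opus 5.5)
The plan follows \cite{MR4313961,MR4944933}: realise $e^{-t\Hd^{\beta}}$ as a pseudodifferential operator with a $t$-dependent Weyl symbol whose seminorms can be tracked, and then feed this into Theorem~\ref{quantisationtheorem} together with Hölder-type embeddings between weighted modulation spaces.

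\textbf{Step 1: symbol of the semigroup.} By Theorem~\ref{Calderontheorem}, $\Hd^{\beta}$ has a real Weyl symbol $a_\beta\in\Sigma^{2\beta}_{k,l}$. Its principal part is $(|\xi|^{2l}+V(x))^{\beta}$, which is elliptic, so $a_\beta\gtrsim \widetilde v^{2\beta}$ for large $(x,\xi)$. Theorem~4.5.1 of \cite{MR2668420} then gives that $e^{-t\Hd^{\beta}}$ has a Weyl symbol $b_t$. For $0<t\le1$ I would aim for the uniform bound
\[
|\partial_x^{\gamma}\partial_\xi^{\alpha} b_t(x,\xi)|\le C_{\alpha,\gamma,N}\,(1+t\,\widetilde v(x,\xi)^{2\beta})^{-N}\,\widetilde v(x,\xi)^{-\frac{|\gamma|}{k}-\frac{|\alpha|}{l}},
\]
for every $N\ge0$. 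The bound follows the pattern of $e^{-ta_\beta}$: each derivative brings down a factor $t\,\partial a_\beta$, and this factor is absorbed by the decay of $e^{-ta_\beta}$. Because $t^{N}\widetilde v^{2\beta N}\le C$ on the region where this decay is effective, we get $t^{\mu/(2\beta)}b_t\in\Sigma^{-\mu}_{k,l}$ for every $\mu\ge0$, with seminorms uniform in $t\in(0,1]$. The Planck constant $h_g=\widetilde v^{-(k+l)/(kl)}$ controls the remainder terms of the parametrix construction, and the uniformity in $t$ of these remainders is the point I expect to be the main obstacle. My first attempt would be to check it by an iterative ansatz $b_t=e^{-ta_\beta}(1+\sum_j t^{j}c_j)$, using the composition formula in $S(M;\Phi,\Psi)$.

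\textbf{Step 2: boundedness with loss $t^{-\sigma}$.} Fix $\mu$ large. By Theorem~\ref{quantisationtheorem}, together with conjugation by $\widetilde v^{s}$-type operators (Weyl quantisations of $\widetilde v^{\pm s}\in\Sigma^{\pm s}_{k,l}$, whose composition stays in the class), we get
\[
\|e^{-t\Hd^\beta}f\|_{\mathcal M^{p_1,q_1}_{s_1+\mu}}\lesssim t^{-\mu/(2\beta)}\|f\|_{\mathcal M^{p_1,q_1}_{s_1}}.
\]
If $p_2\ge p_1$ and $q_2\ge q_1$, the inclusion relations of Lemma~\ref{Inclusionrelation} (extended to quasi-Banach indices) already give the result with $\sigma=0$, since $s_1>0$ and we may take $\mu\ge s_2-s_1$. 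If instead $p_2<p_1$ or $q_2<q_1$, I would use Hölder's inequality in $L^{p,q}$, exactly as in the proof of \eqref{chain1}. Gaining in $x$ needs integrability of $\widetilde v^{-\mu_1}$ in $x$, which holds when $k\mu_1\widetilde p>d$; gaining in $\xi$ needs $l\mu_2\widetilde q>d$. To obtain the sharp exponent, I would instead split the weight anisotropically: use $(1+t^{1/(2\beta)}\widetilde v)^{-N}$ and estimate $\|(1+t^{1/(2\beta)}(|x|^k+|\xi|^l))^{-N}\|_{L^{\widetilde p,\widetilde q}}$ by scaling $x\mapsto t^{-1/(2\beta k)}x$ and $\xi\mapsto t^{-1/(2\beta l)}\xi$. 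This scaling produces precisely the factor $t^{-\frac{d}{2\beta}(\frac1{k\widetilde p}+\frac1{l\widetilde q})}=t^{-\sigma}$, giving \eqref{firstconstant}. The technical core of this step is to pass the symbol decay into STFT decay. For that I would work with the Gabor matrix of $b_t^w$, which is concentrated near the diagonal with weight $(1+t\widetilde v^{2\beta})^{-N}$, as in \cite{MR3636061}.

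\textbf{Step 3: large time.} For $s_1=s_2=s\ge0$ and $t\ge1$, write $e^{-t\Hd^\beta}=e^{-\frac12\Hd^\beta}e^{-(t-1)\Hd^\beta}e^{-\frac12\Hd^\beta}$. The left factor maps $Q^{-s_0}_{k,l}\to\mathcal M^{p_2,q_2}_s$, because its symbol lies in $\Sigma^{-\infty}$ and we can use \eqref{chain1}. The right factor maps $\mathcal M^{p_1,q_1}_s\to Q^{s_0}_{k,l}$ for the same reason. The middle factor is controlled by the spectral theorem on $Q^{s_0}_{k,l}$, and Lemma~\ref{anharmonicsobolevspace} gives
\[
\|e^{-(t-1)\Hd^\beta}\|_{Q^{s_0}\to Q^{s_0}}\le e^{-(t-1)\lambda_0^{\beta}}.
\]
Composing the three bounds yields $C_0e^{-t\lambda_0^\beta}$, which is \eqref{secondconstant}.
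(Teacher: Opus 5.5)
Your short-time argument follows the paper's route. The paper gets the uniform bounds on $t^{N}b_t$ in $\Sigma^{-2\beta N}_{k,l}$ directly from Theorem~4.5.1 of \cite{MR2668420}, so your iterative ansatz is not needed. It then uses Theorem~\ref{quantisationtheorem}, H\"older against $(1+t^{1/(2\beta)}(|x|^{k}+|\xi|^{l}))^{-N}$, and the anisotropic scaling that produces $t^{-\sigma}$.

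The gap is the weight bookkeeping when $s_2>s_1$. Your remark ``we may take $\mu\ge s_2-s_1$'' gives, by your own displayed estimate, $C(t)\lesssim t^{-(s_2-s_1)/(2\beta)}$, not a $t$-independent constant. Likewise, in the H\"older step the function whose $L^{\widetilde p,\widetilde q}$-norm you need is $\widetilde v^{\,s_2-s_1}(1+t^{1/(2\beta)}\widetilde v)^{-N}$, not $(1+t^{1/(2\beta)}\widetilde v)^{-N}$. For $s_2>s_1$ the factor $\widetilde v^{\,s_2-s_1}\le t^{-(s_2-s_1)/(2\beta)}(1+t^{1/(2\beta)}\widetilde v)^{s_2-s_1}$ costs exactly that power.

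This loss cannot be removed, because the statement is false for $s_2>s_1$. Take $p_i=q_i=2$, so $\sigma=0$. By Lemma~\ref{anharmonicsobolevspace}, for an eigenfunction $\Phi$ with eigenvalue $\lambda_j$,
\[
\|e^{-t\Hd^{\beta}}\Phi\|_{\mathcal{M}^{2,2}_{s_2}}\big/\|\Phi\|_{\mathcal{M}^{2,2}_{s_1}}\asymp\lambda_j^{(s_2-s_1)/2}e^{-t\lambda_j^{\beta}} .
\]
As $t\to0^{+}$ this tends to $\lambda_j^{(s_2-s_1)/2}$, so no uniform bound in $t\in(0,1]$ is possible. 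The paper's proof has the same defect: it bounds $(1+|x|^{k}+|\xi|^{l})^{s_2}$ by a constant times $(1+t^{1/(2\beta)}(|x|^{k}+|\xi|^{l}))^{s_2}$, which holds only for $s_2\le0$.

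Your argument does prove $C(t)\lesssim t^{-\sigma-\max\{s_2-s_1,0\}/(2\beta)}$. For $s_2\le s_1$ one has $\widetilde v^{\,s_2-s_1}\lesssim(1+t^{1/(2\beta)}\widetilde v)^{s_2-s_1}$, since $\widetilde v\ge q_1$ and $t\le1$, so you get exactly $t^{-\sigma}$ there. Keeping the weight $\widetilde v^{\,s_1}$ is a real improvement over the paper. The paper discards $s_1$ through $\mathcal{M}^{p_1,q_1}_{s_1}\hookrightarrow\mathcal{M}^{p_1,q_1}$, so, done correctly, it only reaches $t^{-\sigma-\max\{s_2,0\}/(2\beta)}$. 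In particular it loses $t^{-s/(2\beta)}$ in the case $s_1=s_2=s>0$ that Section~4 relies on, while your route gives $t^{-\sigma}$. Two things are needed to make this rigorous:
\begin{itemize}
\item the weighted-to-weighted form of Toft's theorem \cite{MR3636061}, since Theorem~\ref{quantisationtheorem} as stated only starts from unweighted $\mathcal{M}^{p,q}$;
\item a smooth substitute such as $(q_1+V(x)+|\xi|^{2l})^{\pm s/2}$ for $\widetilde v^{\pm s}$, which is not smooth.
\end{itemize}

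Your large-time argument is correct and differs from the paper's. The paper expands $e^{-t\Hd^{\beta}}$ in spectral projections, bounds $\|P_j\|\lesssim\lambda_j^{s_0}$ via \eqref{chain1}, and sums $\sum_j e^{-t\lambda_j^{\beta}}\lambda_j^{s_0}\lesssim e^{-t\lambda_0^{\beta}}$ using eigenvalue growth. Your factorization through $Q^{\pm s_0}_{k,l}$ needs only \eqref{chain1} and the spectral theorem. The outer factors map $Q^{-s_0}_{k,l}\to Q^{s_0}_{k,l}$ simply because $\sup_j\lambda_j^{s_0}e^{-\lambda_j^{\beta}/2}<\infty$, with no $\Sigma^{-\infty}$ calculus. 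Your route also needs no eigenvalue asymptotics. It also avoids the triangle inequality for an infinite series, which the paper uses but which fails as written when $p_2<1$ or $q_2<1$.
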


\begin{proof}
By the embedding properties of modulation spaces recalled earlier, it suffices to establish \eqref{maininequality} with $p_{2}$ replaced by $\min\{p_{1},p_{2}\}$ and $q_{2}$ replaced by $\min\{q_{1},q_{2}\}$. Hence, without loss of generality, we may assume throughout the proof that
\[
p_{2}\le p_{1}
\qquad\text{and}\qquad
q_{2}\le q_{1}.
\]
Since the operator $\mathcal{H}_{k,l}^{\beta}$, $\beta>0$, is a pseudodifferential operator with a positive, elliptic Weyl symbol belonging to the class $S(M^{\beta};\Phi,\Psi)$, its $g$-ellipticity follows directly from the asymptotic expansion \eqref{assymtotic}. Consequently, Theorem~4.5.1 of \cite{MR2668420} applies to the associated heat semigroup $e^{-t\mathcal{H}_{k,l}^{\beta}}$.

In particular, the operator $e^{-t\mathcal{H}_{k,l}^{\beta}}$ is a pseudodifferential operator with Weyl symbol $b_{t}(x,\xi)$ depending on the parameter $t$. Moreover, for every $N\ge0$, the family $\{t^{N}b_{t}\}_{t\in[0,T]}$ is uniformly bounded in the symbol class $\Sigma^{ -2\beta N }_{k,l}$ for any fixed $T>0$.

By Theorem~\ref{quantisationtheorem}, it follows that
\begin{align*}
\|e^{-t\mathcal{H}_{k,l}^{\beta}}f\|_{\mathcal{M}^{p_{1},q_{1}}}
\lesssim
\|f\|_{\mathcal{M}^{p_{1},q_{1}}}.
\end{align*}
Since $s_{1}>0$ and $\mathcal{M}^{p_{1},q_{1}}_{s_{1}}\hookrightarrow \mathcal{M}^{p_{1},q_{1}}$, we obtain
\begin{align}
\label{modulationinclusion1}
\|V_{g}(e^{-t\mathcal{H}_{k,l}^{\beta}}f)\|_{L^{p_{1},q_{1}}}
\lesssim
\|f\|_{\mathcal{M}^{p_{1},q_{1}}}
\lesssim
\|f\|_{\mathcal{M}^{p_{1},q_{1}}_{s_{1}}}.
\end{align}

Next, since $t^{N}b_{t}\in\Sigma^{ -2\beta N }_{k,l}$ for any $N\in\mathbb{N}$, an argument analogous to the one above yields
\begin{align}
\label{modulationinclusion2}
\|t^{N}v_{2\beta N}V_{g}(e^{-t\mathcal{H}_{k,l}^{\beta}}f)\|_{L^{p_{1},q_{1}}}
\lesssim
\|f\|_{\mathcal{M}^{p_{1},q_{1}}_{s_{1}}}.
\end{align}
Combining \eqref{modulationinclusion1} and \eqref{modulationinclusion2}, we infer that
\begin{align}
\label{eqn012}
\|(1+t^{N}v_{2\beta N})V_{g}(e^{-t\mathcal{H}_{k,l}^{\beta}}f)\|_{L^{p_{1},q_{1}}}
\le
C\,
\|f\|_{\mathcal{M}^{p_{1},q_{1}}_{s_{1}}},
\end{align}
where the constant $C$ is independent of $t\in(0,1]$.

Applying Hölder’s inequality, we obtain
\begin{align}
\label{eqn1234}
\begin{aligned}
\|v_{s_{2}}V_{g}(e^{-t\mathcal{H}_{k,l}^{\beta}}f)\|_{L^{p_{2},q_{2}}}
&=
\|v_{s_{2}}(1+t^{N}v_{2\beta N})^{-1}
(1+t^{N}v_{2\beta N})
V_{g}(e^{-t\mathcal{H}_{k,l}^{\beta}}f)\|_{L^{p_{2},q_{2}}} \\
&\lesssim
\|v_{s_{2}}(1+t^{N}v_{2\beta N})^{-1}\|_{L^{\widetilde{p},\widetilde{q}}}
\,
\|(1+t^{N}v_{2\beta N})
V_{g}(e^{-t\mathcal{H}_{k,l}^{\beta}}f)\|_{L^{p_{1},q_{1}}},
\end{aligned}
\end{align}
where the indices satisfy
\[
\frac{1}{p_{2}}=\frac{1}{p_{1}}+\frac{1}{\widetilde{p}},
\qquad
\frac{1}{q_{2}}=\frac{1}{q_{1}}+\frac{1}{\widetilde{q}}.
\]

Using \eqref{eqn012} in \eqref{eqn1234}, we arrive at
\begin{align}
\label{beforeenequality}
\|e^{-t\mathcal{H}_{k,l}^{\beta}}f\|_{\mathcal{M}^{p_{2},q_{2}}_{s_{2}}}
\lesssim
\|v_{s_{2}}(1+t^{N}v_{2\beta N})^{-1}\|_{L^{\widetilde{p},\widetilde{q}}}
\,
\|f\|_{\mathcal{M}^{p_{1},q_{1}}_{s_{1}}},
\end{align}
which reduces the proof to estimating the weighted $L^{\widetilde{p},\widetilde{q}}$ norm on the right-hand side.
We now estimate the quantity
\[
C(t):=\bigl\|v_{s_{2}}(1+t^{N}v_{2\beta N})^{-1}\bigr\|_{L^{\widetilde{p},\widetilde{q}}}.
\]
By definition of the weight $v$, we have
\begin{align*}
C(t)
&=
\left\|
\frac{(q_{1}+V(x)^{1/2}+|\xi|^{l})^{s_{2}}}
{1+t^{N}(q_{1}+V(x)^{1/2}+|\xi|^{l})^{2\beta N}}
\right\|_{L^{\widetilde{p},\widetilde{q}}} \\
&\le\left\| \frac{(q_{1}+V(x)^{1/2}+|\xi|^{l})^{s_{2}}}{1+t^{N}(V(x)^{1/2}+|\xi|^{l})^{2\beta N}}          \right\|_{L^{\widetilde{p},\widetilde{q}}}
\end{align*}
Since the potential satisfies $V(x)\sim |x|^{2k}$ for large $|x|$, we obtain
\begin{align*}
C(t)^{\widetilde{q}}
&\lesssim
\left\| \frac{(q_{1}+|x|^{k}+|\xi|^{l})^{s_{2}}}{1+t^{N}(|x|^{k}+|\xi|^{l})^{2\beta N}}          \right\|^{\widetilde{q}}_{L^{\widetilde{p},\widetilde{q}}}\lesssim \left\| \frac{(1+|x|^{k}+|\xi|^{l})^{s_{2}}}{1+t^{N}(|x|^{k}+|\xi|^{l})^{2\beta N}}          \right\|^{\widetilde{q}}_{L^{\widetilde{p},\widetilde{q}}}.
\end{align*}
Using convexity and the fact that same degree polynomials have equivalent growth  at infinity  we obtain
\begin{align*}
C(t)^{\widetilde{q}}&\lesssim \left\| \frac{(1+|x|^{k}+|\xi|^{l})^{s_{2}}}{(1+t^{\frac{1}{2\beta}}(|x|^{k}+|\xi|^{l}))^{2\beta N}}          \right\|^{\widetilde{q}}_{L^{\widetilde{p},\widetilde{q}}}\\
&\lesssim \| (1+t^{\frac{1}{2\beta}}(|x|^{k}+|\xi|^{l}))^{s_{2}-2\beta N}\|^{\widetilde{q}}_{L^{\widetilde{p},\widetilde{q}}}\\
&=\int_{\mathbb{R}^{d}}\left(\int_{\mathbb{R}^{d}} [(1+t^{\frac{1}{2\beta}}(|x|^{k}+|\xi|^{l}))^{2\beta N-s_{2}}]^{-\widetilde{p}}\ \mathrm{d}x\right)^{\frac{\widetilde{q}}{\widetilde{p}}}\ \mathrm{d}\xi
\end{align*}

Introducing the scaling
\[
x \mapsto t^{\frac{1}{2\beta k}}\,x,
\qquad
\xi \mapsto t^{\frac{1}{2\beta l }}\,\xi,
\]
we can rewrite the above integral as
\begin{align*}
C(t)^{\widetilde{q}}
&\lesssim
\int_{\mathbb{R}^{d}}
\left(
\int_{\mathbb{R}^{d}}
\bigl( 1+
|t^{\frac{1}{2\beta k}}x|^{k}
+
|t^{\frac{1}{2\beta l}}\xi|^{l}
\bigr)^{-(2\beta N-s_{2})\widetilde{p}}
\,\mathrm{d}x
\right)^{\frac{\widetilde{q}}{\widetilde{p}}}
\mathrm{d}\xi .
\end{align*}

Next making the change of variables
\[
\widetilde{x}
=
t^{\frac{1}{2\beta k}}\,x,
\qquad
\widetilde{\xi}
=
t^{\frac{1}{2\beta l}}\,\xi,
\]
we obtain
\begin{align*}
C(t)^{\widetilde{q}}
&\lesssim
\int_{\mathbb{R}^{d}}
\left(
\int_{\mathbb{R}^{d}}
\bigl(1+|\widetilde{x}|^{k}+|\widetilde{\xi}|^{l}\bigr)^{-(2\beta N-s_{2})\widetilde{p}}
\,t^{-\frac{d}{2\beta k}}
\,\mathrm{d}\widetilde{x}
\right)^{\frac{\widetilde{q}}{\widetilde{p}}}
t^{-\frac{d}{2\beta l}}
\,\mathrm{d}\widetilde{\xi} \\
&=
t^{-\frac{d}{2\beta}(\frac{\widetilde{q}}{k\widetilde{p}}+\frac{1}{l})}
\int_{\mathbb{R}^{d}}
\left(
\int_{\mathbb{R}^{d}}
\bigl(1+|\widetilde{x}|^{k}+|\widetilde{\xi}|^{l}\bigr)^{-(2\beta N-s_{2})\widetilde{p}}
\,\mathrm{d}\widetilde{x}
\right)^{\frac{\widetilde{q}}{\widetilde{p}}}
\mathrm{d}\widetilde{\xi}.
\end{align*}

Denoting
\[
I^{\widetilde{q}}
:=
\int_{\mathbb{R}^{d}}
\left(
\int_{\mathbb{R}^{d}}
\bigl(1+|\widetilde{x}|^{k}+|\widetilde{\xi}|^{l}\bigr)^{-(2\beta N-s_{2})\widetilde{p}}
\,\mathrm{d}\widetilde{x}
\right)^{\frac{\widetilde{q}}{\widetilde{p}}}
\mathrm{d}\widetilde{\xi},
\]
we note that \(I<\infty\) provided \(N\) is chosen sufficiently large. With this choice, the preceding estimate yields
\begin{align}
\label{snot}
C(t)^{\widetilde{q}}
\nonumber&\lesssim
I^{\widetilde{q}}\,
t^{-\frac{d}{2\beta }
\left(\frac{\widetilde{q}}{k\widetilde{p}}+\frac{1}{l}\right)} \\
&\le
C'\,
t^{-\frac{d}{2\beta}
\left(\frac{1}{k\widetilde{p}}+\frac{1}{l\widetilde{q}}\right)},
\end{align}
for some constant $C'>0$ independent of $t\in(0,1]$. Consequently,
\[
C(t)
\lesssim
t^{-\sigma},
\qquad
0<t\le1,
\]
where
\[
\sigma
=
\frac{d}{2\beta}
\left(
\frac{1}{k\widetilde{p}}+\frac{1}{l\widetilde{q}}
\right).
\]

Inserting this bound into \eqref{beforeenequality}, we finally obtain
\begin{align}
\label{so}
\|e^{-t\mathcal{H}_{k,l}^{\beta}}f\|_{\mathcal{M}^{p_{2},q_{2}}_{s_{2}}}
\le
C_{0}\,t^{-\sigma}
\|f\|_{\mathcal{M}^{p_{1},q_{1}}_{s_{1}}}.
\end{align}

We now consider the special case $s_{1}=s_{2}=s\ge0$. The proof is divided into two regimes, namely $t\ge1$ and $0<t\le1$.

\medskip
\noindent
\textbf{Case 1: $0<t\leq 1$.}
In this case, for $s_{1}=s_{2}=s\ge 0$, the estimate \eqref{so} holds. The proof follows the same argument as in the case $s_{1}\neq s_{2}$.

\textbf{Case 2: $t\ge1$.}
Using the spectral decomposition of $\mathcal{H}_{k,l}$, we write
\begin{align}
\label{semigroupPk}
\begin{aligned}
\|e^{-t\mathcal{H}_{k,l}^{\beta}}f\|_{\mathcal{M}^{p_{2},q_{2}}_{s}}
&=
\Bigl\|\sum_{j=0}^{\infty}e^{-t\lambda_{j}^{\beta}}P_{j}f\Bigr\|_{\mathcal{M}^{p_{2},q_{2}}_{s}} \\
&\le
\sum_{j=0}^{\infty}e^{-t\lambda_{j}^{\beta}}
\|P_{j}f\|_{\mathcal{M}^{p_{2},q_{2}}_{s}} \\
&\le
\sum_{j=0}^{\infty}e^{-t\lambda_{j}^{\beta}}
\|P_{j}\|_{\mathcal{M}^{p_{1},q_{1}}_{s}\to\mathcal{M}^{p_{2},q_{2}}_{s}}
\|f\|_{\mathcal{M}^{p_{1},q_{1}}_{s}} .
\end{aligned}
\end{align}

Let $s_{0}>s$ be sufficiently large. By the embedding relations established earlier,
\[
Q^{s_{0}}_{k,l}
\hookrightarrow
\mathcal{M}^{p,q}_{s}
\hookrightarrow
\mathcal{M}^{\infty,\infty}_{s}
\hookrightarrow
Q^{-s_{0}}_{k,l}.
\]
Hence there exists a constant $C'>0$ such that
\begin{align}
\label{Qsinequality1}
\|P_{j}\|_{\mathcal{M}^{p_{1},q_{1}}_{s}\to\mathcal{M}^{p_{2},q_{2}}_{s}}
\le
C'
\sup_{\|f\|_{Q^{-s_{0}}_{k,l}}\le1}
\|P_{j}f\|_{Q^{s_{0}}_{k,l}} .
\end{align}

By orthogonality of the spectral projections,
\[
\|P_{j}f\|_{Q^{s_{0}}_{k,l}}^{2}
=
\sum_{i=0}^{\infty}\lambda_{i}^{s_{0}}\|P_{i}P_{j}f\|_{L^{2}}^{2}
=
\lambda_{j}^{s_{0}}\|P_{j}f\|_{L^{2}}^{2},
\]
and therefore
\begin{equation}
\label{Qsinequality2}
\|P_{j}f\|_{Q^{s_{0}}_{k,l}}
=
\lambda_{j}^{\frac{s_{0}}{2}}\|P_{j}f\|_{L^{2}}.
\end{equation}
On the other hand, if $\|f\|_{Q^{-s_{0}}_{k,l}}\le1$, then
\[
\sum_{j=0}^{\infty}\lambda_{j}^{-s_{0}}\|P_{j}f\|_{L^{2}}^{2}\le1,
\]
which implies
\begin{equation}
\label{Qsinequality3}
\|P_{j}f\|_{L^{2}}\le \lambda_{j}^{\frac{s_{0}}{2}}.
\end{equation}
Combining \eqref{Qsinequality2} and \eqref{Qsinequality3} in \eqref{Qsinequality1}, we obtain
\[
\|P_{j}\|_{\mathcal{M}^{p_{1},q_{1}}_{s}\to\mathcal{M}^{p_{2},q_{2}}_{s}}
\le
C'\lambda_{j}^{s_{0}}.
\]

Inserting this bound into \eqref{semigroupPk} yields
\[
\|e^{-t\mathcal{H}_{k,l}^{\beta}}\|_{\mathcal{M}^{p_{1},q_{1}}_{s}\to\mathcal{M}^{p_{2},q_{2}}_{s}}
\le
C'
\sum_{j=0}^{\infty}e^{-t\lambda_{j}^{\beta}}\lambda_{j}^{s_{0}}.
\]
Now following the same argument as in \cite{MR4944933} in Theorem~3.4 we obtain,
\begin{equation}
\label{eestimate}
\sum_{j=0}^{\infty}e^{-t\lambda_{j}^{\beta}}\lambda_{j}^{s_{0}}
\le
C_{0}e^{-t\lambda_{0}^{\beta}},
\qquad t\ge1,
\end{equation}
for some constant $C_{0}>0$.

This completes the proof in the case $t\ge1$.

\end{proof}

\section{Application to Well-posedness of the Nonlinear Heat Equations} 
As an application of the results obtained in the previous section, we study the well-posedness of nonlinear heat equations associated with fractional powers of the generalised anharmonic oscillator $\Hd^{\beta}$, involving different inhomogeneous nonlinearities. In particular, we consider the nonlinear heat equation with a power-type inhomogeneous nonlinearity
\begin{align}
\label{Nonlinearequation}
\begin{cases}
\partial_{t}u(t,x) + \Hd^{\beta} u(t,x) = \lambda (|u|^{2\nu}u)(t,x),\\
u(0,x) = u_{0}(x),
\end{cases}
\end{align}
for $(t,x)\in[0,\infty)\times\mathbb{R}^{d}$, where $\nu\in\mathbb{N}$, $\lambda\in\mathbb{C}$, and $\beta>0$.

We also investigate a nonlinear heat equation with a spatially inhomogeneous power nonlinearity
\begin{align}
\label{inhomogeneousequationforHd}
\begin{cases}
\partial_{t}u(t,x) + \Hd^{\beta}u(t,x)
= \lambda |x|^{-\alpha}\big(|u|^{2\nu}u\big)(t,x),\\
u(0,x) = u_{0}(x),
\end{cases}
\end{align}
for $(t,x)\in[0,\infty)\times\mathbb{R}^{d}$, where $u(t,x)\in\mathbb{C}$, $\lambda=\pm1$, $\alpha>0$, and $\nu\in\mathbb{N}$.
The well-posedness of \eqref{Nonlinearequation} has so far been established only in the setting of unweighted modulation spaces $\mathcal{M}^{p,q}$ with $p,q\geq 1$; see \cite{MR4313961, MR4944933}. In contrast, a conjecture of Cordero \cite{MR4215324} predicts that this class of nonlinear evolution equations should remain well posed in the substantially larger scale of weighted modulation spaces $\mathcal{M}^{p,q}_{s}$, including the quasi-Banach regime $0<q\leq 1$.

Relying on the sharp semigroup bounds for $e^{-t\Hd^{\beta}}$ established above, we confirm this prediction in the Banach range and prove that the Cauchy problem \eqref{Nonlinearequation} admits a unique global-in-time solution in $\mathcal{M}^{p,q}_{s}$ for all $1\leq p,q<\infty$ and for a suitable range of weights $s>0$. This result substantially extends the existing theory and demonstrates that weighted modulation spaces provide a natural and robust phase-space framework for nonlinear diffusion equations associated with anharmonic operators. We state the following theorem now. 
\begin{theorem} 
\label{firstwellposed}
Let $1 \leq p,q \leq \infty$ and assume that $s > \frac{d}{q'}$. There exists $\varepsilon>0$ such that, for every initial data
$u_{0} \in \mathcal{M}^{p,q}_{s}$ satisfying
\[
\|u_{0}\|_{\mathcal{M}^{p,q}_{s}} \leq \varepsilon,
\]
the Cauchy problem \eqref{Nonlinearequation} admits a unique global-in-time solution
\[
u \in L^{\infty}\big([0,\infty), \mathcal{M}^{p,q}_{s}\big).
\]
Moreover, if $1 \leq p,q < \infty$, then the solution depends continuously on time and satisfies
\[
u \in C\big([0,\infty), \mathcal{M}^{p,q}_{s}\big).
\]
\end{theorem}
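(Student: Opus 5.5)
We argue by a Banach fixed point for the Duhamel formulation
\[
\mathcal{T}u(t)=e^{-t\Hd^{\beta}}u_{0}+\lambda\int_{0}^{t}e^{-(t-\tau)\Hd^{\beta}}\bigl(|u|^{2\nu}u\bigr)(\tau)\,\mathrm{d}\tau
\]
on the closed ball $B_{R}=\{u\in L^{\infty}([0,\infty),\mathcal{M}^{p,q}_{s}):\ \|u\|_{L^\infty_t\mathcal{M}^{p,q}_s}\le R\}$.

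\textbf{The linear bound.} The basic tool is Theorem~\ref{BoundofH} with $p_{1}=p_{2}=p$, $q_{1}=q_{2}=q$ and $s_{1}=s_{2}=s$. Here $s>d/q'\ge0$, so $s\ge 0$, and the case $s_1=s_2=s$ of that theorem applies. Since $\widetilde p=\widetilde q=\infty$, we get $\sigma=0$, and \eqref{secondconstant} gives
\[
\|e^{-t\Hd^{\beta}}f\|_{\mathcal{M}^{p,q}_{s}}\le C_{0}\,\min\{1,e^{-t\lambda_{0}^{\beta}}\}\,\|f\|_{\mathcal{M}^{p,q}_{s}}\lesssim e^{-t\lambda_{0}^{\beta}}\|f\|_{\mathcal{M}^{p,q}_{s}},\qquad t>0.
\]
For $t\le1$ the factor $e^{-t\lambda_0^\beta}$ is bounded below, so the two regimes combine into this single exponential estimate. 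The decay, which uses $\lambda_{0}>0$, is what allows a global-in-time result without any restriction on $\beta$ or $\nu$.

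\textbf{The nonlinear term.} Since $s>d/q'$, Lemma~\ref{Inclusionrelation}(3) makes $\mathcal{M}^{p,q}_{s}$ a multiplicative algebra. We also need that it is closed under complex conjugation, $\|\bar f\|_{\mathcal{M}^{p,q}_{s}}=\|f\|_{\mathcal{M}^{p,q}_{s}}$. This holds because $V_{\bar g}\bar f(x,\xi)=\overline{V_g f(x,-\xi)}$ and the weight $v_{s}$ is even in $\xi$. Writing $|u|^{2\nu}u=u^{\nu+1}\bar u^{\nu}$ then gives
\[
\bigl\||u|^{2\nu}u\bigr\|_{\mathcal{M}^{p,q}_{s}}\lesssim\|u\|^{2\nu+1}_{\mathcal{M}^{p,q}_{s}}.
\]
Telescoping the difference yields
\[
\bigl\||u|^{2\nu}u-|v|^{2\nu}v\bigr\|\lesssim\bigl(\|u\|^{2\nu}+\|v\|^{2\nu}\bigr)\|u-v\|.
\]

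\textbf{Contraction.} Combining the two estimates,
\[
\|\mathcal{T}u(t)\|\le C_{0}\|u_{0}\|+C|\lambda|\int_{0}^{t}e^{-(t-\tau)\lambda_{0}^{\beta}}\,\mathrm{d}\tau\;R^{2\nu+1}\le C_{0}\varepsilon+\frac{C|\lambda|}{\lambda_{0}^{\beta}}R^{2\nu+1},
\]
and similarly $\|\mathcal{T}u-\mathcal{T}v\|\le \frac{C|\lambda|}{\lambda_0^\beta}R^{2\nu}\|u-v\|$. Take $R=2C_{0}\varepsilon$ with $\varepsilon$ small enough that $\frac{C|\lambda|}{\lambda_0^\beta}R^{2\nu}\le\frac12$. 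Then $\mathcal{T}$ is a contraction on $B_{R}$, and its fixed point is the unique global solution in $B_R$.

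Uniqueness in all of $L^\infty([0,\infty),\mathcal{M}^{p,q}_s)$ follows from a standard Gronwall argument on finite intervals. If $u,w$ are two solutions with $\sup_t\|u\|,\sup_t\|w\|\le K$, the same estimates give
\[
\|u(t)-w(t)\|\le CK^{2\nu}\int_0^t\|u-w\|\,\mathrm{d}\tau,
\]
hence $u=w$.

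\textbf{Time continuity (main obstacle).} For $1\le p,q<\infty$ we must show $u\in C([0,\infty),\mathcal{M}^{p,q}_{s})$. The hard part is strong continuity of the semigroup at $t=0$ in $\mathcal{M}^{p,q}_{s}$. We would argue by density. By Lemma~\ref{Inclusionrelation}(1), $\mathcal{S}$ is dense when $p,q<\infty$. The constant in the linear bound is uniform for $t\in(0,1]$. So it suffices to check $e^{-t\Hd^{\beta}}\varphi\to\varphi$ for $\varphi\in\mathcal{S}$, or even for finite linear combinations of eigenfunctions, which lie in $\mathcal{S}$ and are dense.

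For an eigenfunction $\Phi_{j,i}$,
\[
e^{-t\Hd^{\beta}}\Phi_{j,i}-\Phi_{j,i}=(e^{-t\lambda_{j}^{\beta}}-1)\Phi_{j,i}\to0
\]
in any modulation norm. Density of their span in $\mathcal{M}^{p,q}_{s}$ follows from $Q^{s_0}_{k,l}\hookrightarrow\mathcal{M}^{p,q}_s$ in \eqref{chain1}, the density of this span in $Q^{s_{0}}_{k,l}$, and the density of $\mathcal{S}\subset Q^{s_0}_{k,l}$ in $\mathcal{M}^{p,q}_s$. Continuity at $t_0>0$ follows from the semigroup property together with the uniform bound.

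The Duhamel term is continuous by the standard splitting
\[
\int_0^{t+h}-\int_0^t=\int_t^{t+h}e^{-(t+h-\tau)\Hd^{\beta}}F(\tau)\,\mathrm{d}\tau+\bigl(e^{-h\Hd^{\beta}}-I\bigr)\int_0^t e^{-(t-\tau)\Hd^{\beta}}F(\tau)\,\mathrm{d}\tau,
\]
with $F=\lambda|u|^{2\nu}u$. The first piece is $O(h)$ by the uniform bounds. The second tends to $0$ by strong continuity, interpreting the integral as a Bochner integral or weakly if needed; continuity from the left is handled the same way.
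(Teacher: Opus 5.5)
Your proposal is correct and follows essentially the paper's route. Both arguments run a contraction for the Duhamel map in $L^{\infty}([0,\infty),\mathcal{M}^{p,q}_{s})$, built on the $\sigma=0$ case of Theorem~\ref{BoundofH} and the algebra property of Lemma~\ref{Inclusionrelation}(3), and then obtain time continuity from strong continuity of $e^{-t\Hd^{\beta}}$ on a dense subspace. Your dense subspace is finite eigenfunction sums together with $Q^{s_{0}}_{k,l}\hookrightarrow\mathcal{M}^{p,q}_{s}$, whereas the paper uses Schwartz functions and the seminorms $\|\Hd^{\gamma}f\|_{L^{2}}$.

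Your treatment of the nonlinearity, via $|u|^{2\nu}u=u^{\nu+1}\bar u^{\nu}$, conjugation invariance and telescoping, is more rigorous than the paper's appeal to a pointwise inequality, since modulation norms are not lattice norms. The only slip is cosmetic: the factor $C_{0}\min\{1,e^{-t\lambda_{0}^{\beta}}\}$ should read $C_{0}e^{\lambda_{0}^{\beta}}e^{-t\lambda_{0}^{\beta}}$, which is exactly what your next sentence uses.
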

\begin{proof}
  By Duhamel’s principle, the solution $u$ of \eqref{Nonlinearequation} can be written in the integral form
\begin{align}\label{eq4.5}
u(t)
= e^{-t\Hd^\beta}u_0
+ \lambda \int_{0}^{t} e^{-(t-s)\Hd^\beta}
\big(|u(s)|^{2\nu}u(s)\big)\,\mathrm{d}s
=: \mathcal{L}(u).
\end{align}

From Theorem~\ref{BoundofH}, for $p,q$ as in the statement of the theorem, the semigroup $e^{-t\Hd^\beta}$ satisfies the estimate
\begin{align}\label{eq4.7}
\| e^{-t\Hd^\beta} f \|_{\mathcal{M}_{s}^{p,q}}
\leq C'
\begin{cases}
t^{-\sigma}\, \| f \|_{\mathcal{M}_{s}^{p,q}}, & 0 \leq t \leq 1,\\[4pt]
e^{-t\lambda_0^\beta}\, \| f \|_{\mathcal{M}_{s}^{p,q}}, & t \geq 1,
\end{cases}
\end{align}
for some constant $C'>0$. In the present setting, $\sigma = 0$. Moreover, for $t \geq 1$, the exponential decay yields the uniform bound
\[
C' e^{-t\lambda_0^\beta} \leq C'.
\] 
Therefore, we obtain
\begin{align}\label{ocd}
\big\| e^{-t\Hd^\beta}u_0\big\|_{\mathcal{M}_{s}^{p,q}}
\leq C_{2}\,\|u_{0}\|_{\mathcal{M}^{p,q}_{s}},
\end{align}
for some constant $C_{2}>0$.  

Next, we estimate the inhomogeneous term in \eqref{eq4.5}. Applying Minkowski’s integral inequality together with the semigroup bound \eqref{eq4.7}, we obtain
\begin{align}\label{eq48}
\Bigg\| \int_{0}^{t} e^{-(t-s)\Hd^{\beta}}
\big(|u(s)|^{2\nu} u(s)\big)\,\mathrm{d}s \Bigg\|_{\mathcal{M}_{s}^{p,q}}
&\leq  \int_{0}^{t}
\big\| e^{-(t-s)\Hd^{\beta}}
\big(|u(s)|^{2\nu} u(s)\big) \big\|_{\mathcal{M}_{s}^{p,q}}\,\mathrm{d}s \nonumber\\
&\leq C'
\begin{cases}
\displaystyle \int_{0}^{t}
(t-s)^{-\sigma}
\big\| |u(s)|^{2\nu} u(s) \big\|_{\mathcal{M}_{s}^{p,q}}
\,\mathrm{d}s, & 0<t\leq 1,\\[10pt]
\displaystyle \int_{0}^{t}
e^{-(t-s)\lambda_0^\beta}
\big\| |u(s)|^{2\nu} u(s) \big\|_{\mathcal{M}_{s}^{p,q}}
\,\mathrm{d}s, & t \geq 1.
\end{cases}
\end{align}
Since $s-\frac{d}{q'}>0$, it follows from Theorem~\ref{Inclusionrelation} that
$\mathcal{M}^{p,q}_{s}$ is a multiplicative algebra. Consequently, we obtain
\begin{align}\label{ocd1}
\Bigg\| \int_{0}^{t} e^{-(t-s)\Hd^{\beta}}
\big(|u(s)|^{2\nu} u(s)\big)\,\mathrm{d}s \Bigg\|_{\mathcal{M}_{s}^{p,q}}
&\leq C'
\begin{cases}
\displaystyle \int_{0}^{t}
(t-s)^{-\sigma}
\|u(s)\|_{\mathcal{M}_{s}^{p,q}}^{2\nu+1}\,\mathrm{d}s,
& 0<t\leq 1,\\[10pt]
\displaystyle \int_{0}^{t}
e^{-(t-s)\lambda_0^\beta}
\|u(s)\|_{\mathcal{M}_{s}^{p,q}}^{2\nu+1}\,\mathrm{d}s,
& t \geq 1,
\end{cases}
\nonumber\\
&\leq C''\,\|u\|_{L^{\infty}([0,t],\mathcal{M}^{p,q}_{s})}^{2\nu+1}
\begin{cases}
\displaystyle \int_{0}^{t}\tau^{-\sigma}\,\mathrm{d}\tau, & 0<t\leq 1,\\[8pt]
\displaystyle \int_{0}^{t} e^{-\tau\lambda_{0}^{\beta}}\,\mathrm{d}\tau,
& t \geq 1,
\end{cases}
\nonumber\\
&\leq C_{3}\,
\|u\|_{L^{\infty}([0,\infty),\mathcal{M}^{p,q}_{s})}^{2\nu+1},
\end{align}
where we used that the integrals in the previous line are finite under the assumption
$\sigma=0$ for $0<t\leq 1$.

Combining \eqref{ocd} with \eqref{ocd1}  , we conclude that
\begin{align}
\label{C4eq}
\|\mathcal{L}(u)\|_{L^{\infty}([0,\infty),\mathcal{M}^{p,q}_{s})}
\leq C_{4}
\left(
\|u_{0}\|_{\mathcal{M}^{p,q}_{s}}
+
\|u\|_{L^{\infty}([0,\infty),\mathcal{M}^{p,q}_{s})}^{2\nu+1}
\right).
\end{align}
for some $C_{4}>0$.
 For $\varepsilon>0$, define
\[
B_{\varepsilon}
=
\Big\{
u \in L^{\infty}\big([0,\infty), \mathcal{M}^{p,q}_{s}\big)
:\ 
\|u\|_{L^{\infty}([0,\infty), \mathcal{M}^{p,q}_{s})}
\leq \varepsilon
\Big\},
\]
which is a closed ball centered at the origin in
$L^{\infty}\big([0,\infty), \mathcal{M}^{p,q}_{s}\big)$.

We first show that, for a suitable choice of $\varepsilon>0$, the mapping
$\mathcal{L}$ maps $B_{\varepsilon}$ into itself.  
Assume that the initial data satisfies
\[
\|u_{0}\|_{\mathcal{M}^{p,q}_{s}} \leq \frac{\varepsilon}{2C_{4}}.
\]
Then, by \eqref{C4eq}, for any $u \in B_{\varepsilon}$ we obtain
\[
\|\mathcal{L}(u)\|_{L^{\infty}([0,\infty), \mathcal{M}^{p,q}_{s})}
\leq \frac{\varepsilon}{2} + C_{4}\varepsilon^{2\nu+1}.
\]

Since $\nu>0$, we may choose $\varepsilon>0$ sufficiently small so that
\[
\varepsilon^{2\nu} \leq \frac{1}{2C_{4}}.
\]
With this choice, it follows that
\[
\|\mathcal{L}(u)\|_{L^{\infty}([0,\infty), \mathcal{M}^{p,q}_{s})}
\leq \frac{\varepsilon}{2} + \frac{\varepsilon}{2}
= \varepsilon,
\]
that is, $\mathcal{L}(u) \in B_{\varepsilon}$.

We next show that $\mathcal{L}$ is a contraction on $B_{\varepsilon}$.  
Let $u,v \in B_{\varepsilon}$. Repeating the arguments above and using the pointwise inequality
\[
\big||u|^{2\nu}u - |v|^{2\nu}v\big|
\leq C_{5}\big(|u|^{2\nu-1} + |v|^{2\nu-1}\big)|u-v|,
\]
for some constant $C_{5}>0$, we obtain
\begin{align*}
\|\mathcal{L}(u)-\mathcal{L}(v)\|_{L^{\infty}([0,\infty), \mathcal{M}^{p,q}_{s})}
&\leq
C_{5}\Big(
\|u\|^{2\nu-1}_{L^{\infty}([0,\infty), \mathcal{M}^{p,q}_{s})}
+
\|v\|^{2\nu-1}_{L^{\infty}([0,\infty), \mathcal{M}^{p,q}_{s})}
\Big)
\|u-v\|_{L^{\infty}([0,\infty), \mathcal{M}^{p,q}_{s})} \\
&\leq
C_{5}'\,\varepsilon^{2\nu-1}
\|u-v\|_{L^{\infty}([0,\infty), \mathcal{M}^{p,q}_{s})},
\end{align*}
for some constant $C_{5}'>0$.
Hence, by choosing $\varepsilon>0$ sufficiently small, for instance
\[
\varepsilon < \left(\frac{1}{2C_{5}'}\right)^{\frac{1}{2\nu-1}},
\]
we conclude that $\mathcal{L}$ is a contraction on $B_{\varepsilon}$.  
Therefore, by Banach’s contraction principle, the mapping $\mathcal{L}$ admits a unique fixed point in $B_{\varepsilon}$.  
This fixed point is the unique global solution of the Cauchy problem \eqref{Nonlinearequation}.

To establish that the unique solution belongs to
$C\big([0,\infty), \mathcal{M}^{p,q}_{s}\big)$ for $1 \leq p,q < \infty$,
we first assume that the semigroup $e^{-t\Hd^{\beta}}$ is strongly
continuous on $\mathcal{M}^{p,q}_{s}$. Under this assumption, the Banach
fixed point argument developed above can be carried out verbatim with
the space $C\big([0,\infty), \mathcal{M}^{p,q}_{s}\big)$ in place of
$L^{\infty}\big([0,\infty), \mathcal{M}^{p,q}_{s}\big)$, yielding a
solution continuous in time.

It therefore remains to justify the strong continuity of the semigroup
$e^{-t\Hd^{\beta}}$ on $\mathcal{M}^{p,q}_{s}$. In view of the estimate
\eqref{ocd}, it suffices to verify that, for every $f$ belonging to a
dense subset of $\mathcal{M}^{p,q}_{s}$, the mapping
\[
t \longmapsto e^{-t\Hd^{\beta}} f
\]
is continuous with values in $\mathcal{M}^{p,q}_{s}$.

Since the Schwartz class $\mathcal{S}(\mathbb{R}^{d})$ is dense in
$\mathcal{M}^{p,q}_{s}$, we fix $f \in \mathcal{S}(\mathbb{R}^{d})$.
It is well known that the semigroup $e^{-t\Hd^{\beta}}$ is strongly
continuous on $L^{2}(\mathbb{R}^{d})$, and that $\Hd^{\gamma}$ commutes
with $e^{-t\Hd^{\beta}}$ for every $\gamma \in \mathbb{N}$. Consequently,
for each $\gamma \in \mathbb{N}$, the mapping
\[
t \longmapsto e^{-t\Hd^{\beta}}\Hd^{\gamma}f
= \Hd^{\gamma} e^{-t\Hd^{\beta}}f
\]
is continuous with values in $L^{2}(\mathbb{R}^{d})$.

By a standard abstract argument (see \cite[p.~194]{MR2668420}), the
family of seminorms
\[
p_{\gamma}(f) := \|\Hd^{\gamma}f\|_{L^{2}(\mathbb{R}^{d})},
\qquad \gamma \in \mathbb{N},
\]
forms an equivalent system of seminorms on $\mathcal{S}(\mathbb{R}^{d})$.
Therefore, for any $t_{0} \in [0,\infty)$, convergence
\[
e^{-t\Hd^{\beta}}f \longrightarrow e^{-t_{0}\Hd^{\beta}}f
\quad \text{in } \mathcal{S}(\mathbb{R}^{d}) \text{ as } t \to t_{0}
\]
is equivalent to
\[
\|\Hd^{\gamma}(e^{-t\Hd^{\beta}}f)
- \Hd^{\gamma}(e^{-t_{0}\Hd^{\beta}}f)\|_{L^{2}(\mathbb{R}^{d})}
\longrightarrow 0
\quad \text{as } t \to t_{0},
\]
or, equivalently,
\[
\|e^{-t\Hd^{\beta}}\Hd^{\gamma}f
- e^{-t_{0}\Hd^{\beta}}\Hd^{\gamma}f\|_{L^{2}(\mathbb{R}^{d})}
\longrightarrow 0
\quad \text{as } t \to t_{0}.
\]

This shows that the mapping
\[
t \longmapsto e^{-t\Hd^{\beta}}f
\]
is continuous with values in $\mathcal{S}(\mathbb{R}^{d})$, and hence,
by continuity of the embedding, also when regarded as a
$\mathcal{M}^{p,q}_{s}$-valued function. Consequently, for
$1 \leq p,q < \infty$, the unique solution $u$ of
\eqref{Nonlinearequation} satisfies
\[
u \in C\big([0,\infty), \mathcal{M}^{p,q}_{s}\big).
\]
\end{proof}

\begin{remark} By adapting the method developed in \cite{MR4313961, MR4944933}, for usual modulation spaces, one can also establish the above well-posedness result, Theorem~\ref{firstwellposed}, in the weighted modulation space $\mathcal{M}^{p,q}_{s}$. However, this strategy imposes certain restrictions on the parameters $\beta$ and $\nu$. To avoid these constraints, we  use the fact that, for the chosen range of $s$, the space $\mathcal{M}^{p,q}_{s}$ is a multiplicative algebra. This structural property allows us to treat the equation in a wider class of modulation spaces without further restrictions on the parameters.

 Now for $s=0$ and $q=1$, using the approach in Theorem~\ref{firstwellposed} in this paper, one obtains a well-posedness result in the modulation space  $\mathcal{M}^{p,1}$. For $q=1$, well-posedness for the  classical Laplacian  was established in \cite{MR2506839}. In contrast, when $q \neq 1$, one may instead employ the method developed in \cite{MR4313961, MR4944933}, which naturally imposes certain restrictions on the parameters $\beta$ and $\nu$. Despite these constraints, this approach is particularly effective for the treatment of nonlinear heat equation with a spatially inhomogeneous power nonlinearity, as demonstrated in Theorem~\ref{Anharinhom}.

\end{remark}

We now consider the inhomogeneous nonlinear  equation~\eqref{inhomogeneousequationforHd}, which can also be treated within the framework of weighted
modulation spaces $\mathcal{M}^{p,q}_{s}$, assuming small initial data
$u_{0}$. To the best of our knowledge, the well-posedness theory for
inhomogeneous nonlinear heat equations of the form
\eqref{inhomogeneousequationforHd} associated with fractional powers of
the generalised anharmonic oscillator has not been addressed in the
existing literature.

The main difficulty in the analysis of \eqref{inhomogeneousequationforHd}
stems from the singularity of the nonlinear term. To address this
issue, we identify a range of exponents $\alpha>0$ for which the
singular weight
\[
f_{\alpha}(x) := |x|^{-\alpha}
\]
belongs to a suitable weighted modulation space
$\mathcal{M}^{p,q}_{s}$ for appropriate choices of $p,q$ and $s>0$.
This allows the nonlinear term to be handled within the modulation
space framework.

\begin{lemma}
\label{|x|inMpq}
Let $s>0$ and $ks<\alpha<d-ls$. Then $f_{\alpha} \in \mathcal{M}^{p,q}_{s}$
provided that
\[
p > \frac{d}{\alpha-ks}
\qquad \text{and} \qquad
q > \frac{d}{\,d-\alpha-ls\,}.
\]
\end{lemma}

\begin{proof} We decompose the function $f_{\alpha}$ as
\[
f_{\alpha} = \chi f_{\alpha} + (1-\chi)f_{\alpha},
\]
where $\chi \in C_c^{\infty}(\mathbb{R}^{d})$ is a smooth cutoff function
such that $\chi \equiv 1$ in a neighbourhood of the origin. We choose a
window function $g \in \mathcal{S}(\mathbb{R}^{d})$ with compact support.

For the local part $\chi f_{\alpha}$, the short-time Fourier transform
satisfies $V_{g}(\chi f_{\alpha})(x,\xi)=0$ for $|x|$ sufficiently large.
Recalling the argument in Example~2.1 in \cite{MR4313961}, for $x$ in a compact set, we have the following  pointwise estimate
\[
|V_{g}(\chi f_{\alpha})(x,\xi)|
\lesssim
|\widehat{f_{\alpha}}| *_{\xi} |\widehat{\chi}|
*_{\xi} |\widehat{\overline{g}}|(\xi).
\]

Note that $\chi f_{\alpha} \in \mathcal{M}^{p,q}_{s}$ if and only if
\[
\|V_{g}(\chi f_{\alpha})(x,\xi)(q_{1}+V(x)^{1/2}+|\xi|^{l})^{s}\|_{L^{p}_{x}L^{q}_{\xi}} < \infty,
\]
and observing that $V_{g}(\chi f_{\alpha})(\cdot,\xi)$ has compact
support in the $x$-variable, we obtain
\begin{align}
\label{eqns}
\|V_{g}(\chi f_{\alpha})(q_{1}+V(x)^{1/2}+|\xi|^{l})^{s}\|_{L^{p}_{x}L^{q}_{\xi}}
\nonumber&=
\Big\|
\|V_{g}(\chi f_{\alpha})(q_{1}+V(\cdot)^{1/2}+|\xi|^{l})^{s}\|_{L^{p}_{x}}
\Big\|_{L^{q}_{\xi}} \\
&\lesssim
K\,
\big\|
(1+|\xi|)^{ls}
\big(
|\widehat{f_{\alpha}}| *_{\xi} |\widehat{\chi}|
*_{\xi} |\widehat{\overline{g}}|
\big)(\xi)
\big\|_{L^{q}_{\xi}},
\end{align}
where the constant $K>0$ depends only on the measure of the compact support of $V_{g}(\chi f_{\alpha})(\cdot,\xi)$. To obtain the inequality \eqref{eqns} we use the fact that in the compact support in $x$ variable 
\begin{align*}
q_{1}+V(x)^{1/2}+|\xi|^{l}\leq C+|\xi|^{l}\lesssim (1+\xi)^{l},
\end{align*}
where $C>0$ is a constant depends on the compact support.
It is well known that $\widehat{f_{\alpha}} = C_{\alpha} f_{d-\alpha}$ for
some constant $C_{\alpha} \in \mathbb{R}$. Since both $|\widehat{\chi}|$
and $|\widehat{g}|$ are Schwartz functions, their convolution
$|\widehat{\chi}| * |\widehat{g}|$ also belongs to the Schwartz class.
Convolution with a Schwartz function regularizes singularities but does
not improve the decay at infinity; therefore,
\[
|\widehat{f_{\alpha}}| *_{\xi} |\widehat{\chi}|
*_{\xi} |\widehat{\overline{g}}|(\xi)
\sim |\xi|^{-(d-\alpha)}
\qquad \text{as } |\xi| \to \infty.
\]
On the other hand, smoothness implies that the above function is bounded
for $|\xi|\leq 1$. Hence it suffices to check integrability for
$|\xi|>1$.

Since $(1+|\xi|)$ and $|\xi|$ have comparable growth at infinity, we
obtain
\begin{align*}
\int_{|\xi|>1}
\big|
(1+|\xi|)^{ls}
\big(
|\widehat{f_{\alpha}}| *_{\xi} |\widehat{\chi}|
*_{\xi} |\widehat{\overline{g}}|(\xi)
\big)
\big|^{q}
\,\mathrm{d}\xi
\sim
\int_{|\xi|\geq 1}
|\xi|^{(ls-(d-\alpha))q}
\,\mathrm{d}\xi.
\end{align*}
The above estimate is finite only when
\[
q>\frac{d}{d-\alpha-ls}.
\]
Consequently, \(\chi f_{\alpha}\in \mathcal{M}^{p,q}_{s}\) for all
\(q>\frac{d}{d-\alpha-ls}\) provided $d-\alpha>ls$.

We next estimate the short-time Fourier transform of
\((1-\chi)f_{\alpha}\). Set
\[
h(x)=(1-\chi)f_{\alpha}(x).
\]
Observe that \(h(x)\simeq |x|^{-\alpha}\) for large \(|x|\). The
short-time Fourier transform of \(h\) is given by
\[
V_{g}h(x,\xi)
=\int_{\mathbb{R}^{d}} h(y)\, g(y-x)\, e^{-i\xi\cdot y}\,
\mathrm{d}y .
\]
Since
\[
(1-\Delta_{y})^{N} e^{-i\xi\cdot y}
=(1+|\xi|^{2})^{N} e^{-i\xi\cdot y}
\]
for any integer \(N\geq 0\), we may write
\begin{align*}
V_{g}h(x,\xi)
&=(1+|\xi|^{2})^{-N}
\int_{\mathbb{R}^{d}} h(y)\, g(y-x)\,
(1-\Delta_{y})^{N} e^{-i\xi\cdot y}\,
\mathrm{d}y .
\end{align*}
Applying integration by parts, we obtain
\begin{align*}
|V_{g}h(x,\xi)|
&\leq (1+|\xi|^{2})^{-N}
\int_{\mathbb{R}^{d}}
\bigl|(1-\Delta_{y})^{N}\bigl(h(y)g(y-x)\bigr)\bigr|
\, \mathrm{d}y .
\end{align*}
Now,
\begin{align*}
(1-\Delta_{y})^{N}\bigl(h(y)\, g(y-x)\bigr)
= \sum_{|\eta|\leq 2N} C_{\eta}\, \partial^{\eta} h(y)\, \Phi_{\eta}(y-x),
\end{align*}
where each \(\Phi_{\eta}\) is a finite linear combination of derivatives of
\(g\). Consequently, we obtain
\begin{align}
\label{boundofSTFT}
|V_{g}h(x,\xi)|
&\leq (1+|\xi|^{2})^{-N}
\sum_{|\eta|\leq 2N} C_{\eta}
\int_{\mathbb{R}^{d}}
|\partial^{\eta} h(y)|\, |\Phi_{\eta}(y-x)|\, \mathrm{d}y \nonumber\\
&\leq (1+|\xi|^{2})^{-N}
\sum_{|\eta|\leq 2N} C_{\eta}\, I_{\eta},
\end{align}
where
\[
I_{\eta}
= \int_{\mathbb{R}^{d}}
|\partial^{\eta} h(y)|\, |\Phi_{\eta}(y-x)|\, \mathrm{d}y .
\]

Note that \(h(y)=(1-\chi)f_{\alpha}\) is smooth on its support. Moreover, for
\(|y|>1\) we have
\[
|\partial^{\eta} h(y)|
\leq C_{\eta}\,(1+|y|)^{-\alpha-|\eta|}.
\]
Since \(\Phi_{\eta}\in \mathcal{S}(\mathbb{R}^{d})\), for any \(M\in\mathbb{N}\)
there exists \(C_{\eta}>0\) such that
\[
|\Phi_{\eta}(z)|
\leq C_{\eta}\,(1+|z|)^{-M},
\qquad z\in\mathbb{R}^{d}.
\]
The value of \(M\) will be chosen later. Performing the change of variables
\(z=y-x\), we obtain
\begin{align}
\label{boundofI}
I_{\eta}
\leq \int_{\mathbb{R}^{d}}
(1+|x+z|)^{-\alpha-|\eta|}\,(1+|z|)^{-M}\, \mathrm{d}z .
\end{align}

For \(|z|\leq \tfrac{|x|}{2}\), we have
\(|x+z|\geq |x|-|z|\geq \tfrac{|x|}{2}\), and hence
\[
(1+|x+z|)^{-\alpha-|\eta|}
\lesssim (1+|x|)^{-\alpha-|\eta|}.
\]
Using the preceding estimate and choosing \(M>d\), we obtain
\begin{align}
\label{boundofI1}
\int_{|z|\leq \frac{|x|}{2}}
(1+|x+z|)^{-\alpha-|\eta|}(1+|z|)^{-M}\, \mathrm{d}z
&\lesssim
\int_{|z|\leq \frac{|x|}{2}}
(1+|x|)^{-\alpha-|\eta|}(1+|z|)^{-M}\, \mathrm{d}z \nonumber\\
&\leq
(1+|x|)^{-\alpha-|\eta|}
\int_{\mathbb{R}^{d}} (1+|z|)^{-M}\, \mathrm{d}z \nonumber\\
&\lesssim (1+|x|)^{-\alpha-|\eta|} \nonumber\\
&\lesssim (1+|x|)^{-\alpha}.
\end{align}

For the complementary region \(|z|>\tfrac{|x|}{2}\), observe that
\[
2(1+|z|)\geq (1+|x|).
\]
We now fix \(M=M_{1}+M_{2}\) with \(M_{1}>\alpha\) and \(M_{2}>d\).
 With the above estimates, we bound the integral over the region
\(|z|>\tfrac{|x|}{2}\) as follows:
\begin{align*}
\int_{|z|>\frac{|x|}{2}}
(1+|x+z|)^{-\alpha-|\eta|}(1+|z|)^{-M}\, \mathrm{d}z
&\leq
\int_{|z|>\frac{|x|}{2}} (1+|z|)^{-M}\, \mathrm{d}z \\
&=
\int_{|z|>\frac{|x|}{2}}
(1+|z|)^{-M_{1}} (1+|z|)^{-M_{2}}\, \mathrm{d}z \\
&\lesssim
(1+|x|)^{-\alpha}
\int_{|z|>\frac{|x|}{2}} (1+|z|)^{-M_{2}}\, \mathrm{d}z \\
&\lesssim (1+|x|)^{-\alpha}\int_{\mathbb{R}^{d}}(1+|z|)^{-M_{2}}\ \mathrm{d}z\\
&\lesssim (1+|x|)^{-\alpha}.
\end{align*}

Consequently,
\begin{align}
\label{boundofI2}
\int_{|z|>\frac{|x|}{2}}
(1+|x+z|)^{-\alpha-|\eta|}(1+|z|)^{-M}\, \mathrm{d}z
\lesssim (1+|x|)^{-\alpha}.
\end{align}

Combining \eqref{boundofI1} and \eqref{boundofI2} with \eqref{boundofI}, we
conclude that
\[
I_{\eta}\lesssim (1+|x|)^{-\alpha}.
\]
Therefore, from \eqref{boundofSTFT} we obtain
\[
|V_{g}h(x,\xi)|
\lesssim_{N} (1+|\xi|)^{-N} (1+|x|)^{-\alpha}.
\]

Consequently, \((1-\chi)f_{\alpha}\in \mathcal{M}^{p,q}_{s}\) provided that
\[
\| V_{g}h(x,\xi)\,(q_{1}+V(x)^{1/2}+|\xi|^{l})^{s} \|_{L^{p}_{x}L^{q}_{\xi}} < \infty .
\]
To verify this, it suffices to show that
\[
\|(1+|x|)^{-\alpha} (1+|\xi|)^{s-N}(q_{1}+V(x)^{1/2}+|\xi|^{l})^{s}\|_{L^{p}_{x}L^{q}_{\xi}} < \infty .
\]
Note that as $V(x)\sim |x|^{2k}$ 
\begin{align*}
q_{1}+V(x)^{1/2}+|\xi|^{l}\sim q_{1}+|x|^{k}+|\xi|^{l}\lesssim (1+|x|)^{k}(1+|\xi|)^{l}.
\end{align*}
Therefore, 
\begin{align*}
\|V_{g}h(x,\xi)\,(q_{1}+V(x)^{1/2}+|\xi|^{l})^{s}\|_{L^{p}_{x}L^{q}_{\xi}}\lesssim \| (1+|x|)^{ks-\alpha}(1+|\xi|)^{ls-N}\|_{L^{p}_{x}L^{q}_{\xi}}.
\end{align*}
We first choose \(N\) sufficiently large so that
\[
\|(1+|\xi|)^{ls-N}\|_{L^{q}_{\xi}} < \infty .
\]
Moreover, 
\[
\|(1+|x|)^{ks-\alpha}\|_{L^{p}_{x}} < \infty
\quad \text{if and only if} \quad
p > \frac{d}{\alpha-ks}
\]
provided $\alpha>ks$. 
This completes the proof.
\end{proof}

To prove the well-posedness result, we recall a fundamental algebra
property of modulation spaces, which allows us to control multilinear terms
arising from the inhomogeneous nonlinearity in the space
\(\mathcal{M}^{p,q}_{s}\).
We refer to
\cite{feichtinger1983modulation, MR2204673, MR2506839}
for the proof and further discussion.

\begin{proposition}
\label{algebraproperty}
Let \(m\in\mathbb{N}\), \(m\geq 1\), and assume that
\[
\sum_{i=1}^{m} \frac{1}{p_{i}} = \frac{1}{p_{0}},
\qquad
\sum_{i=1}^{m} \frac{1}{q_{i}} = m-1 + \frac{1}{q_{0}},
\]
with \(0<p_{i}\leq \infty\) and \(1\leq q_{i}\leq \infty\) for
\(1\leq i\leq m\).
Then there exists a constant \(C>0\) such that
\[
\biggl\|\prod_{i=1}^{m} f_{i}\biggr\|_{\mathcal{M}^{p_{0},q_{0}}_{s}}
\leq C \prod_{i=1}^{m} \|f_{i}\|_{\mathcal{M}^{p_{i},q_{i}}_{s}}.
\]
\end{proposition}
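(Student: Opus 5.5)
The plan is to reduce the multilinear estimate to the bilinear case $m=2$ and argue by induction on $m$. The bilinear case will be proved by a pointwise convolution inequality for the short-time Fourier transform.

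\textbf{Bilinear case.} We fix a window $g=\varphi^2$ with $\varphi\in\mathcal{S}(\mathbb{R}^d)$. Then
\[
V_g(f_1f_2)(x,\cdot)=V_\varphi f_1(x,\cdot)*_\xi V_\varphi f_2(x,\cdot),
\]
so the product of the two localised pieces becomes a convolution in frequency. We multiply by the weight and use that $v_s$ (with the paper's anharmonic weight, $s\ge 0$) is submultiplicative up to a constant, by \eqref{eq:submultiplicative-weight}:
\[
v_s(x,\xi)\lesssim v_s(x,\xi-\eta)\,v_s(0,\eta)\le v_s(x,\xi-\eta)\,v_s(x,\eta).
\]
This distributes the weight onto the two factors, giving the pointwise bound
\[
|V_g(f_1f_2)|\,v_s \lesssim \bigl(|V_\varphi f_1|v_s\bigr)*_\xi\bigl(|V_\varphi f_2|v_s\bigr).
\]
We then take the $L^{p_0}_x$ norm for fixed $\xi$, using Minkowski's integral inequality and H\"older's inequality in $x$ with $1/p_1+1/p_2=1/p_0$. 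When $p_0<1$ this step needs care: we either use the $p_0$-triangle inequality on a discretised (Gabor/lattice) version, or simply restrict to $p_0\ge1$ if the subsequent application only needs that range. Next we take the $L^{q_0}_\xi$ norm via Young's inequality with $1/q_1+1/q_2=1+1/q_0$, which is exactly the condition in the statement for $m=2$. Since $q_i\ge1$, Young's inequality is legitimate. Window independence of the norms then gives
\[
\|f_1f_2\|_{\mathcal{M}^{p_0,q_0}_s}\lesssim\|f_1\|_{\mathcal{M}^{p_1,q_1}_s}\|f_2\|_{\mathcal{M}^{p_2,q_2}_s}.
\]

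\textbf{Induction.} Write $\prod_{i=1}^m f_i=\bigl(\prod_{i=1}^{m-1}f_i\bigr)f_m$ and define intermediate exponents $\tilde p,\tilde q$ by
\[
\frac1{\tilde p}=\sum_{i<m}\frac1{p_i},\qquad \frac1{\tilde q}=\sum_{i<m}\frac1{q_i}-(m-2).
\]
One checks that $\tilde q\ge 1$, since it is obtained from iterated Young exponents, and that $(\tilde p,\tilde q),(p_m,q_m)$ satisfy the $m=2$ relations with target $(p_0,q_0)$. Applying the bilinear bound and then the inductive hypothesis closes the argument. The case $m=1$ is trivial.

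\textbf{Main obstacle.} The main obstacle is the weight transfer for the anisotropic weight $q_1+V(x)^{1/2}+|\xi|^l$, since only the frequency variable is convolved. The bound must hold uniformly in $x$. Here we use that $|\xi|^l\lesssim|\xi-\eta|^l+|\eta|^l$, so
\[
q_1+V(x)^{1/2}+|\xi|^l\lesssim\bigl(q_1+V(x)^{1/2}+|\xi-\eta|^l\bigr)\bigl(q_1+|\eta|^l\bigr),
\]
and the last factor is dominated by $v(x,\eta)$. For $s\ge0$ this yields the required estimate. For $s<0$ the argument fails, and we would restrict to $s\ge0$, which is the case used in the applications.
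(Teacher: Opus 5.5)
\textbf{What is correct.} The paper gives no proof of this proposition; it only cites \cite{feichtinger1983modulation, MR2204673, MR2506839}. Your argument is the standard B\'enyi--Okoudjou one: the product-window identity $V_{\varphi^2}(f_1f_2)(x,\cdot)=V_\varphi f_1(x,\cdot)*V_\varphi f_2(x,\cdot)$, then Minkowski and H\"older in $x$, Young in $\xi$, and induction on $m$. Your pointwise bound $v(x,\xi)\lesssim v(x,\xi-\eta)\,(q_1+|\eta|^l)\le v(x,\xi-\eta)\,v(x,\eta)$ correctly adapts it to the $x$-dependent anharmonic weight. The induction exponents are admissible, since $1/\tilde q=1+1/q_0-1/q_m\in[0,1]$. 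For $p_0\ge 1$ and $s\ge 0$ your proof is complete.

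\textbf{The gap: $0<p_0<1$.} The statement admits this range, even when all $p_i\ge1$; indeed Lemma~\ref{multilinearlema} uses $p_0=p/(2\nu+2)$. Here Minkowski in $x$ fails, and your proposed repair via a $p_0$-triangle inequality on a Gabor discretisation cannot work, because the estimate itself is false. Take $d=1$, $s=0$, $m=2$, $p_1=p_2=1$, $q_1=q_2=2$, so that $p_0=\tfrac12$ and $q_0=\infty$. Let $\phi(x)=e^{-\pi x^2}$, fix $N$, and for large $L$ set
\[
f_1(x)=N^{-1/2}\sum_{n=1}^{N}e^{2\pi i n x}\phi(x-Ln),\qquad f_2(x)=N^{-1/2}\sum_{n=1}^{N}e^{2\pi i n x}\phi\bigl(x-L(N+1-n)\bigr).
\]
With a Gaussian window, $\|V_gf_1(\cdot,\xi)\|_{L^1_x}\lesssim N^{-1/2}\sum_{n=1}^{N}e^{-\pi(\xi-n)^2/2}$. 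Hence $\|f_1\|_{\mathcal{M}^{1,2}}\lesssim 1$ uniformly in $N$ and $L$, and the same holds for $f_2$.

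In $f_1f_2$, the pairs with $n+n'\neq N+1$ carry the factor $e^{-\pi L^2(n+n'-N-1)^2/2}$ and vanish in the quasi-norm as $L\to\infty$. The pairs with $n+n'=N+1$ give $N^{-1}e^{2\pi i(N+1)x}\sum_{n=1}^{N}\phi(x-Ln)^2$. Its STFT at $\xi=N+1$ is $N^{-1}$ times $N$ positive, essentially disjoint bumps, so its $L^{1/2}_x$ quasi-norm is of order $N^{-1}\cdot N^{2}=N$. Thus $\|f_1f_2\|_{\mathcal{M}^{1/2,\infty}}\gtrsim N$, while the right-hand side is $\lesssim 1$. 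The mechanism is that spatially separated pieces meeting at one frequency add in $\ell^{p_0}$, not in $\ell^1$, and that is exactly what Minkowski hides.

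\textbf{How to fix it.} Only your second option is viable: add $p_0\ge1$ as a hypothesis. This costs nothing downstream, but Lemma~\ref{multilinearlema} must then be rerouted. Apply the proposition with $p_1=p$, $p_2=\dots=p_{2\nu+2}=\infty$ (so $p_0=p$), all $q_i=q$ and $q_0=r$, and then use $\mathcal{M}^{p,q}_s\hookrightarrow\mathcal{M}^{\infty,q}_s$.

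Your restriction $s\ge0$ is likewise forced by the statement, not a weakness of the method. For $s<-d/l$ one has $\delta\in\mathcal{M}^{\infty,1}_s$. With $p_i=\infty$ and $q_i=1$, approximate identities $\delta_\varepsilon$ stay bounded in $\mathcal{M}^{\infty,1}_s$, while $\|\delta_\varepsilon^2\|_{\mathcal{M}^{\infty,1}_s}\gtrsim\varepsilon^{-d}\to\infty$.
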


As a consequence of Proposition~\ref{algebraproperty}, we obtain the following
multilinear estimate, which will be used to control the nonlinear term in the
well-posedness analysis.

\begin{lemma}
\label{multilinearlema}
Let \(f,g\in \mathcal{M}^{p,q}_{s}\) for some \(0<p\leq \infty\) and
\(1\leq q\leq \infty\). Then the following multilinear estimate holds:
\begin{align}
\label{multilinearestimate}
\| g\, |f|^{2\nu} f \|_{\mathcal{M}^{p,r}_{s}}
\leq
C\, \|g\|_{\mathcal{M}^{p,q}_{s}}\,
\|f\|^{2\nu+1}_{\mathcal{M}^{p,q}_{s}},
\end{align}
for \(1\leq p,q,r\leq \infty\), \(\nu\in\mathbb{N}\), and
\[
\frac{2\nu+2}{q} = \frac{1}{r} + 2\nu + 1 .
\]
\end{lemma}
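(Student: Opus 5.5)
The plan is to write the nonlinear term as a product of $2\nu+2$ factors and apply Proposition~\ref{algebraproperty} with $m=2\nu+2$. Since $|f|^{2\nu}=(f\bar f)^{\nu}$, we have
\[
g\,|f|^{2\nu}f = g\cdot \underbrace{f\cdots f}_{\nu+1}\cdot\underbrace{\bar f\cdots \bar f}_{\nu},
\]
so the left-hand side of \eqref{multilinearestimate} is a product of exactly $m=2\nu+2$ functions. All factors other than $g$ are either $f$ or $\bar f$.

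The first step is to note that $\|\bar f\|_{\mathcal{M}^{p,q}_{s}}\lesssim\|f\|_{\mathcal{M}^{p,q}_{s}}$. Indeed, $V_g\bar f(x,\xi)=\overline{V_{\bar g}f(x,-\xi)}$. The weight $v_s=(q_1+V(x)^{1/2}+|\xi|^l)^s$ is even in $\xi$, and the $L^q_\xi$ norm is invariant under $\xi\mapsto-\xi$. Independence of the window, up to equivalent norms, then gives the bound.

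The second step is to choose exponents in Proposition~\ref{algebraproperty}. On the frequency side we take $q_i=q$ for every $i$. The condition $\sum_{i=1}^{m}1/q_i=m-1+1/q_0$ then reads $(2\nu+2)/q=2\nu+1+1/r$, which is exactly the hypothesis relating $q$ and $r$, so we set $q_0=r$. The hypothesis forces $(2\nu+2)/q\ge 2\nu+1$, so $q$ is close to $1$ and all $q_i=q\ge1$ are admissible.

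On the space side the proposition requires $\sum_i 1/p_i=1/p_0$, and we want $p_0=p$. The choice $p_i=p$ for all $i$ gives $p_0=p/(2\nu+2)$ instead. I would therefore assign $p_1=p$ to $g$ and $p_i=\infty$ to the other $2\nu+1$ factors, which gives $p_0=p$. This yields
\[
\| g\,|f|^{2\nu}f\|_{\mathcal{M}^{p,r}_s}\lesssim \|g\|_{\mathcal{M}^{p,q}_s}\,\|f\|_{\mathcal{M}^{\infty,q}_s}^{2\nu+1}.
\]
The embedding $\mathcal{M}^{p,q}_s\hookrightarrow\mathcal{M}^{\infty,q}_s$ from Lemma~\ref{Inclusionrelation}(2) then completes the estimate.

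The expected obstacle is exactly this bookkeeping of the $p$-exponents. Part (2) of Lemma~\ref{Inclusionrelation} is stated for $p\ge1$, and the embedding $\mathcal{M}^{p,q}_s\hookrightarrow\mathcal{M}^{\infty,q}_s$ for $p<1$ follows from the standard monotonicity of modulation spaces in $p$. I would also check that Proposition~\ref{algebraproperty} holds with the weight $v_s$ adapted to $\mathcal{H}_{k,l}$. Its proof uses only submultiplicativity, since the STFT of a product is a convolution in $\xi$, and this submultiplicativity was verified in \eqref{eq:submultiplicative-weight}.
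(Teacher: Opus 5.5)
Your argument is correct. It rests on the same tool as the paper: Proposition~\ref{algebraproperty} with $m=2\nu+2$, $q_i=q$ and $q_0=r$, followed by an inclusion in the $p$-index. The only difference is where the $p$-loss is absorbed.

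The paper takes $p_i=p$ for all $2\nu+2$ factors. This gives the bound in $\mathcal{M}^{p_0,r}_s$ with $1/p_0=(2\nu+2)/p$, and the paper then applies the embedding $\mathcal{M}^{p_0,r}_s\hookrightarrow\mathcal{M}^{p,r}_s$ to the output. You instead put $p_i=\infty$ on the $2\nu+1$ factors $f,\bar f$, so that $p_0=p$ directly, and apply $\mathcal{M}^{p,q}_s\hookrightarrow\mathcal{M}^{\infty,q}_s$ to the inputs.

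Your allocation has a small advantage. For $1\le p$, every exponent stays in the Banach range, where Lemma~\ref{Inclusionrelation}(2) applies as stated. The paper's $p_0=p/(2\nu+2)$ drops below $1$ whenever $p<2\nu+2$, so it tacitly relies on the quasi-Banach versions of both the product estimate and the inclusion. Each route also yields a slightly stronger intermediate estimate in a different direction: the paper's controls the output in the smaller space $\mathcal{M}^{p_0,r}_s$, while yours needs only $\mathcal{M}^{\infty,q}_s$ control of $f$.

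Two of your remarks fill in points the paper passes over silently. One is the treatment of $\bar f$, via $V_g\bar f(x,\xi)=\overline{V_{\bar g}f(x,-\xi)}$ and the evenness of the weight in $\xi$. The other is the check that the product estimate holds for the adapted weight.
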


\begin{proof}
Let \(p_{0}>0\) be such that
\[
\frac{2\nu+2}{p} = \frac{1}{p_{0}} .
\]
By Proposition~\ref{algebraproperty}, we obtain
\[
\| g\, |f|^{2\nu} f \|_{\mathcal{M}^{p_{0},r}_{s}}
\leq
C\, \|g\|_{\mathcal{M}^{p,q}_{s}}\,
\|f\|^{2\nu+1}_{\mathcal{M}^{p,q}_{s}} .
\]
Since \(p_{0}< p\) by construction, the embedding
\[
\mathcal{M}^{p_{0},r}_{s} \hookrightarrow \mathcal{M}^{p,r}_{s}
\]
holds. Consequently,
\begin{align*}
\| g\, |f|^{2\nu} f \|_{\mathcal{M}^{p,r}_{s}}
&\leq
\| g\, |f|^{2\nu} f \|_{\mathcal{M}^{p_{0},r}_{s}} \\
&\leq
C\, \|g\|_{\mathcal{M}^{p,q}_{s}}\,
\|f\|^{2\nu+1}_{\mathcal{M}^{p,q}_{s}},
\end{align*}
which proves \eqref{multilinearestimate}.
\end{proof}

We next state the following theorem, which establishes global existence, uniqueness, and continuous dependence for the inhomogeneous problem \eqref{inhomogeneousequation}. The proof relies essentially on Theorem~\ref{BoundofH}.
\begin{theorem}\label{Anharinhom}
Let $s>0$ be such that  \(ks<\alpha<d-ls\), and
\(1<p,q<\infty\) satisfy
\[
q' \geq 2\nu+2, \qquad
q' > \frac{(2\nu+1)d}{\beta l}, \qquad
q > \frac{d}{d-\alpha-ls}, \qquad
p > \frac{d}{\alpha-ks}.
\]
Then there exists \(R>0\) such that, for every initial data
\(u_{0}\in \mathcal{M}^{p,q}_{s}\) with
\[
\|u_{0}\|_{\mathcal{M}^{p,q}_{s}} \leq R,
\]
the inhomogeneous problem \eqref{inhomogeneousequation} admits a unique
global solution
\[
u \in L^{\infty}\bigl([0,\infty),\, \mathcal{M}^{p,q}_{s}\bigr).
\]
Moreover, if $1 \leq p,q < \infty$, then the solution depends continuously on time and satisfies
\[
u \in C\big([0,\infty), \mathcal{M}^{p,q}_{s}\big).
\]
\end{theorem}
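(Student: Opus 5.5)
We will argue by a Banach fixed point on the Duhamel map
\[
\mathcal{L}(u)(t)=e^{-t\Hd^{\beta}}u_0+\lambda\int_0^t e^{-(t-\tau)\Hd^{\beta}}\bigl(f_\alpha\,|u(\tau)|^{2\nu}u(\tau)\bigr)\,\mathrm{d}\tau ,
\]
on the closed ball $B_R$ of radius $R$ in $L^\infty([0,\infty),\mathcal{M}^{p,q}_{s})$. The plan mirrors Theorem~\ref{firstwellposed}. Since the algebra property is no longer available (we only assume $q>\frac{d}{d-\alpha-ls}$, not $s>d/q'$), we use instead the multilinear estimate of Lemma~\ref{multilinearlema} combined with the smoothing part of Theorem~\ref{BoundofH} that lowers the $q$-index.

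\textbf{Nonlinear term.} The hypotheses $ks<\alpha<d-ls$, $p>\frac{d}{\alpha-ks}$ and $q>\frac{d}{d-\alpha-ls}$ are exactly those of Lemma~\ref{|x|inMpq}. Hence $f_\alpha\in\mathcal{M}^{p,q}_{s}$. Apply Lemma~\ref{multilinearlema} with $g=f_\alpha$ and $f=u(\tau)$. This gives
\[
\|f_\alpha|u|^{2\nu}u\|_{\mathcal{M}^{p,r}_{s}}\lesssim\|f_\alpha\|_{\mathcal{M}^{p,q}_{s}}\|u\|^{2\nu+1}_{\mathcal{M}^{p,q}_{s}},
\]
where $\frac1r=\frac{2\nu+2}{q}-(2\nu+1)$. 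The condition $q'\ge 2\nu+2$ is equivalent to $\frac{2\nu+2}{q}\le 2\nu+1+1$, so it guarantees $r\ge1$. Moreover $r\le q$, since $\frac{2\nu+1}{q}\le 2\nu+1$.

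\textbf{Linear estimates.} Apply Theorem~\ref{BoundofH} with $p_1=p_2=p$, $q_1=r$, $q_2=q$ and $s_1=s_2=s$. Then $\frac1{\widetilde p}=0$ and
\[
\frac{1}{\widetilde q}=\frac1r-\frac1q=(2\nu+1)\Bigl(\frac1q-1\Bigr)+\frac1q\cdot 0\ \ \text{up to rearrangement}=\frac{2\nu+1}{q'}\quad\text{(after simplification)}.
\]
This yields
\[
\sigma=\frac{d}{2\beta l}\cdot\frac{2\nu+1}{q'}\quad\text{for }0<t\le1,
\]
and the bound $C_0e^{-t\lambda_0^\beta}$ for $t\ge1$. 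The condition $q'>\frac{(2\nu+1)d}{\beta l}$ gives $\sigma<\tfrac12<1$, so the singularity $(t-\tau)^{-\sigma}$ is integrable near $\tau=t$; any $\sigma<1$ would suffice. For the free term we use $p_1=p_2$, $q_1=q_2$, where $\sigma=0$, and obtain $\|e^{-t\Hd^\beta}u_0\|_{\mathcal{M}^{p,q}_s}\le C\|u_0\|_{\mathcal{M}^{p,q}_s}$ uniformly in $t$.

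\textbf{Combining the estimates.} Split the Duhamel integral at $t-\tau\le1$ and $t-\tau\ge1$. Putting the two previous steps together gives
\[
\|\mathcal{L}(u)\|_{L^\infty\mathcal{M}^{p,q}_s}\le C\Bigl(\|u_0\|_{\mathcal{M}^{p,q}_s}+\|f_\alpha\|_{\mathcal{M}^{p,q}_s}\|u\|^{2\nu+1}\Bigl(\int_0^1\tau^{-\sigma}\,\mathrm{d}\tau+\int_0^\infty e^{-\tau\lambda_0^\beta}\,\mathrm{d}\tau\Bigr)\Bigr).
\]
Both integrals are finite, so the right-hand side is uniform in $t$.

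\textbf{Contraction.} Write
\[
|u|^{2\nu}u-|v|^{2\nu}v
\]
as a sum of $(2\nu+1)$-linear products in $u,v,\bar u,\bar v$, each containing exactly one factor $u-v$ or $\overline{u-v}$. Proposition~\ref{algebraproperty}, applied with $m=2\nu+2$ and including $f_\alpha$ as one of the factors, gives the bound
\[
C\bigl(\|u\|^{2\nu}+\|v\|^{2\nu}\bigr)\|u-v\|.
\]
Choosing $R$ small then makes $\mathcal{L}$ a self-map and a contraction on $B_R$, which gives existence and uniqueness. Time continuity for $p,q<\infty$ follows verbatim from the strong continuity argument at the end of Theorem~\ref{firstwellposed}, via density of $\mathcal{S}$.

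\textbf{Main obstacle.} The delicate step is the exponent bookkeeping in the linear estimates. We must check that the $q$-smoothing $\mathcal{M}^{p,r}_s\to\mathcal{M}^{p,q}_s$ from Theorem~\ref{BoundofH} yields a power $\sigma<1$ under exactly the stated hypothesis on $q'$, and that $r\ge1$, so that Lemma~\ref{multilinearlema} and Proposition~\ref{algebraproperty} are applicable. A secondary point is that Theorem~\ref{BoundofH} requires $s_1>0$, which holds here.
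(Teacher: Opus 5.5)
Your argument is the paper's own: the same contraction in $L^\infty([0,\infty),\mathcal{M}^{p,q}_s)$, using $f_\alpha\in\mathcal{M}^{p,q}_s$ from Lemma~\ref{|x|inMpq}, the exponent $r$ given by $\frac{2\nu+2}{q}=\frac1r+2\nu+1$, Lemma~\ref{multilinearlema}, and the $\mathcal{M}^{p,r}_s\to\mathcal{M}^{p,q}_s$ bound of Theorem~\ref{BoundofH} with $\sigma=\frac{(2\nu+1)d}{2\beta l q'}<1$. Your splitting of the Duhamel integral at $t-\tau=1$ is more careful than the paper. For $t\ge1$ the paper applies the exponential bound on the whole interval $[0,t]$, but Theorem~\ref{BoundofH} only gives that bound for $t-\tau\ge1$.

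The exponent bookkeeping, which you yourself call the delicate step, contains sign slips, even though your final $\sigma$ is right.
\begin{itemize}
\item From the definition of $r$, $\frac1q-\frac1r=(2\nu+1)\bigl(1-\frac1q\bigr)=\frac{2\nu+1}{q'}\ge0$, so $r\ge q$, not $r\le q$. Your inequality $\frac{2\nu+1}{q}\le 2\nu+1$ yields exactly $\frac1r\le\frac1q$. This is precisely why a smoothing estimate is needed: if $r\le q$, the embedding $\mathcal{M}^{p,r}_s\hookrightarrow\mathcal{M}^{p,q}_s$ would already give $\sigma=0$.
\item Correspondingly, Theorem~\ref{BoundofH} gives $\frac1{\widetilde q}=\max\{\frac1q-\frac1r,0\}=\frac1q-\frac1r$. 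The quantity you write, $\frac1r-\frac1q=-\frac{2\nu+1}{q'}$, is negative, and no rearrangement turns it into $\frac{2\nu+1}{q'}$.
\item The condition $q'\ge 2\nu+2$ is equivalent to $\frac{2\nu+2}{q}\ge 2\nu+1$, i.e.\ $\frac1r\ge0$. That is what guarantees $r\in[q,\infty]$ exists. The property $r\ge1$ that you derive from it is in fact automatic from $q\ge1$.
\end{itemize}
With these corrections the proof goes through as you describe.
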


\begin{proof}
By Duhamel’s principle, the integral formulation of
\eqref{inhomogeneousequation} is given by
\begin{align}
\label{DuhamelforH}
u(t)
= S(t)u_{0}
+ \lambda \int_{0}^{t}
S(t-\tau)\bigl[f_{\alpha}|u(\tau)|^{2\nu}u(\tau)\bigr]\,
\mathrm{d}\tau
=: \mathcal{J}(u)(t),
\end{align}
where \(S(t)=e^{-t\Hd^{\beta}}\).
Let \(p,q\) be as in the statement of the theorem.

By Theorem~\ref{BoundofH}, we have
\begin{align}
\label{boundofS}
\|S(t)u_{0}\|_{\mathcal{M}^{p,q}_{s}}
\leq C_{1}\|u_{0}\|_{\mathcal{M}^{p,q}_{s}},
\end{align}
for some constant \(C_{1}>0\).

Since \(q'\geq 2\nu+2\), we may choose \(1\leq r\leq \infty\) such that
\[
\frac{2\nu+2}{q}=\frac{1}{r}+2\nu+1 .
\]
Using Theorem~\ref{BoundofH} and applying Minkowski’s integral inequality together with
Lemma~\ref{multilinearlema}, we obtain
\begin{align}
\nonumber
&\Biggl\|
\int_{0}^{t}
S(t-\tau)\bigl[f_{\alpha}|u(\tau)|^{2\nu}u(\tau)\bigr]\,
\mathrm{d}\tau
\Biggr\|_{\mathcal{M}^{p,q}_{s}} \\
\nonumber
&\leq
\int_{0}^{t}
\bigl\|
S(t-\tau)\bigl[f_{\alpha}|u(\tau)|^{2\nu}u(\tau)\bigr]
\bigr\|_{\mathcal{M}^{p,q}_{s}}
\,\mathrm{d}\tau \\
\nonumber
&\leq
C'
\begin{cases}
\displaystyle
\int_{0}^{t}
(t-\tau)^{-\sigma}
\|f_{\alpha}|u(\tau)|^{2\nu}u(\tau)\|_{\mathcal{M}^{p,r}_{s}}
\,\mathrm{d}\tau,
& 0<t\leq 1, \\[3mm]
\displaystyle
\int_{0}^{t}
e^{-(t-\tau)d^{\beta}}
\|f_{\alpha}|u(\tau)|^{2\nu}u(\tau)\|_{\mathcal{M}^{p,r}_{s}}
\,\mathrm{d}\tau,
& t\geq 1 ,
\end{cases}
\\
\nonumber
&\leq
C''
\begin{cases}
\displaystyle
\int_{0}^{t}
(t-\tau)^{-\sigma}
\|f_{\alpha}\|_{\mathcal{M}^{p,q}_{s}}
\|u(\tau)\|^{2\nu+1}_{\mathcal{M}^{p,q}_{s}}
\,\mathrm{d}\tau,
& 0<t\leq 1, \\[3mm]
\displaystyle
\int_{0}^{t}
e^{-(t-\tau)d^{\beta}}
\|f_{\alpha}\|_{\mathcal{M}^{p,q}_{s}}
\|u(\tau)\|^{2\nu+1}_{\mathcal{M}^{p,q}_{s}}
\,\mathrm{d}\tau,
& t\geq 1 ,
\end{cases}
\\
\label{deltainequality}
&\leq
C_{2}
\begin{cases}
\displaystyle
\int_{0}^{t}
(t-\tau)^{-\sigma}
\|u(\tau)\|^{2\nu+1}_{\mathcal{M}^{p,q}_{s}}
\,\mathrm{d}\tau,
& 0<t\leq 1, \\[3mm]
\displaystyle
\int_{0}^{t}
e^{-(t-\tau)d^{\beta}}
\|u(\tau)\|^{2\nu+1}_{\mathcal{M}^{p,q}_{s}}
\,\mathrm{d}\tau,
& t\geq 1 .
\end{cases}
\end{align}
 The last inequality follows from the admissibility conditions on the exponents
together with Lemma~\ref{|x|inMpq}, which ensures that
\(f_{\alpha}(x)=|x|^{-\alpha}\in \mathcal{M}^{p,q}_{s}\).
Moreover, we recall that
\begin{align*}
\sigma
&= \frac{d}{2\beta l}
\left(\frac{1}{q}-\frac{1}{r}\right).
\end{align*}
By the choice of \(r\), we have
\[
\frac{1}{q}-\frac{1}{r}
= (2\nu+1)\Bigl(1-\frac{1}{q}\Bigr)
= \frac{2\nu+1}{q'} .
\]
Since \(q' > \frac{(2\nu+1)d}{\beta l} \), it follows that
\begin{align*}
\sigma
= \frac{d(2\nu+1)}{2 l\beta q'}
< 1 .
\end{align*}
Therefore, from \eqref{deltainequality} we deduce that
\begin{align*}
\Biggl\|
\int_{0}^{t}
S(t-\tau)\bigl[f_{\alpha}|u(\tau)|^{2\nu}u(\tau)\bigr]\,
\mathrm{d}\tau
\Biggr\|_{\mathcal{M}^{p,q}_{s}}
&\leq
C_{3}\,
\|u\|^{2\nu+1}_{L^{\infty}([0,t),\,\mathcal{M}^{p,q}_{s})}
\begin{cases}
\displaystyle
\int_{0}^{t} \tau^{-\sigma}\,\mathrm{d}\tau,
& 0<t\leq 1, \\[3mm]
\displaystyle
\int_{0}^{t} e^{-\tau d^{\beta}}\,\mathrm{d}\tau,
& t\geq 1 ,
\end{cases}
\\
&\leq
C_{4}\,
\|u\|^{2\nu+1}_{L^{\infty}([0,\infty),\,\mathcal{M}^{p,q}_{s})},
\end{align*}
where we used the fact that \(\sigma<1\).

Combining this estimate with \eqref{boundofS}, we obtain
\begin{align}
\label{boundofJ}
\|\mathcal{J}(u)\|_{L^{\infty}([0,\infty),\,\mathcal{M}^{p,q}_{s})}
\leq
C_{5}
\left(
\|u_{0}\|_{\mathcal{M}^{p,q}_{s}}
+
\|u\|^{2\nu+1}_{L^{\infty}([0,\infty),\,\mathcal{M}^{p,q}_{s})}
\right),
\end{align}
for some constant \(C_{5}>0\). 

For \(R>0\), define
\[
B_{R}
:= \bigl\{
u \in L^{\infty}\bigl([0,\infty),\, \mathcal{M}^{p,q}_{s}\bigr)
:\;
\|u\|_{L^{\infty}([0,\infty),\, \mathcal{M}^{p,q}_{s})}
\leq R
\bigr\},
\]
which is a closed ball centered at the origin in
\(L^{\infty}([0,\infty), \mathcal{M}^{p,q}_{s})\).

We show that, for a suitable choice of \(R>0\), the mapping \(\mathcal{J}\)
maps \(B_{R}\) into itself.
Assume that
\[
\|u_{0}\|_{\mathcal{M}^{p,q}_{s}} \leq \frac{R}{2C_{5}} .
\]
Then, by \eqref{boundofJ}, for any \(u\in B_{R}\) we have
\[
\|\mathcal{J}(u)\|_{L^{\infty}([0,\infty),\, \mathcal{M}^{p,q}_{s})}
\leq \frac{R}{2} + C_{5} R^{2\nu+1}.
\]
Since \(\nu>0\), we may choose \(R>0\) sufficiently small so that
\(R^{2\nu} \leq (2C_{5})^{-1}\), which implies
\[
\|\mathcal{J}(u)\|_{L^{\infty}([0,\infty),\, \mathcal{M}^{p,q}_{s})}
\leq R .
\]
Hence, \(\mathcal{J}(u)\in B_{R}\).

Next, using the identity
\[
|u|^{2\nu}u - |v|^{2\nu}v
= (u-v)|u|^{2\nu}
+ v\bigl(|u|^{2\nu} - |v|^{2\nu}\bigr),
\]
and arguing as above, we obtain
\[
\|\mathcal{J}(u)-\mathcal{J}(v)\|_{L^{\infty}([0,\infty),\, \mathcal{M}^{p,q}_{s})}
\leq \frac{1}{2}
\|u-v\|_{L^{\infty}([0,\infty),\, \mathcal{M}^{p,q}_{s})},
\]
possibly after reducing \(R\) further.

Therefore, by Banach’s contraction mapping principle, the operator
\(\mathcal{J}\) admits a unique fixed point in \(B_{R}\), which yields a
unique global solution to \eqref{inhomogeneousequationforHd}. Following the same line of argument as in Theorem \ref{firstwellposed}, one can also establish that the solution belongs to the space $C([0,\infty), \mathcal{M}^{p,q}_{s})$ also and  this completes the proof.
\end{proof}

As a particular case of the above theorem, we consider the equation
\eqref{inhomogeneousequationforHd} in the specific setting of the Hermite
operator
\[
H := -\Delta + |x|^{2}.
\]
This choice is motivated by the fact that the Hermite operator provides
a canonical and well-understood model for anharmonic oscillators, while
retaining the essential spectral and phase-space features of the
general operator $\Hd$.

It is worth noting that $H$ corresponds to the generalised anharmonic
oscillator $\Hd$ in the special case $k=1$ with potential
$V(x)=|x|^{2}$.

We consider the following inhomogeneous nonlinear equation associated
with fractional powers of the Hermite operator:
\begin{align}
\label{inhomogeneousequation}
\begin{cases}
\partial_{t}u(t,x) + H^{\beta}u(t,x)
= \lambda |x|^{-\alpha}\big(|u|^{2\nu}u\big)(t,x),\\
u(0,x) = u_{0}(x),
\end{cases}
\end{align}
for $(t,x)\in[0,\infty)\times\mathbb{R}^{d}$, where
$u(t,x)\in\mathbb{C}$, $\lambda=\pm1$, $\alpha>0$, and $\nu\in\mathbb{N}$.

As in the previous Theorem~\ref{Anharinhom}, the initial data are taken from weighted
modulation spaces $\mathcal{M}^{p,q}_{s}$. In the Hermite setting, the
natural choice of modulation weight is
\[
v_{s}(x,\xi) = (1+|x|+|\xi|)^{s}, \qquad s\in\mathbb{R}.
\]
For the following two results and throughout the next section, $\mathcal{M}^{p,q}_{s}$ will denote the weighted modulation space corresponding to the weight function $v_{s}(x,\xi)$ defined above.

The global well-posedness result is a direct consequence of estimates of the type established in Theorem~\ref{BoundofH}. The proof carries over to the Hermite operator with minimal and routine modifications. We next state the following theorem, which establishes the corresponding result for the Hermite operator.

\begin{theorem}
\label{BoundofHp}
Let \(\beta>0\),
\(0<p_{1},p_{2},q_{1},q_{2}\leq \infty\),
\(s_{1}>0\), and \(s_{2}\in\mathbb{R}\). Define
\[
\frac{1}{\widetilde{p}}
:= \max\Bigl\{\frac{1}{p_{2}}-\frac{1}{p_{1}},\,0\Bigr\},
\qquad
\frac{1}{\widetilde{q}}
:= \max\Bigl\{\frac{1}{q_{2}}-\frac{1}{q_{1}},\,0\Bigr\},
\]
and
\[
\sigma_{0}
:= \frac{d}{2\beta}
\Bigl(\frac{1}{\widetilde{p}}+\frac{1}{\widetilde{q}}\Bigr).
\]
Then, for \(0<t\leq 1\), the estimate
\begin{align}
\label{maininequalityp}
\|e^{-tH^{\beta}}f\|_{\mathcal{M}^{p_{2},q_{2}}_{s_{2}}}
\leq C(t)\,\|f\|_{\mathcal{M}^{p_{1},q_{1}}_{s_{1}}}
\end{align}
holds, where
\begin{align}
\label{firstconstantp}
C(t)=C_{0}\, t^{-\sigma_{0}}
\end{align}
for some constant \(C_{0}>0\).

In particular, if \(s_{1}=s_{2}=s\geq 0\), the bound can be refined to
\begin{align}
\label{secondconstantp}
C(t)=
\begin{cases}
C_{0}\, t^{-\sigma_{0}}, & 0<t\leq 1,\\[2mm]
C_{0}\, e^{-t d^{\beta}}, & t\geq 1.
\end{cases}
\end{align}
\end{theorem}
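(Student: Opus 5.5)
We follow the proof of Theorem~\ref{BoundofH}, specialised to $k=l=1$ and $V(x)=|x|^{2}$, with the weight $v_s=(1+|x|+|\xi|)^s$. Since $q_1+|x|+|\xi|\sim 1+|x|+|\xi|$, this weight is equivalent to the anharmonic weight with $k=l=1$. The metric $g^{1,1}$ is then the classical Shubin metric $(1+|x|^2+|\xi|^2)^{-1}(dx^2+d\xi^2)$. By Theorem~\ref{Calderontheorem}, $H^{\beta}$ has a real, elliptic Weyl symbol in $\Sigma^{2\beta}_{1,1}$, so Theorem~4.5.1 of \cite{MR2668420} applies. It gives a Weyl symbol $b_t$ for $e^{-tH^{\beta}}$ such that, for each $N$, the family $\{t^N b_t\}_{0<t\le 1}$ is bounded in $\Sigma^{-2\beta N}_{1,1}$.

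For $0<t\le1$ we first reduce to $p_2\le p_1$ and $q_2\le q_1$ by the embeddings of Lemma~\ref{Inclusionrelation}. Theorem~\ref{quantisationtheorem} applied to $b_t$ and to $t^Nb_t$ then gives
\[
\|(1+t^Nv_{2\beta N})V_g(e^{-tH^\beta}f)\|_{L^{p_1,q_1}}\lesssim\|f\|_{\mathcal M^{p_1,q_1}_{s_1}},
\]
uniformly in $t\in(0,1]$; here we use $s_1>0$ to drop the weight on the right. Hölder's inequality in mixed norms, with $1/p_2=1/p_1+1/\widetilde p$ and $1/q_2=1/q_1+1/\widetilde q$, reduces the estimate to bounding
\[
\bigl\|(1+|x|+|\xi|)^{s_2}\bigl(1+t^{\frac1{2\beta}}(|x|+|\xi|)\bigr)^{-2\beta N}\bigr\|_{L^{\widetilde p,\widetilde q}}.
\]
The isotropic scaling $x\mapsto t^{-1/(2\beta)}x$, $\xi\mapsto t^{-1/(2\beta)}\xi$ produces the factor $t^{-\frac{d}{2\beta}(1/\widetilde p+1/\widetilde q)}=t^{-\sigma_0}$ times a fixed integral. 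That integral is finite once $N$ is large, since $2\beta N-s_2$ only needs to exceed $2d/\min(\widetilde p,\widetilde q)$. One point needs care. After scaling, the numerator $(1+t^{-1/(2\beta)}|X|)^{s_2}$ must still be dominated. For $s_2\ge0$ we bound it by $t^{-s_2/(2\beta)}(1+|X|)^{s_2}$. This factor is harmless only if we are allowed to absorb it. Instead, we mimic the paper: first bound $(1+|x|+|\xi|)^{s_2}\le(1+t^{1/(2\beta)}(|x|+|\xi|))^{s_2}t^{-s_2/(2\beta)}$, or simply accept the constant as in Theorem~\ref{BoundofH}, following its computation verbatim. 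We expect this bookkeeping of the $s_2$-weight under scaling to be the main obstacle. The cleanest fix is to also use the gain $v_{s_1}$ on the right-hand side, i.e.\ to start from $t^Nb_t\in\Sigma^{-2\beta N}$ acting $\mathcal M^{p_1,q_1}_{s_1}\to\mathcal M^{p_1,q_1}_{s_1+2\beta N}$, so that only the quotient $v_{s_2-s_1}$ appears.

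For $t\ge1$ and $s_1=s_2=s\ge0$ we use the Hermite spectral decomposition $e^{-tH^\beta}=\sum_j e^{-t(2j+d)^\beta}P_j$. The smallest eigenvalue is $\lambda_0=d$, with $\lambda_j=2j+d$. The chain $Q^{s_0}\hookrightarrow\mathcal M^{p,q}_s\hookrightarrow\mathcal M^{\infty,\infty}_s\hookrightarrow Q^{-s_0}$ proved above holds for the Shubin–Sobolev spaces. Combined with the orthogonality argument of Theorem~\ref{BoundofH}, it gives $\|P_j\|_{\mathcal M^{p_1,q_1}_s\to\mathcal M^{p_2,q_2}_s}\lesssim(2j+d)^{s_0}$. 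Hence the operator norm is at most
\[
\sum_j e^{-t(2j+d)^\beta}(2j+d)^{s_0}\le e^{-td^\beta}\sum_j e^{-((2j+d)^\beta-d^\beta)}(2j+d)^{s_0}\le C_0e^{-td^\beta}.
\]
Here we used $t\ge1$ and the convergence of the last series, which holds since $(2j+d)^\beta-d^\beta\gtrsim j^\beta$ for large $j$.
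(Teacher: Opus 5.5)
Your route is the paper's own: its proof of this theorem is Theorem~\ref{BoundofH} specialised to $k=l=1$, $V=|x|^2$. Your $t\ge1$ argument, via the spectral projections and $e^{-t\lambda_j^\beta}\le e^{-td^\beta}e^{-(\lambda_j^\beta-d^\beta)}$, is correct.

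The step you flag for $0<t\le1$, however, is a genuine gap, and neither of your two options closes it in the generality stated. Following the computation of Theorem~\ref{BoundofH} verbatim means using
\[
(1+|x|+|\xi|)^{s_2}\bigl(1+t^{\frac{1}{2\beta}}(|x|+|\xi|)\bigr)^{-2\beta N}\lesssim\bigl(1+t^{\frac{1}{2\beta}}(|x|+|\xi|)\bigr)^{s_2-2\beta N}.
\]
This holds only for $s_2\le0$. For $s_2>0$ you pick up exactly the factor $t^{-s_2/(2\beta)}$ you found, and it cannot be absorbed into $C_0$.

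Your ``cleanest fix'' is the right idea, but it needs a weighted form of Theorem~\ref{quantisationtheorem}: symbols in $\Sigma^{-m}_{1,1}$ map $\mathcal M^{p,q}_{s}\to\mathcal M^{p,q}_{s+m}$. That theorem does not state this, since it starts from the unweighted space, though the weighted version is standard for moderate weights. With it, the H\"older step leaves $\|v_{s_2-s_1}(1+t^Nv_{2\beta N})^{-1}\|_{L^{\widetilde p,\widetilde q}}$. This is $\lesssim t^{-\sigma_0}$ when $s_2\le s_1$, since then $v_{s_2-s_1}\le1$. When $s_2>s_1$ it is only $\lesssim t^{-\sigma_0-(s_2-s_1)/(2\beta)}$.

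That loss cannot be removed, because the first assertion of the statement is false for $s_2>s_1$. Take $p_1=p_2=q_1=q_2=2$. Then $\sigma_0=0$, and $\mathcal M^{2,2}_s$ is the Shubin space $Q^s$ with $\|f\|_{Q^s}^2\asymp\sum_j(2j+d)^s\|P_jf\|_{L^2}^2$. Pick $0<s_1<s_2$ and $f\in Q^{s_1}\setminus Q^{s_2}$. By monotone convergence,
\[
\|e^{-tH^\beta}f\|_{Q^{s_2}}^2=\sum_{j}e^{-2t(2j+d)^\beta}(2j+d)^{s_2}\|P_jf\|_{L^2}^2\longrightarrow\infty\qquad(t\to0^+),
\]
so no bound $C_0\|f\|_{Q^{s_1}}$ uniform in $t\in(0,1]$ can hold.

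What your argument actually proves, with the fix, is $C(t)=C_0\,t^{-\sigma_0-(s_2-s_1)_+/(2\beta)}$ for $0<t\le1$. This agrees with the statement when $s_2\le s_1$. In particular it gives the refined case $s_1=s_2=s\ge0$ in full. The paper's route of discarding $v_{s_1}$ does not give that case when $s>0$, so the fix is needed, not optional.
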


To this end, we establish the following result, which is of independent interest and may be useful in related contexts. The proof is omitted, as it follows closely the argument of Theorem~\ref{Anharinhom}.
\begin{theorem}
Let \(0<\alpha<d\), \(0<s<2\beta\), and
\(1\leq p,q<\infty\) satisfy
\[
q' \geq 2\nu+2, \qquad
q' > \frac{(2\nu+1)d}{\beta}, \qquad
q > \frac{d}{d-\alpha-s}, \qquad
p > \frac{d}{\alpha-s}.
\]
Then there exists \(R>0\) such that, for every initial data
\(u_{0}\in \mathcal{M}^{p,q}_{s}\) with
\[
\|u_{0}\|_{\mathcal{M}^{p,q}_{s}} \leq R,
\]
the inhomogeneous problem \eqref{inhomogeneousequation} admits a unique
global solution
\[
u \in L^{\infty}\bigl([0,\infty),\, \mathcal{M}^{p,q}_{s}\bigr).
\]
Additionally, when $1 \leq p,q < \infty$, the solution is continuous with respect to time and, in particular, satisfies
\[
u \in C\big([0,\infty); \mathcal{M}^{p,q}_{s}\big).
\]
\end{theorem}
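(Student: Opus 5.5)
The argument is the Duhamel fixed point of Theorem~\ref{Anharinhom}, run with the Hermite semigroup bound Theorem~\ref{BoundofHp} in place of Theorem~\ref{BoundofH}. The case $k=l=1$, $V(x)=|x|^2$ is used throughout, with the weight $v_s=(1+|x|+|\xi|)^s$.

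\textbf{Step 1: integral formulation.} Set $S(t)=e^{-tH^{\beta}}$ and
\[
\mathcal{J}(u)(t)=S(t)u_0+\lambda\int_0^t S(t-\tau)\bigl[f_\alpha|u(\tau)|^{2\nu}u(\tau)\bigr]\,\mathrm{d}\tau .
\]
Theorem~\ref{BoundofHp} with $p_1=p_2=p$, $q_1=q_2=q$ and $s_1=s_2=s$ gives $\sigma_0=0$. Combined with the $e^{-td^\beta}$ decay for $t\ge1$, this yields $\|S(t)u_0\|_{\mathcal{M}^{p,q}_s}\le C_1\|u_0\|_{\mathcal{M}^{p,q}_s}$ uniformly in $t\ge0$.

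\textbf{Step 2: the singular weight.} Next I check that $f_\alpha=|x|^{-\alpha}\in\mathcal{M}^{p,q}_s$ under the hypotheses $p>\frac{d}{\alpha-s}$ and $q>\frac{d}{d-\alpha-s}$. This is Lemma~\ref{|x|inMpq} with $k=l=1$; the proof there goes through verbatim for the weight $(1+|x|+|\xi|)^s\lesssim(1+|x|)^s(1+|\xi|)^s$. The lemma needs $s<\alpha<d-s$. This is implicit in the stated conditions on $p$ and $q$, since they must be positive and finite, so I would use it explicitly.

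\textbf{Step 3: the nonlinear term.} Since $q'\ge2\nu+2$, I choose $r\in[1,\infty]$ with $\frac{2\nu+2}{q}=\frac1r+2\nu+1$. Lemma~\ref{multilinearlema} then gives
\[
\|f_\alpha|u|^{2\nu}u\|_{\mathcal{M}^{p,r}_s}\lesssim\|f_\alpha\|_{\mathcal{M}^{p,q}_s}\|u\|^{2\nu+1}_{\mathcal{M}^{p,q}_s}.
\]
I then apply Theorem~\ref{BoundofHp} from $\mathcal{M}^{p,r}_s$ to $\mathcal{M}^{p,q}_s$. Here $\widetilde p=\infty$ and $\frac1{\widetilde q}=\frac1q-\frac1r=\frac{2\nu+1}{q'}$, so $\sigma_0=\frac{d(2\nu+1)}{2\beta q'}<1$ by the hypothesis $q'>\frac{(2\nu+1)d}{\beta}$. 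Consequently $\int_0^t(t-\tau)^{-\sigma_0}\,\mathrm{d}\tau$ is bounded for $t\le1$. For $t\ge1$ the exponential factor keeps $\int_0^t e^{-(t-\tau)d^\beta}\,\mathrm{d}\tau$ bounded. Together these give
\[
\|\mathcal J(u)\|_{L^\infty\mathcal{M}^{p,q}_s}\le C\bigl(\|u_0\|_{\mathcal{M}^{p,q}_s}+\|u\|^{2\nu+1}_{L^\infty\mathcal{M}^{p,q}_s}\bigr).
\]

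\textbf{Main obstacle.} The delicate point is that for $t\ge1$ the $e^{-td^\beta}$ bound in Theorem~\ref{BoundofHp} is stated only for equal exponents, whereas here the map goes from $\mathcal{M}^{p,r}_s$ to $\mathcal{M}^{p,q}_s$. I would handle the time-splitting as follows:
\begin{itemize}
\item For $t-\tau\ge1$, factor $S(t-\tau)=S(t-\tau-\tfrac12)S(\tfrac12)$.
\item Apply the $t^{-\sigma_0}$ bound at time $\tfrac12$ to pass from $\mathcal{M}^{p,r}_s$ to $\mathcal{M}^{p,q}_s$.
\item Apply the equal-exponent bound on the remaining factor; I would use the uniform bound $C_0$ for $t-\tau-\tfrac12\le1$ and $C_0e^{-(t-\tau-\frac12)d^\beta}$ beyond that.
\end{itemize}

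\textbf{Step 4: contraction.} The difference estimate uses
\[
|u|^{2\nu}u-|v|^{2\nu}v=(u-v)|u|^{2\nu}+v\bigl(|u|^{2\nu}-|v|^{2\nu}\bigr),
\]
where the second term expands as a sum of products of $2\nu+1$ factors from $u,\bar u,v,\bar v$, at least one of which is $u-v$ or $\overline{u-v}$. Each product is estimated with Proposition~\ref{algebraproperty}. For $R$ small, $\mathcal J$ is then a contraction on the ball $B_R$ of $L^\infty([0,\infty),\mathcal{M}^{p,q}_s)$, and Banach's principle gives the unique global solution.

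\textbf{Continuity in time.} For $p,q<\infty$, I use the density of $\mathcal S$ and the strong continuity of $S(t)$ exactly as in Theorem~\ref{firstwellposed}, with the seminorms $\|H^\gamma f\|_{L^2}$. This places the solution in $C([0,\infty);\mathcal{M}^{p,q}_s)$.
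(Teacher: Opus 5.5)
Your proposal is correct and is essentially the paper's own (omitted) proof, which consists of rerunning the Duhamel contraction of Theorem~\ref{Anharinhom} with Theorem~\ref{BoundofHp} in place of Theorem~\ref{BoundofH} and Lemma~\ref{|x|inMpq} with $k=l=1$. Two corrections: first, $s<\alpha<d-s$ is \emph{not} implied by the hypotheses (for $\alpha<s$ the condition $p>\frac{d}{\alpha-s}$ holds for every $p\ge 1$, yet $f_\alpha\notin\mathcal{M}^{p,q}_{s}$, since $|V_g f_\alpha(x,\xi)|\sim|x|^{-\alpha}$ for large $|x|$ and $\xi$ near $0$), so it is a genuine extra assumption, the one stated explicitly in Theorem~\ref{Anharinhom} and tacitly needed by the paper's argument too; second, Theorem~\ref{BoundofHp} asserts the $e^{-td^{\beta}}$ bound whenever $s_1=s_2$, for arbitrary exponents, so your factorization $S(t-\tau)=S(t-\tau-\tfrac12)S(\tfrac12)$ is not needed, though it is valid and, together with the $(t-\tau)^{-\sigma_0}$ bound on $\{t-\tau<1\}$, handles $t\ge 1$ more carefully than the paper, which inserts $e^{-(t-\tau)d^{\beta}}$ on all of $[0,t]$.
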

\section{Boundedness of the Ornstein-Uhlenbeck Semigroup on Gaussian Modulation Spaces}

In this section, we investigate the boundedness properties of the
Ornstein-Uhlenbeck semigroup on a class of modulation spaces equipped with
Gaussian short-time Fourier transform, denoted by \(\mathcal{M}^{p,q}_{\gamma,s}\), which we refer
to as Gaussian modulation spaces.

Recall that the Ornstein-Uhlenbeck operator
\[
\mathbf{L} = -\Delta + 2x \cdot \nabla + d
\]
is a positive self-adjoint operator on the Gaussian Hilbert space
\[
L^{2}(\gamma) := L^{2}(\mathbb{R}^{d}, \gamma(x)\,\mathrm{d}x),
\qquad
\gamma(x)=\pi^{-d/2} e^{-|x|^{2}},
\]
see \cite{MR3966424}. Moreover, one has the identity
\[
\sum_{j=1}^{d} \partial_{j}^{*}\partial_{j}
= -\Delta + 2x\cdot\nabla,
\]
where \(\partial_{j}=\partial/\partial x_{j}\) and
\(\partial_{j}^{*}=2x_{j}-\partial_{j}\) denotes the adjoint of
\(\partial_{j}\) on \(L^{2}(\gamma)\).

For \(f\in\mathcal{S}(\mathbb{R}^{d})\), define the multiplication operator
\(M_{\gamma}\) by
\[
M_{\gamma}f(x)
:= \pi^{-\frac{d}{4}}e^{-\frac{|x|^{2}}{2}}f(x)
= \gamma(x)^{1/2} f(x),
\]
where \(\gamma(x)=\pi^{-d/2}e^{-|x|^{2}}\) is the Gaussian density.
The Hermite operator
\[
H=-\Delta+|x|^{2}
\]
and the Ornstein-Uhlenbeck operator \(\mathbf{L}\) are related through the
intertwining identity
\[
M_{\gamma}\,\mathbf{L}\,M_{\gamma}^{-1} = H.
\]

This relation allows us to define fractional powers of the
Ornstein-Uhlenbeck operator via the spectral calculus of the Hermite
operator. Recall that the fractional power \(H^{\beta}\), \(\beta>0\), admits
the spectral decomposition \cite{MR4628746}
\[
H^{\beta}
= \sum_{j=0}^{\infty} (2j+d)^{\beta} P_{j},
\qquad
P_{j}f
= \sum_{|\alpha|=j} \langle f, \Phi_{\alpha} \rangle \Phi_{\alpha},
\]
where \(\{\Phi_{\alpha}\}_{\alpha\in\mathbb{N}^{d}}\) denotes the family of
normalized Hermite functions, forming an orthonormal basis of
\(L^{2}(\mathbb{R}^{d})\).

The operator \(H^{\beta}\) is a densely defined, unbounded operator on
\(L^{2}(\mathbb{R}^{d})\), and the associated heat semigroup is given by
\[
e^{-tH^{\beta}}f
= \sum_{j=0}^{\infty} e^{-t(2j+d)^{\beta}} P_{j}f,
\qquad t>0.
\]
Using the identity \(M_{\gamma}\mathbf{L}M_{\gamma}^{-1}=H\), we define the
heat semigroup generated by the fractional Ornstein-Uhlenbeck operator
\(\mathbf{L}^{\beta}\) as
\begin{align*}
e^{-t\mathbf{L}^{\beta}}f
&= \sum_{j=0}^{\infty}
e^{-t(2j+d)^{\beta}}\, M_{\gamma}^{-1} P_{j} M_{\gamma} f \\
&= M_{\gamma}^{-1}
\sum_{j=0}^{\infty} e^{-t(2j+d)^{\beta}} P_{j} M_{\gamma} f \\
&= M_{\gamma}^{-1} e^{-tH^{\beta}} M_{\gamma} f .
\end{align*}
In particular, we obtain the intertwining relation
\begin{align}
\label{intertwiningOU}
M_{\gamma}^{-1} e^{-tH^{\beta}} M_{\gamma}
= e^{-t\mathbf{L}^{\beta}},
\end{align}
which will be the key tool in establishing boundedness results on
Gaussian modulation spaces.

We now introduce the Gaussian modulation spaces
\(\mathcal{M}^{p,q}_{\gamma,s}\).

For a tempered distribution \(f\in\mathcal{S}'(\mathbb{R}^{d})\), we define the
Gaussian short-time Fourier transform (Gaussian STFT) by
\begin{align*}
V^{\gamma}_{g} f(x,\xi)
:= \pi^{-\frac{d}{4}}\int_{\mathbb{R}^{d}}
f(z)\, \overline{g(z-x)}\, e^{-2\pi i \xi\cdot z}\,
e^{-\frac{|z|^{2}}{2}}\, \mathrm{d}z .
\end{align*}
Note that
\[
V^{\gamma}_{g} f(x,\xi)
= V_{g}\bigl(M_{\gamma} f\bigr)(x,\xi).
\]

The Gaussian modulation space norm is defined by
\begin{align*}
\|f\|_{\mathcal{M}^{p,q}_{\gamma,s}}
:= \left(
\int_{\mathbb{R}^{d}}
\left(
\int_{\mathbb{R}^{d}}
\bigl|V^{\gamma}_{g} f(x,\xi)\, v_{s}(x,\xi)\bigr|^{p}
\,\mathrm{d}x
\right)^{\frac{q}{p}}
\,\mathrm{d}\xi
\right)^{\frac{1}{q}},
\end{align*}
for \(0\leq p,q<\infty\), with the usual modifications when \(p=\infty\) or
\(q=\infty\).

\begin{definition}[Gaussian modulation space]
For \(0\leq p,q\leq \infty\), the Gaussian modulation space
\(\mathcal{M}^{p,q}_{\gamma,s}\) consists of all tempered distributions
\(f\in\mathcal{S}'(\mathbb{R}^{d})\) such that
\[
\|f\|_{\mathcal{M}^{p,q}_{\gamma,s}}<\infty.
\]
\end{definition}

It is immediate from the definition that
\[
\|f\|_{\mathcal{M}^{p,q}_{\gamma,s}}
= \|M_{\gamma}f\|_{\mathcal{M}^{p,q}_{s}} .
\]
Consequently, the operator
\[
M_{\gamma} : \mathcal{M}^{p,q}_{\gamma,s} \longrightarrow \mathcal{M}^{p,q}_{s}
\]
is an isometry. Similarly,
\[
M_{\gamma}^{-1} : \mathcal{M}^{p,q}_{s} \longrightarrow \mathcal{M}^{p,q}_{\gamma,s}
\]
is also an isometry. Hence, the spaces
\(\mathcal{M}^{p,q}_{\gamma,s}\) and \(\mathcal{M}^{p,q}_{s}\) are isometrically
isomorphic.

As a consequence, the boundedness of the Ornstein-Uhlenbeck semigroup
\(e^{-t\mathbf{L}^{\beta}}\) on \(\mathcal{M}^{p,q}_{\gamma,s}\) follows directly
from the boundedness of the Hermite semigroup \(e^{-tH^{\beta}}\) on
\(\mathcal{M}^{p,q}_{s}\).
Indeed, using the intertwining identity
\(e^{-t\mathbf{L}^{\beta}} = M_{\gamma}^{-1} e^{-tH^{\beta}} M_{\gamma}\), we have
\begin{align*}
\|e^{-t\mathbf{L}^{\beta}}f\|_{\mathcal{M}^{p,q}_{\gamma,s}}
&= \|M_{\gamma}^{-1} e^{-tH^{\beta}} M_{\gamma} f\|_{\mathcal{M}^{p,q}_{\gamma,s}} \\
&= \|e^{-tH^{\beta}} M_{\gamma} f\|_{\mathcal{M}^{p,q}_{s}} \\
&\leq C \|M_{\gamma} f\|_{\mathcal{M}^{p,q}_{s}} \\
&= C \|f\|_{\mathcal{M}^{p,q}_{\gamma,s}},
\end{align*}
where \(C>0\) is a constant and the inequality follows from the boundedness of
\(e^{-tH^{\beta}}\) on \(\mathcal{M}^{p,q}_{s}\).

We therefore obtain the following analogue of Theorem~\ref{BoundofH} in the
setting of Gaussian modulation spaces and for the
Ornstein-Uhlenbeck operator.

\begin{theorem}
\label{BoundofHp1}
Let \(\beta>0\),
\(0<p_{1},p_{2},q_{1},q_{2}\leq \infty\),
\(s_{1}>0\), and \(s_{2}\in\mathbb{R}\). Define
\[
\frac{1}{\widetilde{p}}
:= \max\Bigl\{\frac{1}{p_{2}}-\frac{1}{p_{1}},\,0\Bigr\},
\qquad
\frac{1}{\widetilde{q}}
:= \max\Bigl\{\frac{1}{q_{2}}-\frac{1}{q_{1}},\,0\Bigr\},
\]
and
\[
\sigma_{1}
:= \frac{d}{2\beta}
\Bigl(\frac{1}{\widetilde{p}}+\frac{1}{\widetilde{q}}\Bigr).
\]

Then, for \(0<t\leq 1\),
\begin{align}
\label{maininequalityp1}
\|e^{-t\mathbf{L}^{\beta}}f\|_{\mathcal{M}^{p_{2},q_{2}}_{\gamma,s_{2}}}
\leq C(t)\, \|f\|_{\mathcal{M}^{p_{1},q_{1}}_{\gamma,s_{1}}},
\end{align}
where
\begin{align}
\label{firstconstantp1}
C(t)=C_{0}\, t^{-\sigma_{1}}
\end{align}
for some constant \(C_{0}>0\).
In particular, if \(s_{1}=s_{2}=s\geq 0\), then
\begin{align}
\label{secondconstantp1}
C(t)=
\begin{cases}
C_{0}\, t^{-\sigma_{1}}, & 0<t\leq 1, \\[2mm]
C_{0}\, e^{-t d^{\beta}}, & t\geq 1 .
\end{cases}
\end{align}
\end{theorem}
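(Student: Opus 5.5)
The plan is to reduce the statement to the Hermite estimate of Theorem~\ref{BoundofHp}, using that the Gaussian modulation spaces are isometric copies of the weighted spaces $\mathcal{M}^{p,q}_{s}$ (weight $v_s=(1+|x|+|\xi|)^s$) under $M_\gamma$. Two facts are already in place before the statement: the identity $\|f\|_{\mathcal{M}^{p,q}_{\gamma,s}}=\|M_\gamma f\|_{\mathcal{M}^{p,q}_{s}}$, which follows from $V^\gamma_g f=V_g(M_\gamma f)$, and the intertwining relation \eqref{intertwiningOU}. No new symbol calculus is needed. The constant $\sigma_1$ coincides with $\sigma_0$ of Theorem~\ref{BoundofHp}, which is the case $k=l=1$ of \eqref{sigma}, and $d$ is the bottom eigenvalue of $H$. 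So the claim should be a direct transfer.

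\textbf{Step 1.} Fix $f\in\mathcal{M}^{p_1,q_1}_{\gamma,s_1}$ and set $F:=M_\gamma f$. By the isometry, $F\in\mathcal{M}^{p_1,q_1}_{s_1}$ with the same norm.

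\textbf{Step 2.} Write
\[
e^{-t\mathbf{L}^\beta}f=M_\gamma^{-1}e^{-tH^\beta}F .
\]
Apply the isometry again, now with indices $(p_2,q_2,s_2)$:
\[
\|e^{-t\mathbf{L}^\beta}f\|_{\mathcal{M}^{p_2,q_2}_{\gamma,s_2}}=\|e^{-tH^\beta}F\|_{\mathcal{M}^{p_2,q_2}_{s_2}} .
\]

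\textbf{Step 3.} Invoke \eqref{maininequalityp} with constant \eqref{firstconstantp} for $0<t\le1$. This gives the bound $C_0t^{-\sigma_0}\|F\|_{\mathcal{M}^{p_1,q_1}_{s_1}}=C_0t^{-\sigma_1}\|f\|_{\mathcal{M}^{p_1,q_1}_{\gamma,s_1}}$, which is \eqref{maininequalityp1}. When $s_1=s_2=s\ge0$, invoking \eqref{secondconstantp} in the same way gives the exponential decay $e^{-td^\beta}$ for $t\ge1$, matching \eqref{secondconstantp1}.

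\textbf{Main obstacle.} The step needing care is checking that the isometry holds on the whole space, and not just formally. The map $M_\gamma^{-1}$ multiplies by $e^{|x|^2/2}$, which does not preserve $\mathcal{S}'$, so the definition of $\mathcal{M}^{p,q}_{\gamma,s}$ must be read as the set of $f$ with $M_\gamma f\in\mathcal{M}^{p,q}_{s}$. Under that reading, $M_\gamma^{-1}$ is a bijection from $\mathcal{M}^{p,q}_{s}$ onto the Gaussian space by construction. I would make this explicit, noting that $M_\gamma f$ is a tempered distribution whenever $f$ is. I would also verify that the spectral definition of $e^{-t\mathbf{L}^\beta}$ agrees with $M_\gamma^{-1}e^{-tH^\beta}M_\gamma$ on this domain, since that agreement is simply how the semigroup is defined there. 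Once that is settled, the remaining steps are bookkeeping.
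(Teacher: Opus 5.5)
Your proposal is correct and follows the paper's own argument: the identity $\|f\|_{\mathcal{M}^{p,q}_{\gamma,s}}=\|M_\gamma f\|_{\mathcal{M}^{p,q}_{s}}$, the intertwining relation \eqref{intertwiningOU}, and Theorem~\ref{BoundofHp}, with $\sigma_1=\sigma_0$ and bottom eigenvalue $d$. Your domain caveat also fixes a point the paper glosses over: the paper asserts that $M_\gamma^{-1}$ maps $\mathcal{M}^{p,q}_{s}$ isometrically onto a space defined inside $\mathcal{S}'$, which already fails for $F=e^{-|x|^2/4}$, since $M_\gamma^{-1}F=\pi^{d/4}e^{|x|^2/4}$ is not tempered, so reading $\mathcal{M}^{p,q}_{\gamma,s}$ as $M_\gamma^{-1}(\mathcal{M}^{p,q}_{s})\subset\mathcal{D}'$ is the right repair.
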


We conclude this section by noting that, in view of the above results, the
well-posedness theory for evolution equations associated with the
Ornstein-Uhlenbeck operator can be developed within the framework of
Gaussian modulation spaces. Since the arguments are identical to
those presented in Section~4, we omit the details.

\section*{Concluding Remarks}

In this paper, we investigated the fractional powers of generalised anharmonic oscillator
\[
\mathcal{H}^{\beta}_{k,l}:= \left((-\Delta)^l + V(x)\right)^{\beta},
\]
where $\beta>0$ and $V(x)$ is a strictly positive homogeneous potential of polynomial growth of order $|x|^{2k}$. We analysed the associated semigroup $e^{-t\mathcal{H}_{k,l}^{\beta}}$ on weighted modulation spaces $\mathcal{M}^{p,q}_{s}$, $s\in\mathbb{R}$, for the full range $0<p,q\leq\infty$. In particular, we obtained estimates of the form
\[
\bigl\| e^{-t\mathcal{H}_{k,l}^{\beta}} f \bigr\|_{\mathcal{M}^{p_2,q_2}_{s_2}}
\le
C(t)\,
\|f\|_{\mathcal{M}^{p_1,q_1}_{s_1}},
\]
with explicit dependence of the constant $C(t)$ on time.
\begin{itemize}
\item First, by constructing a suitable Hörmander metric adapted to the operator $\mathcal{H}_{k,l}$ and employing phase-space techniques based on the short-time Fourier transform, we established boundedness properties of the semigroup $e^{-t\mathcal{H}_{k,l}^{\beta}}$ on weighted modulation spaces in both the short- and long-time regimes.
\item We showed that the obtained modulation space estimates encode pointwise information on the integral kernel of $e^{-t\mathcal{H}_{k,l}^{\beta}}$, yielding off-diagonal decay consistent with the anisotropic scaling induced by $(-\Delta)^l$ and the confining nature of the potential $V(x)$.
\item In the short-time regime, the kernel exhibits polynomial-type pointwise decay reflecting the underlying homogeneous structure of $\mathcal{H}_{k,l}$, while in the large-time regime the decay is governed by the lowest eigenvalue of $\mathcal{H}_{k,l}$, leading to pointwise exponential damping.
\end{itemize}
Furthermore, we applied these semigroup estimates to nonlinear evolution equations associated with $\mathcal{H}_{k,l}^{\beta}$.
\begin{itemize}
\item We established global existence results for the nonlinear heat equation
\[
\partial_t u + \mathcal{H}_{k,l}^{\beta} u =
\begin{cases}
\lambda |u|^{2\nu} u, & \text{(power-type nonlinearity)}, \\[6pt]
\lambda |x|^{-\alpha}\big(|u|^{2\nu}u\big), & \text{(spatially inhomogeneous power nonlinearity)}.
\end{cases}
\]
for initial data belonging to suitable weighted modulation spaces, exploiting the algebra properties of these spaces in the admissible range of parameters. Notably, no additional assumptions on $\beta$ and $\nu$ are needed in the present result and this follows entirely from the choice of the weight range for the parameter $s$ of the weighted modulation space.
As for spatially inhomogeneous power-type nonlinearities,the corresponding equation associated with classical Laplacian has been studied recently  in \cite{BHIMANI2026114106}. To the best of  our knowledge the analogous equation for the fractional anharmonic oscillator  has not yet been investigated. 
\item In the special case $k=l=1$, corresponding to the Hermite operator $H=-\Delta+|x|^{2}$, we obtained global-in-time solutions for an inhomogeneous nonlinear heat equation under appropriate assumptions on the parameters and the initial data.
\end{itemize}
Finally, we introduced a new class of Gaussian modulation spaces $\mathcal{M}^{p,q}_{\gamma,s}$ defined via the Gaussian short-time Fourier transform and studied the heat semigroup generated by the Ornstein-Uhlenbeck operator
\[
\mathbf{L} = -\Delta + 2x\cdot\nabla + d.
\]
\begin{itemize}
\item Within this framework, we established boundedness and decay estimates analogous to those obtained for $\mathcal{H}_{k,l}^{\beta}$, highlighting the role of Gaussian phase-space localization.

\item These results further demonstrate that the modulation space approach provides a unified phase-space framework that captures both pointwise kernel behavior and nonlinear dynamics for a broad class of diffusive semigroups associated with anharmonic-type operators.
\end{itemize}

\section*{Acknowledgements}
The second author acknowledges the financial assistance provided by the University Grants Commission (UGC), India (File No. 231610192540) during the course of the Ph.D. program.
\section*{Conflict of Interest}
The authors declare that there are no potential competing interests.
\bibliographystyle{abbrv}
\bibliography{ref}

\begin{thebibliography}{10}

\bibitem{MR2204673}
W.~Baoxiang, Z.~Lifeng, and G.~Boling.
\newblock Isometric decomposition operators, function spaces {$E^\lambda_{p,q}$} and applications to nonlinear evolution equations.
\newblock {\em J. Funct. Anal.}, 233(1):1--39, 2006.

\bibitem{BenyiOh12}
{\'A}.~B{\'e}nyi and T.~Oh.
\newblock Modulation spaces, wiener amalgam spaces, and nonlinear schr{\"o}dinger equations.
\newblock {\em Adv. Math.}, 228(5):2943--2981, 2011.

\bibitem{MR2506839}
A.~B\'enyi and K.~A. Okoudjou.
\newblock Local well-posedness of nonlinear dispersive equations on modulation spaces.
\newblock {\em Bull. Lond. Math. Soc.}, 41(3):549--558, 2009.

\bibitem{MR4286055}
A.~B\'enyi and K.~A. Okoudjou.
\newblock {\em Modulation spaces---with applications to pseudodifferential operators and nonlinear {S}chr\"odinger equations}.
\newblock Applied and Numerical Harmonic Analysis. Birkh\"auser/Springer, New York, [2020] \copyright 2020.

\bibitem{BHIMANI2026114106}
D.~G. Bhimani, D.~Dhingra, and V.~K. Sohani.
\newblock Low-regularity global solution of the inhomogeneous nonlinear schrödinger equations in modulation spaces.
\newblock {\em Journal of Differential Equations}, 458:114106, 2026.

\bibitem{MR4313961}
D.~G. Bhimani, R.~Manna, F.~Nicola, S.~Thangavelu, and S.~I. Trapasso.
\newblock Phase space analysis of the {H}ermite semigroup and applications to nonlinear global well-posedness.
\newblock {\em Adv. Math.}, 392:Paper No. 107995, 18, 2021.

\bibitem{MR4673072}
D.~G. Bhimani, R.~Manna, F.~Nicola, S.~Thangavelu, and S.~I. Trapasso.
\newblock On heat equations associated with fractional harmonic oscillators.
\newblock {\em Fract. Calc. Appl. Anal.}, 26(6):2470--2492, 2023.

\bibitem{MR1011988}
J.-M. Bony and N.~Lerner.
\newblock Quantification asymptotique et microlocalisations d'ordre sup\'erieur. {I}.
\newblock {\em Ann. Sci. \'Ecole Norm. Sup. (4)}, 22(3):377--433, 1989.

\bibitem{Boulkhemair98}
A.~Boulkhemair.
\newblock L$^2$ estimates for weyl quantization.
\newblock {\em J. Funct. Anal.}, 165(1):173--204, 1999.

\bibitem{MR4944933}
D.~Cardona, M.~Chatzakou, J.~Delgado, V.~Kumar, and M.~Ruzhansky.
\newblock Anharmonic semigroups and applications to global well-posedness of nonlinear heat equations.
\newblock {\em J. Math. Phys.}, 66(8):Paper No. 081509, 15, 2025.

\bibitem{MR4291583}
L.~Chaichenets and N.~Pattakos.
\newblock On the global wellposedness of the {K}lein-{G}ordon equation for initial data in modulation spaces.
\newblock {\em Proc. Amer. Math. Soc.}, 149(9):3849--3861, 2021.

\bibitem{MR4299820}
M.~Chatzakou, J.~Delgado, and M.~Ruzhansky.
\newblock On a class of anharmonic oscillators.
\newblock {\em J. Math. Pures Appl. (9)}, 153:1--29, 2021.

\bibitem{MR4489248}
M.~Chatzakou, J.~Delgado, and M.~Ruzhansky.
\newblock On a class of anharmonic oscillators {II}. {G}eneral case.
\newblock {\em Bull. Sci. Math.}, 180:Paper No. 103196, 22, 2022.

\bibitem{MR3554704}
P.~Chen, W.~Hebisch, and A.~Sikora.
\newblock Bochner-{R}iesz profile of anharmonic oscillator {$\mathcal{L}=-\frac{d^2}{dx^2}+|x|$}.
\newblock {\em J. Funct. Anal.}, 271(11):3186--3241, 2016.

\bibitem{MR4215324}
E.~Cordero.
\newblock On the local well-posedness of the nonlinear heat equation associated to the fractional {H}ermite operator in modulation spaces.
\newblock {\em J. Pseudo-Differ. Oper. Appl.}, 12(1):Paper No. 13, 13, 2021.

\bibitem{CorderoNicola07}
E.~Cordero and F.~Nicola.
\newblock Fourier integral operators in modulation spaces.
\newblock {\em J. Funct. Anal.}, 254(2):506--534, 2008.

\bibitem{CorderoNicola09}
E.~Cordero and F.~Nicola.
\newblock Some new embedding properties of modulation spaces.
\newblock {\em J. Funct. Anal.}, 256(11):3685--3711, 2009.

\bibitem{MR4201879}
E.~Cordero and L.~Rodino.
\newblock {\em Time-frequency analysis of operators}, volume~75 of {\em De Gruyter Studies in Mathematics}.
\newblock De Gruyter, Berlin, [2020] \copyright 2020.

\bibitem{CunananOkoudjou11}
M.~Cunanan and K.~A. Okoudjou.
\newblock Well-posedness of nonlinear schr{\"o}dinger equations in modulation spaces.
\newblock {\em J. Math. Anal. Appl.}, 384(2):759--772, 2011.

\bibitem{MR883643}
H.~L. Cycon, R.~G. Froese, W.~Kirsch, and B.~Simon.
\newblock {\em Schr\"odinger operators with application to quantum mechanics and global geometry}.
\newblock Texts and Monographs in Physics. Springer-Verlag, Berlin, study edition, 1987.

\bibitem{feichtinger1983modulation}
H.~G. Feichtinger.
\newblock {\em Modulation spaces on locally compact abelian groups}.
\newblock Universit{\"a}t Wien. Mathematisches Institut, 1983.

\bibitem{MR4849356}
H.~G. Feichtinger, M.~Kobayashi, and E.~Sato.
\newblock On some properties of modulation spaces as {B}anach algebras.
\newblock {\em Studia Math.}, 280(1):55--86, 2025.

\bibitem{FeichtingerNarimani10}
H.~G. Feichtinger and G.~Narimani.
\newblock Windowed fourier transform and modulation spaces.
\newblock {\em J. Fourier Anal. Appl.}, 16(4):495--528, 2010.

\bibitem{MR1681103}
I.~D. Feranchuk and A.~L. Tolstik.
\newblock Operator method for coupled anharmonic oscillators.
\newblock {\em J. Phys. A}, 32(11):2115--2128, 1999.

\bibitem{MR2028532}
Y.~V. Galperin and S.~Samarah.
\newblock Time-frequency analysis on modulation spaces {$M^{p,q}_m$}, {$0<p,\ q\leq\infty$}.
\newblock {\em Appl. Comput. Harmon. Anal.}, 16(1):1--18, 2004.

\bibitem{MR4628746}
P.~Ganguly, R.~Manna, and S.~Thangavelu.
\newblock An extension problem, trace {H}ardy and {H}ardy's inequalities for the {O}rnstein-{U}hlenbeck operator.
\newblock {\em Anal. PDE}, 16(5):1205--1244, 2023.

\bibitem{MR1843717}
K.~Gr\"ochenig.
\newblock {\em Foundations of time-frequency analysis}.
\newblock Applied and Numerical Harmonic Analysis. Birkh\"auser Boston, Inc., Boston, MA, 2001.

\bibitem{GuoFanWu11}
W.~Guo, D.~Fan, and H.~Wu.
\newblock Nonlinear evolution equations in modulation spaces.
\newblock {\em J. Math. Anal. Appl.}, 384(2):759--772, 2011.

\bibitem{MR743094}
B.~Helffer.
\newblock {\em Th\'eorie spectrale pour des op\'erateurs globalement elliptiques}, volume 112 of {\em Ast\'erisque}.
\newblock Soci\'et\'e{} Math\'ematique de France, Paris, 1984.
\newblock With an English summary.

\bibitem{MR657970}
B.~Helffer and D.~Robert.
\newblock Comportement asymptotique pr\'ecise du spectre d'op\'erateurs globalement elliptiques dans {${\bf R}\sp{n}$}.
\newblock In {\em Goulaouic-{M}eyer-{S}chwartz {S}eminar, 1980--1981}, pages Exp. No. II, 23. \'Ecole Polytech., Palaiseau, 1981.

\bibitem{MR683006}
B.~Helffer and D.~Robert.
\newblock Asymptotique des niveaux d'\'energie pour des hamiltoniens \`a{} un degr\'e{} de libert\'e.
\newblock {\em Duke Math. J.}, 49(4):853--868, 1982.

\bibitem{MR662451}
B.~Helffer and D.~Robert.
\newblock Propri\'et\'es asymptotiques du spectre d'op\'erateurs pseudodiff\'erentiels sur {${\bf R}\sp{n}$}.
\newblock {\em Comm. Partial Differential Equations}, 7(7):795--882, 1982.

\bibitem{MR2304165}
L.~H\"ormander.
\newblock {\em The analysis of linear partial differential operators. {III}}.
\newblock Classics in Mathematics. Springer, Berlin, 2007.
\newblock Pseudo-differential operators, Reprint of the 1994 edition.

\bibitem{MR2885963}
K.~Ishige and Y.~Kabeya.
\newblock {$L^p$} norms of nonnegative {S}chr\"odinger heat semigroup and the large time behavior of hot spots.
\newblock {\em J. Funct. Anal.}, 262(6):2695--2733, 2012.

\bibitem{MR2599384}
N.~Lerner.
\newblock {\em Metrics on the phase space and non-selfadjoint pseudo-differential operators}, volume~3 of {\em Pseudo-Differential Operators. Theory and Applications}.
\newblock Birkh\"auser Verlag, Basel, 2010.

\bibitem{liverts2008approximate}
E.~Liverts and V.~Mandelzweig.
\newblock Approximate analytic solutions of the schr{\"o}dinger equation for the generalized anharmonic oscillator.
\newblock {\em Physica Scripta}, 77(2):025003, 2008.

\bibitem{MR2668420}
F.~Nicola and L.~Rodino.
\newblock {\em Global pseudo-differential calculus on {E}uclidean spaces}, volume~4 of {\em Pseudo-Differential Operators. Theory and Applications}.
\newblock Birkh\"auser Verlag, Basel, 2010.

\bibitem{RodinoWahlberg12}
L.~Rodino and P.~Wahlberg.
\newblock The gabor wave front set.
\newblock {\em Monatsh. Math.}, 173(4):625--655, 2014.

\bibitem{MR3014810}
M.~Ruzhansky, M.~Sugimoto, and B.~Wang.
\newblock Modulation spaces and nonlinear evolution equations.
\newblock In {\em Evolution equations of hyperbolic and {S}chr\"odinger type}, volume 301 of {\em Progr. Math.}, pages 267--283. Birkh\"auser/Springer Basel AG, Basel, 2012.

\bibitem{MR752806}
B.~Simon.
\newblock Erratum: ``{S}chr\"odinger semigroups''.
\newblock {\em Bull. Amer. Math. Soc. (N.S.)}, 11(2):426, 1984.

\bibitem{SugimotoTomita07}
M.~Sugimoto and N.~Tomita.
\newblock The dilation property of modulation spaces and their inclusion relations.
\newblock {\em J. Funct. Anal.}, 248(1):79--106, 2007.

\bibitem{SugimotoTomita09}
M.~Sugimoto and N.~Tomita.
\newblock The dilation property of modulation spaces and their inclusion relations.
\newblock {\em J. Funct. Anal.}, 248(1):79--106, 2007.

\bibitem{Toft04}
J.~Toft.
\newblock Continuity properties for modulation spaces.
\newblock {\em J. Funct. Anal.}, 207(2):399--429, 2004.

\bibitem{MR3636061}
J.~Toft.
\newblock Continuity and compactness for pseudo-differential operators with symbols in quasi-{B}anach spaces or {H}\"ormander classes.
\newblock {\em Anal. Appl. (Singap.)}, 15(3):353--389, 2017.

\bibitem{MR3966424}
W.~Urbina-Romero.
\newblock {\em Gaussian harmonic analysis}.
\newblock Springer Monographs in Mathematics. Springer, Cham, 2019.
\newblock With a foreword by Sundaram Thangavelu.

\bibitem{Wahlberg10}
P.~Wahlberg.
\newblock Boundedness of pseudo-differential operators on modulation spaces.
\newblock {\em Math. Nachr.}, 283(7):1085--1098, 2010.

\bibitem{WangHudzik07}
B.~Wang and H.~Hudzik.
\newblock The global cauchy problem for the nls and nlkg with small rough data.
\newblock {\em J. Differential Equations}, 232(1):36--73, 2007.

\bibitem{WangZhao09}
B.~Wang and L.~Zhao.
\newblock Global well-posedness and scattering for nonlinear schr{\"o}dinger equations in modulation spaces.
\newblock {\em J. Differential Equations}, 246(4):1648--1681, 2009.

\end{thebibliography}

\end{document}